\numberwithin{equation}{section}
\newcolumntype{C}[1]{>{\centering\arraybackslash}p{#1}}
\DeclareMathOperator{\rank}{rank}
\DeclareMathOperator{\Disc}{\Delta}
\DeclareMathOperator{\rad}{rad}
\newcommand{\E}{\mathcal{E}}
\newcommand{\Q}{\mathbb{Q}}
\newcommand{\Z}{\mathbb{Z}}
\newcommand{\F}{\mathbb{F}}
\newcommand{\p}{\mathfrak{p}}
\newcommand{\n}{\mathfrak{n}}
\newcommand{\h}{\hat{h}}
\newcommand{\m}{\mathfrak{m}}
\newcommand{\x}{\mathbf{x}}
\newcommand{\B}{\mathbf{B}}
\renewcommand{\epsilon}{\varepsilon}
\renewcommand{\leq}{\leqslant}
\renewcommand{\geq}{\geqslant}
\newtheorem{theorem}{Theorem}[section]
\newtheorem{lemma}[theorem]{Lemma}
\newtheorem{conjecture}[theorem]{Conjecture}
\theoremstyle{definition}
\newtheorem{definition}[theorem]{Definition}
\begin{document}

\author{Stephanie Chan}
\address{Institute of Science and Technology Austria\\
Am Campus 1\\
3400 Klosterneuburg\\
Austria}
\email{stephanie.chan@ist.ac.at}
\title[Szpiro ratio of elliptic curves with prescribed torsion]{Almost all elliptic curves with prescribed torsion have Szpiro ratio close to the expected value}
\date{\today}

\begin{abstract}
We demonstrate that almost all elliptic curves over $\Q$ with prescribed torsion subgroup, when ordered by naive height, have Szpiro ratio arbitrarily close to the expected value. We also provide upper and lower bounds for the Szpiro ratio that hold for almost all elliptic curves in certain one-parameter families. 
The results are achieved by proving that, given any multivariate polynomial within a general class, the absolute value of the polynomial over an expanding box is typically bounded by a fixed power of its radical.
The proof adapts work of Fouvry--Nair--Tenenbaum, which shows that almost all elliptic curves have Szpiro ratio close to $1$. 
\end{abstract}

\subjclass[2020]{11G05 (11N36, 11R29)}

\maketitle

\setcounter{tocdepth}{1}
\tableofcontents
\section{Introduction}

Every elliptic curve $E$ over $\Q$ admits a unique minimal short Weierstrass equation of the form $y^2=x^3+Ax+B$, where $A$ and $B$ are integers, and $\gcd(A^3, B^2)$ is not divisible by any twelfth powers. We define the naive height of $E$ to be 
\begin{equation}\label{eq:naiveheight}
H(E)\coloneqq\max\left\{4|A|^3,27B^2\right\}.\end{equation}

The Szpiro ratio of $E$ is defined by
\[\sigma(E)\coloneqq\frac{\log |\Disc_{\min}(E)|}{\log N(E)},\]
where $\Disc_{\min}$ is the minimal discriminant of $E$ and $N(E)$ is the conductor of $E$. The prime divisors of $\Disc_{\min}$ and the prime divisors of $N(E)$ coincide, but it is possible that a prime divides $\Disc_{\min}$ with a very large multiplicity. A deep conjecture made by Szpiro asserts that the ratio $\sigma(E)$ should be absolutely bounded.

\begin{conjecture}[Szpiro {\cite[Conjecture~1]{Szpiroconj}}]
For every $\epsilon>0$, there exists some constant $c>0$ depending only on $\epsilon$, such that all elliptic curves $E$ over $\Q$ satisfy
\[|\Disc_{\min}(E)|\leq cN(E)^{6+\epsilon}.\]
\end{conjecture}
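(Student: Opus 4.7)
The statement is Szpiro's conjecture in its full generality. My plan is to route the proof through the Masser--Oesterl\'e abc conjecture, which is known to imply the statement (the converse holding with a weaker exponent). The bridge is the identity $c_4(E)^3 - c_6(E)^2 = 1728\,\Disc_{\min}(E)$: for a semistable $E$, this realizes the triple $(c_4^3,-c_6^2,1728\Disc_{\min})$, after dividing out its gcd, as a coprime additive triple whose radical is comparable to $N(E)$.

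The first step I would carry out is a reduction to semistable $E/\Q$ by a judicious quadratic twist that eliminates the primes $\{2,3\}$ and the primes of additive reduction; their joint contribution to $\Disc_{\min}$ and $N$ is bounded in terms of $\epsilon$ and gets absorbed into $c_\epsilon$. The second step is to apply the abc inequality in the form $\max(|a|,|b|,|c|)\leq C_\delta\,\rad(abc)^{1+\delta}$ to the coprime triple extracted from $c_4^3 - c_6^2 = 1728\Disc_{\min}$. Since $|\Disc_{\min}|^{1/3}\asymp |c_4|$ after the factors already absorbed, cubing the abc estimate and choosing $\delta=\epsilon/6$ converts it into the desired bound $|\Disc_{\min}(E)|\leq c_\epsilon N(E)^{6+\epsilon}$.

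The third step is to prove the abc inequality itself. The main candidate routes are: lower bounds for linear forms in ($p$-adic) logarithms in the manner of Baker and Yu, which give unconditionally only the exponential Stewart--Yu estimate $\log|\Disc_{\min}(E)|\ll N(E)^{1/3}(\log N(E))^3$; Arakelov-theoretic height inequalities on compactified moduli spaces in the spirit of Vojta and Zhang; the modular-form approach of Murty--Pasten; and Mochizuki's inter-universal Teichm\"uller programme.

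The hard part, and the entire reason the conjecture has remained open since 1981, is this third step: extracting a polynomial--in--radical bound where the current unconditional results are exponential. No analytic, Diophantine, or automorphic method has yet produced the required cancellation, and the gap is enormous. Absent a breakthrough on abc, the strategy above yields only the partial bounds already in the literature; accordingly the remainder of the paper does not attempt the full conjecture but instead establishes sharp almost-all versions, showing that $\sigma(E)$ concentrates near its expected value in natural one-parameter and torsion-constrained families ordered by naive height.
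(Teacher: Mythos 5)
There is no proof to compare against here: the statement you were given is Szpiro's conjecture itself, which the paper quotes verbatim from Szpiro as a \emph{conjecture} and never claims to prove. The paper's actual results (Theorems~\ref{theorem:Szpiroirr}--\ref{theorem:intptbd}) are statistical, ``almost all'' statements about the Szpiro ratio in families, proved by sieve methods following Fouvry--Nair--Tenenbaum; they say nothing about every individual curve, which is what the conjecture demands.

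Your proposal, as you yourself concede in the third and fourth steps, is therefore not a proof. The reduction you sketch (pass to a convenient twist, feed the identity $c_4^3-c_6^2=1728\,\Disc_{\min}$ into an abc-type inequality with exponent $1+\delta$, and absorb the primes $2$, $3$ and the additive primes into the constant) is the standard folklore implication ``abc $\Rightarrow$ Szpiro with exponent $6+\epsilon$,'' and even there the bookkeeping needs more care than you give it: the triple need not be coprime, the comparison of $\rad(c_4c_6\Disc_{\min})$ with $N(E)$ uses semistability in an essential way, and one must check that the twisting/normalisation only costs a bounded factor. But the decisive gap is the one you name: the abc conjecture is open, and the best unconditional bounds (Stewart--Yu via linear forms in logarithms) give only $\log|\Disc_{\min}(E)|\ll N(E)^{1/3}(\log N(E))^3$, which is exponentially weaker than the polynomial bound required. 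So the proposal reduces one open problem to another and stops; nothing in it, nor in the paper, establishes the stated conjecture.
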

Szpiro’s conjecture is closely related to the $abc$ conjecture~\cite{OesABC}, see for example~\cite[Chapter VIII.11]{Silverman}.

We are interested in statistical versions of Szpiro's conjecture in certain families of elliptic curves. 
Fouvry, Nair, and Tenenbaum~\cite[Théorème~1 and Théorème~2]{FNT} showed that for any given $\epsilon>0$, the Szpiro ratio of almost all $\Q$-isomorphism classes of elliptic curves, whether ordered by naive height or by absolute discriminant, is at most $1+\epsilon$. 
 Note that this however does not imply the same is true for thin families, and indeed, as we will see in Theorem~\ref{theorem:szpiroell}, there are natural families with typical Szpiro ratio much larger than $1$.
 Wong \cite[Theorem~3(b)]{WongS} obtained a similar result for Frey curves $y^2=x(x+a)(x+b)$, showing that the Szpiro ratio is at most $2+\epsilon$ outside of an exceptional set which has density $0$ in this family when ordered by naive height.

In this paper, we study certain families of elliptic curves ordered by naive height $H(E)$.
We denote the radical of any integer $n$ by
\[\rad(n)\coloneqq \prod_{p\mid n} p.\]
Since $N(E)$ is bounded below by $\rad(\Disc_{\min}(E))$, to show that $|\Disc_{\min}(E)|\leq N(E)^{\beta}$ for some $\beta>1$, it suffices to show that 
 $|\Disc_{\min}(E)|\leq \rad(\Disc_{\min}(E))^{\beta}$. We will look at families of elliptic curves with discriminants expressible in terms of the values of some bivariate polynomial.
 A key result is Theorem~\ref{theorem:Szpiroirr}, which shows that, for an irreducible polynomial $F\in\Z[\x]$ taking a certain form, the value of $|F(\x)|$ is almost always bounded polynomially in terms of its radical $\rad(F(\x))$ in an expanding box.
 
\begin{theorem}\label{theorem:Szpiroirr}
Suppose $F\in\Z[\x]$ is a polynomial in $k\geq 1$ variables $x_1,\dots,x_k$, is irreducible in $\Q[\x]$, and has total degree $d\geq 1$. Let $B_1,\dots ,B_k\geq 3$.
Assume that there exists some $j$ such that the $x_j^{d}$-coefficient of $F(\x)$ is non-zero and 
\begin{equation}\label{eq:lopbox}
 B_1^{e_1}\dots B_k^{e_k}\ll (\max B_i)^{d-1}B_j
\end{equation}
for all integer $k$-tuple $(e_1,\dots,e_k)$ such that the $x_1^{e_1}\dots x_k^{e_k}$-coefficient of $F$ is non-zero.
 Fix a constant $\beta>\max\left\{1,d-2\right\}$. Then whenever 
 \begin{equation}\label{eq:ABlog}
\log \max B_i\ll\log\min B_i
\end{equation}
 holds, we have
\[
 \frac{1}{B_1\dots B_k}\#\left\{\x\in\Z^k: |x_i|\leq B_i,\ |F(\x)|\geq\rad(F(\x))^{\beta}\right\}
 \ll \frac{(\log\log B_1)^{\max\left\{1,d-1\right\}}}{(\log B_1)^{\delta}},
\]
where
\[\delta=\begin{cases}
\hfil 1&\text{if }d\leq 2,\\
1-\frac{2}{d+1}&\text{if }d\geq 3,
\end{cases}
\]
and the implied constant depends at most on $\beta$, $F$ and the implied constants in~\eqref{eq:lopbox} and~\eqref{eq:ABlog}.
\end{theorem}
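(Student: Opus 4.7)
I would follow the Fouvry--Nair--Tenenbaum strategy of~\cite{FNT} (which corresponds to the case $k=2$, $d=3$, $F(A,B)=4A^3+27B^2$), adapted to general multivariate $F$ and the anisotropic box $|x_i|\leq B_i$. The starting point is the rewriting
\[
|F(\x)|\geq \rad(F(\x))^{\beta}\quad\Longleftrightarrow\quad \frac{|F(\x)|}{\rad(F(\x))}\;\geq\; \rad(F(\x))^{\beta-1},
\]
so that I need to count $\x$ for which the power-excess $|F(\x)|/\rad(F(\x))=\prod_{p\mid F(\x)}p^{v_p(F(\x))-1}$ dominates a positive power of $\rad(F(\x))$. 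Since $\beta>1$ this forces some prime $p$ to divide $F(\x)$ to a large power, so every exceptional $\x$ contributes an unusually large squareful divisor of $F(\x)$.

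\textbf{Reduction to mean values of multiplicative functions.} I would fix a threshold $T$ (eventually chosen as a small power of $\log B_1$) and split primes into $p\leq T$ and $p>T$. For the large primes, a union bound gives
\[
\#\{\x:\exists\, p>T \text{ with } p^2\mid F(\x)\}\;\ll\; B_1\cdots B_k\sum_{p>T}\frac{\rho(p^2)}{p^{2k}}\;+\;\text{(lattice-point error)},
\]
where $\rho(m)=\#\{\x\bmod m:F(\x)\equiv 0\bmod m\}$. Irreducibility of $F$ gives $\rho(p^2)\ll_{F} p^{2k-2}$ for all $p$ outside a finite set, via Hensel lifting on the smooth locus of the hypersurface $F=0$; this makes the main term $\ll (B_1\cdots B_k)/T$. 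For the small primes, a Rankin-type argument replaces the indicator of the exceptional set by a bounded multiplicative function $g(F(\x))$ supported on $T$-smooth integers, after which a Nair--Tenenbaum-type inequality bounds the mean value $\sum_{\x}g(F(\x))$ by $B_1\cdots B_k$ times an Euler product over $p\leq T$. That product yields a saving of a small power of $\log T$, reflecting the close-to-normal small-prime factorization of a typical $F(\x)$.

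\textbf{Box geometry and the main obstacle.} Combining the two contributions and optimising $T$ gives the target bound $(\log\log B_1)^{\max\{1,d-1\}}/(\log B_1)^{\delta}$. Conditions~\eqref{eq:lopbox} and~\eqref{eq:ABlog} enter precisely here: the first, combined with the non-vanishing of the $x_j^{d}$-coefficient, singles out $x_j$ as the dominant variable inside the box, so that after fixing the remaining $x_i$'s the mod-$m$ counting reduces to a one-variable root-counting problem for a polynomial of degree $d$, to which classical estimates apply uniformly; the second ensures that the logarithmic savings are of the same order across all the $B_i$'s, so that the final rate can be phrased in terms of $\log B_1$ alone. The main obstacle I anticipate is obtaining the sharp exponent $\delta=1-2/(d+1)$ for $d\geq 3$: a crude single-threshold split at $T$ produces only a weaker saving, and recovering the correct rate appears to require a finer dyadic decomposition over primes together with uniform control of $\rho(p^e)$ for all $e\geq 2$ simultaneously, mirroring the way the exponent $1/2=1-2/4$ emerges from the analysis of~\cite{FNT} in the case $d=3$.
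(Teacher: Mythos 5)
Your high-level framing (Rankin trick for the small primes, union bound for large prime squares) is the natural first attempt, and the Rankin/Nair--Tenenbaum piece does match case~\ref{C2} of the paper's argument (Lemma~\ref{lemma:smoothpart}). But there is a genuine gap in the large-prime analysis, and it is precisely where the paper's main technical novelty lies.

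Your estimate
\[
\#\{\x:\exists\,p>T,\ p^{2}\mid F(\x)\}\ \ll\ B_1\cdots B_k\sum_{p>T}\frac{\rho(p^{2})}{p^{2k}}+\text{(lattice error)}
\]
is only valid, as a genuine main-term bound, in the range $p^{2}\leq\min B_i$; once $p^{2}>\min B_i$ the ``lattice-point error'' you set aside becomes the whole problem, and it does not average to $O(V/T)$. Since $|F(\x)|$ can be as large as $(\max B_i)^{d}$, the prime $p$ with $p^{2}\mid F(\x)$ can be as large as $(\max B_i)^{d/2}$, far beyond the scale of the box. The paper therefore does not use a single threshold $T$: it splits into four cases (\ref{C1}--\ref{C:largeprimes}), discards small $|F(\x)|$ first (Lemma~\ref{lemma:C1}), handles medium primes $t<p\leq T_1$ with the Browning--Heath-Brown geometric sieve plus a Hensel count (Lemma~\ref{lemma:C3}), and crucially observes that if $|F(\x)|\geq\rad(F(\x))^{\beta}$ with $\beta>d-2$, the smooth part is small, and $|F(\x)|$ is large, then necessarily $p^{d-1}\mid F(\x)$ for some $p>T_1$ with $T_1$ essentially as large as $\max B_i$.

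That last deduction opens the door to the decisive step your proposal does not reach: writing $F(\x)=m z^{d-1}$ with $m$ being $(d-1)$-th power free and small, fixing $x_2,\dots,x_k$, and applying Hooley's larger sieve (Lemma~\ref{lemma:largersieve}) to count $x_1$ with $F_{\mathbf b}(x_1)=m z^{d-1}$. This gives the saving $(B_1/m)^{1/\tau(d-1)}$ uniformly in $m$, which, combined with the large sieve (Lemma~\ref{lemma:largesieve}) for $\sum_{\mathbf b}\varrho_{\mathbf b}(m)$ and Tenenbaum's smooth number estimate (Lemma~\ref{lemma:T}), produces the exponent $\delta$. Your closing remark that ``a finer dyadic decomposition over primes together with uniform control of $\rho(p^{e})$'' is needed gestures in the right direction but does not name this mechanism; uniform bounds on $\rho(p^{e})$ alone cannot control primes $p$ of size $\sim B_1$, because the set of residues mod $p^{2}$ has positive codimension only in a box of side $\geq p^{2}$. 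Without the larger-sieve step (or an equivalent substitute), the proposed argument cannot recover $\delta=1-2/(d+1)$ for $d\geq 3$, and for $d\leq 2$ it would not even dispose of the ``lattice error'' range.

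Two further minor points: you do not isolate the contribution of small values $|F(\x)|\leq B_1/\log B_1$, but this step is needed to make the later implications ($n_t<T$, $v_p(n)\leq d-2$ for all $p>t$ forces a contradiction) go through; and your appeal to condition~\eqref{eq:lopbox} is used correctly in spirit but should be recognised as the hypothesis~\eqref{eq:loglb} that makes the larger-sieve variable-selection work, rather than merely a convenience for one-variable root counting.
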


Notice that trivially $|F(\x)|\geq\rad(F(\x))$ for any $\x\in\Z^k$, so the lower bound $\max\left\{1,d-2\right\}$ for $\beta$ in Theorem~\ref{theorem:Szpiroirr} is sharp when $d\leq 3$.
When $k=2$, as long as one of the $x_1^d$- and $x_2^d$-coefficients of $F$ is non-zero, there exists a choice of $j$ such that~\eqref{eq:lopbox} holds.

We prove Theorem~\ref{theorem:Szpiroirr} in Section~\ref{section:maintech} by generalising the technique of Fouvry--Nair--Tenenbaum~\cite{FNT}. If we apply Theorem~\ref{theorem:Szpiroirr} to $F(x,y)=4x^3+27y^2$, we recover~\cite[Théorème~2]{FNT} albeit with a more restrictive condition~\eqref{eq:ABlog} as stated. 

In Section~\ref{section:oneparfam}, we use Theorem~\ref{theorem:Szpiroirr} to deduce the following statistical result on the Szpiro ratio of elliptic curves within certain one-parameter families, ordered by naive height.
\begin{theorem}\label{theorem:szpiroell}
Let $f,g\in\Q[t]$ be coprime polynomials that satisfy
 \begin{equation}\label{eq:assumefg}
 \max\left\{\frac{1}{4}\deg f,\ \frac{1}{6}\deg g\right\}\in\left\{1,\ 2,\ 3,\ \dots \right\}\cup \left\{\frac{1}{2},\ \frac{1}{3},\ \frac{1}{4},\ \dots \right\}.\end{equation}
Let $\mathcal{E}=(\mathcal{E}_t)_{t\in\Q}$ be the family of elliptic curves defined by
\begin{equation}\label{eq:paraE}
\mathcal{E}_t:y^2=x^3+f(t)x+g(t),\ t\in\Q.\end{equation}
Fix constants $\beta_1$ and $\beta_2$ such that $\beta_1<\lambda_{1}(f,g)\leq \kappa_{1}(f,g)<\beta_2$, where $\lambda_{1}$ and $\kappa_{1}$ are defined in~\eqref{eq:lambda} and~\eqref{eq:kappa}.
Then for all $H\geq 3$ we have
\[ \frac{\#\left\{E/\Q: E\in\mathcal{E},\ H(E)\leq H,\ \sigma(E)\notin (\beta_1,\beta_2)\right\}}{\#\left\{E/\Q: E\in\mathcal{E},\ H(E)\leq H\right\}}\ll \frac{(\log\log H)^2}{(\log H)^{\frac{1}{2}}},\]
where the implied constant depends at most on $\beta_1$, $\beta_2$, $f$ and $g$.
\end{theorem}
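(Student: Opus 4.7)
The plan is to parametrize each rational $t$ as $u/v$ with $\gcd(u, v) = 1$ and $v \geq 1$, convert the statement into a counting problem for coprime pairs $(u, v)$ in an expanding box, and then apply Theorem~\ref{theorem:Szpiroirr} to each irreducible factor of the discriminant polynomial.

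First I would set $m = \max\{\deg f/4, \deg g/6\}$ and use hypothesis~\eqref{eq:assumefg} to form integral homogenizations $A(u, v), B(u, v)$ of degrees $4m$ and $6m$ such that $y^2 = x^3 + A(u, v) x + B(u, v)$ is an integer short Weierstrass model of $\mathcal{E}_{u/v}$, minimal away from a bounded set of primes (handled by $O(1)$ error terms throughout). This gives $H(\mathcal{E}_{u/v}) \asymp \max\{|u|, v\}^{12m}$, so that the bound $H(E) \leq H$ corresponds to a box $\max\{|u|, v\} \leq B$ of side length $B \asymp H^{1/(12m)}$ containing $\asymp B^2$ coprime pairs.

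Next, factor the discriminant polynomial
\[D(u, v) = -16\bigl(4A(u, v)^3 + 27 B(u, v)^2\bigr) = c\prod_i P_i(u, v)^{e_i}\]
into distinct irreducibles $P_i$ of degree $d_i$ over $\Q[u, v]$. The quasi-homogeneity coming from~\eqref{eq:assumefg} forces each $P_i$ to have either $u^{d_i}$ or $v^{d_i}$ as a monomial, so hypothesis~\eqref{eq:lopbox} holds on the square box (and~\eqref{eq:ABlog} is trivially satisfied). Applying Theorem~\ref{theorem:Szpiroirr} to each $P_i$ with $\beta$ marginally above $\max\{1, d_i - 2\}$ yields
\[|P_i(u, v)| \leq \rad(P_i(u, v))^{\max\{1, d_i - 2\} + \varepsilon}\]
outside a set of density $O((\log\log B)^{\max\{1, d_i - 1\}} / (\log B)^{\delta_i})$ with $\delta_i \geq 1/2$. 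Combined with the near-generic lower bound $|P_i(u, v)| \gg B^{d_i - \varepsilon}$ (from the non-vanishing of $P_i$), these control the ratio $\log|D(u, v)|/\log\rad(D(u, v))$ from both sides; the extremal values of this ratio are precisely what the quantities $\lambda_1(f, g)$ and $\kappa_1(f, g)$ of~\eqref{eq:lambda} and~\eqref{eq:kappa} are designed to encode.

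Finally, to translate between $|D|, \rad(D)$ and $|\Disc_{\min}|, N$, I would use the inequalities $\rad(\Disc_{\min}(E)) \leq N(E) \leq C\rad(\Disc_{\min}(E))^2$ together with the observation that additive reduction for $\mathcal{E}_{u/v}$ only occurs at primes dividing $\gcd(D(u, v), c_4(u, v))$, which is bounded for almost all coprime $(u, v)$ via a resultant argument (since $D$ and $c_4$ are coprime in $\Q[u, v]$). This forces $N(\mathcal{E}_{u/v}) \asymp \rad(\Disc_{\min}(\mathcal{E}_{u/v})) \asymp \prod_i \rad(P_i(u, v))$ outside a sparse set, and a union bound over the factors and exceptional sets produces the stated density $(\log\log H)^2 / (\log H)^{1/2}$, the exponent $1/2$ coming from the worst-case $\delta_i = 1/2$ at $d_i = 3$. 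The main obstacle is the careful bookkeeping needed to match the combined contributions of the $P_i$ (with their multiplicities $e_i$) to the explicit forms of $\lambda_1(f, g)$ and $\kappa_1(f, g)$, together with establishing near-pairwise-coprimality of the values $P_i(u, v)$ so that the radical distributes across the factorization.
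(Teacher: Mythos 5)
Your overall strategy---parametrize, factor the discriminant polynomial into irreducibles, apply Theorem~\ref{theorem:Szpiroirr} to each factor, control pairwise gcds by a geometric sieve, and translate between $(|\Disc_{\min}|,N)$ and $(|D|,\rad(D))$---matches the paper's. However, there is a genuine gap in the parametrization that undermines the argument whenever $\max\left\{\tfrac14\deg f,\tfrac16\deg g\right\}$ is a unit fraction $\tfrac1m$ with $m\geq2$ (which hypothesis~\eqref{eq:assumefg} explicitly allows, and which is needed for the torsion applications, e.g.\ $C_3$ and $C_4$). Your choice $m=\max\{\deg f/4,\deg g/6\}$ is then $<1$, so the claimed ``integral homogenizations of degrees $4m$ and $6m$'' have non-integer degrees, and more seriously the change of variables that would make $y^2=x^3+A(u,v)x+B(u,v)$ $\Q$-isomorphic to $\mathcal{E}_{u/v}$ requires a rational scaling $u_0$ with $u_0^4=v^{4m}$, i.e.\ $u_0=v^{1/m_{\mathrm{paper}}}$. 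For general coprime $(u,v)$ with $v$ not a perfect $m_{\mathrm{paper}}$-th power, this is irrational, so the curve you are bounding the Szpiro ratio of is a \emph{twist} of $\mathcal{E}_{u/v}$, not $\mathcal{E}_{u/v}$ itself. Since the Szpiro ratio is not twist-invariant, the whole computation is for the wrong family.

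Relatedly, the box count $\asymp B^2$ with $B\asymp H^{1/(12m)}$ does not match the true number of minimal models of naive height at most $H$. Writing $n/m_{\mathrm{paper}}=\max\{\deg f/4,\deg g/6\}$, your count is $\asymp H^{m_{\mathrm{paper}}/(6n)}$, whereas the correct order (Theorem~\ref{theorem:HS}, due to Harron--Snowden) is $H^{(m_{\mathrm{paper}}+1)/(12n)}$; e.g.\ $H^{1/3}$ vs.\ $H^{1/4}$ when $m_{\mathrm{paper}}=2$. The paper handles this by parametrizing $t=a/b^{m}$ with $\gcd(a,b^{m})$ not divisible by any $m$-th power, working with the weighted-homogeneous form $D(a,b^{m})$ on the non-square box $|a|\ll H^{m/(12n)}$, $|b|\ll H^{1/(12n)}$, and invoking Lemma~\ref{lemma:ratcon} (Harron--Snowden) to pin down the allowable scaling $u_0=qcb^{n}$ and hence the genuine minimal model. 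This necessitates the weighted variant Theorem~\ref{theorem:weightedhom} rather than a direct application of Theorem~\ref{theorem:Szpiroirr} on a square box. Your proposal is essentially correct for the case where $\max$ is a positive integer ($m=1$), but the other half of hypothesis~\eqref{eq:assumefg} requires the weighted parametrization and the Harron--Snowden input, which your outline does not supply.
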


The constants $\lambda_{1}(f,g)$ and $\kappa_{1}(f,g)$ come from studying the factorisation of the discriminant of $\mathcal{E}_t$, viewed as a polynomial in $t$. As an example, suppose that $f,g\in \Z[t]$ are coprime in $\Q[t]$, and $n=\max\left\{\frac{1}{4}\deg f,\ \frac{1}{6}\deg g\right\}$ is a positive integer. Write $t=a/b$, where $a$ and $b$ are coprime integers. Then $\mathcal{E}_t$ is $\Q$-isomorphic to
\[y^2=x^3+F(a,b)x+G(a,b),\]
where $F(a,b)\coloneqq b^{4n}f(a/b)$ and $G(a,b)\coloneqq b^{6n}g(a/b)$ are integral binary forms that are coprime because of our choice of $n$. Although this model is not necessarily minimal, it turns out that it is not very far from being so. The discriminant with respect to this model is a constant multiple of $D(a,b)\coloneqq b^{12n}d(a/b)$, which has degree $12n$. 
Suppose that $D_i$ are the distinct irreducible factors of $D$ in $\Q[x,y]$. We expect $D_i(a,b)$ to be close to being squarefree. To obtain the value of $\lambda_{1}(f,g)$, we estimate $|\Disc_{\min}(E)|$ by $|D(a,b)|$ and $N(E)$ by $\prod D_i(a,b)$. As $a$ and $b$ vary in an expanding box of equal side lengths, $\frac{\log |D_i(a,b)|}{\log |D(a,b)|}$ is usually close to $\frac{\deg D_i}{\deg D}$. This suggests that $\lambda_{1}(f,g)=\frac{\deg D}{\sum_i \deg D_i}$ is the expected value of the Szpiro ratio in this family.
When the degree of $D_i$ is greater than $3$, Theorem~\ref{theorem:Szpiroirr} only provides an upper bound $|D_i(a,b)|\leq (\rad(D_i(a,b))^{\deg D_i-2+\epsilon}$ for almost all $(a,b)$, so $\kappa_{1}(f,g)$ is defined correspondingly with the extra weight $\max\{1,\deg D_i-2\}$ in mind.

Harron--Snowden~\cite{HarronSnowden} (see also Cullinan--Kenney--Voight~\cite{CKV}) found the order of magnitude for the number of elliptic curves over $\Q$ of bounded height with any given torsion subgroup $G$. In their work, they utilised the fact that for a majority of permissible groups $G$, there exists $f,g\in\Q[t]$ such that any elliptic curve $E$ with torsion subgroup $E(\Q)_{\mathrm{tors}}\cong G$ is $\Q$-isomophic to some $\mathcal{E}_t$ of the form~\eqref{eq:paraE}. This provides us with the setup for dealing with families of elliptic curves with prescribed torsion.

As a consequence of Theorem~\ref{theorem:szpiroell}, we show that almost all elliptic curves with any prescribed torsion subgroup $G$ have Szpiro ratio arbitrarily close to the expected value $\beta_G$, when ordered by naive height.
\begin{theorem}\label{theorem:SzpiroTorsheight}
Let $\epsilon>0$.
Let $G$ be a possible torsion subgroup for elliptic curves over $\Q$.
Define\[\beta_G\coloneqq\begin{cases}
\hfil 1&\text{if }G= C_1,\\
\hfil\frac{3}{2}&\text{if }G= C_2,\\
\hfil 2&\text{if }G= C_3,\text{ or }C_2\times C_2,\\
\hfil\frac{12}{5}&\text{if }G= C_4,\\
\hfil 3&\text{if }G= C_5,\ C_6,\text{ or } C_2\times C_4,\\
\hfil 4&\text{if }G= C_7,\ C_8,\text{ or } C_2\times C_6,\\
\hfil\frac{9}{2}&\text{if }G= C_9,\text{ or }C_{10},\\
\hfil\frac{24}{5}&\text{if }G= C_{12},\text{ or }C_2\times C_8,
\end{cases}\]
where $C_n$ denotes the cyclic group of order $n$.
Then for all $H\geq 3$ we have
\[
 \frac{\#\left\{E/\Q: E(\Q)_{\mathrm{tors}}\cong G,\ H(E)\leq H,\ \sigma(E)\notin (\beta_G-\epsilon,\beta_G+\epsilon)\right\}}{\#\left\{E/\Q: E(\Q)_{\mathrm{tors}}\cong G,\ H(E)\leq H\right\}}\ll \frac{(\log\log H)^{2}}{(\log H)^{\frac{1}{2}}},
\]
where the implied constant depends at most on $\epsilon$.
\end{theorem}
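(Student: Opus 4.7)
The plan is to reduce Theorem~\ref{theorem:SzpiroTorsheight} to Theorem~\ref{theorem:szpiroell} by invoking, for each admissible $G$ other than $C_1$, the classical Kubert-type parametrisation of the universal elliptic curve with torsion $G$. The tables compiled in Harron--Snowden and Cullinan--Kenney--Voight (originating with Kubert and Kenku--Momose) produce coprime polynomials $f_G, g_G\in\Q[t]$ such that every $E/\Q$ with $E(\Q)_{\mathrm{tors}}\cong G$ is $\Q$-isomorphic to $\mathcal{E}_t$ of the form~\eqref{eq:paraE}. The degrees are arranged so that $n_G\coloneqq\max\{\deg f_G/4,\ \deg g_G/6\}$ lies in the admissible set~\eqref{eq:assumefg}, placing each family in the scope of Theorem~\ref{theorem:szpiroell}.

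The trivial case $G=C_1$ is handled separately. By the counts of Harron--Snowden, the number of elliptic curves of naive height at most $H$ with any fixed non-trivial torsion subgroup is $O(H^{\alpha})$ for some $\alpha<5/6$, while the total count is $\asymp H^{5/6}$. Hence the denominator of the ratio in the theorem is of order $H^{5/6}$, and the numerator is bounded by the total count of $E$ with $\sigma(E)\notin(1-\epsilon,1+\epsilon)$, which Fouvry--Nair--Tenenbaum's Th\'eor\`eme~2 estimates by $O\bigl(H^{5/6}(\log\log H)(\log H)^{-1/2}\bigr)$; this is amply within the claimed error.

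For each non-trivial $G$, applying Theorem~\ref{theorem:szpiroell} to $(f_G, g_G)$ reduces the matter to a finite computation. Factor the discriminant $d_G(t)=-16\bigl(4f_G(t)^3+27g_G(t)^2\bigr)$ into distinct $\Q$-irreducibles, $d_G=c_G\prod_i d_{G,i}^{m_i}$, and verify that both $\lambda_1(f_G,g_G)=\deg d_G/\sum_i\deg d_{G,i}$ and $\kappa_1(f_G,g_G)$ equal the listed $\beta_G$. The first equality should fall directly out of the Kubert normal form. For the second, per the discussion following Theorem~\ref{theorem:Szpiroirr}, each $d_{G,i}$ must have degree at most $3$ so that the extra weight $\max\{1,\deg d_{G,i}-2\}$ is identically $1$ and $\kappa_1=\lambda_1$. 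A brief bookkeeping argument then matches the underlying count: writing $t=a/b$ in lowest terms, $(a,b)$ ranges over primitive pairs in an expanding square, $H(\mathcal{E}_t)$ is comparable to $\max(|a|,|b|)^{12n_G}$ up to bounded factors, and the generically finite-to-one map on isomorphism classes affects numerator and denominator by the same bounded multiplicity.

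The main obstacle is the case-by-case verification that every $\Q$-irreducible factor of $d_G(t)$ has degree at most $3$. This is most delicate for the groups with largest $\beta_G$, namely $C_{12}$ and $C_2\times C_8$, where the Kubert model is at its most intricate and the discriminant polynomial has largest degree; should any factor of degree $\geq 4$ appear, $\kappa_1$ would strictly exceed $\lambda_1$ and $\beta_G$ would not be attainable for arbitrarily small $\epsilon$ by Theorem~\ref{theorem:szpiroell} alone. Granted this combinatorial check, applying Theorem~\ref{theorem:szpiroell} to each of the fourteen non-trivial $G$ with $\beta_1=\beta_G-\epsilon$ and $\beta_2=\beta_G+\epsilon$ yields exactly the claimed bound, uniformly in $G$.
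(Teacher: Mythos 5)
Your reduction to Theorem~\ref{theorem:szpiroell} is correct in spirit for the groups of large torsion, but it cannot be carried out uniformly as you propose: the premise that for every non-trivial $G$ there exist coprime $f_G,g_G\in\Q[t]$ so that every $E$ with $E(\Q)_{\mathrm{tors}}\cong G$ is $\Q$-isomorphic to $\mathcal{E}_t$ is false for $G=C_2$ and $G=C_2\times C_2$. For these two groups the modular parametrisation is only defined up to quadratic twist, so the natural family is two-dimensional. For $G=C_2\times C_2$, Harron--Snowden give $A=u^2f(t)$, $B=u^3g(t)$ with $u$ ranging over squarefree integers; one must sum over the twist parameter $u$, which is handled in the paper by the $\nu=2$ case of Theorem~\ref{theorem:szpiro1}, \emph{not} by Theorem~\ref{theorem:szpiroell} (which corresponds only to $\nu=1$). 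For $G=C_2$, the parametrisation is $A=a$, $B=b^3+ab$ with $(a,b)$ ranging over a genuinely two-parameter box, and the discriminant factors as $-16(a+3b^2)^2(4a+3b^2)$; the paper carries out a separate direct argument via Theorem~\ref{theorem:main2}, Lemma~\ref{lemma:rebinaryformarea}, and Lemma~\ref{lemma:hompolygcd}, together with the Harron--Snowden count $\#\{E: E(\Q)_{\mathrm{tors}}\cong C_2,\ H(E)\leq H\}\gg H^{1/2}$. Your proposal does not contain either of these arguments, and without them the cases $C_2$ and $C_2\times C_2$ are unproved.

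A smaller gap: for $G=C_3$ the one-parameter model omits an exceptional subfamily $y^2=x^3+b^2$ of size $O(H^{1/4})$, which must be dispatched separately (as the paper does); your proposal does not mention it. Otherwise, your handling of $G=C_1$ via Fouvry--Nair--Tenenbaum matches the paper's, and for the remaining groups your plan of applying Theorem~\ref{theorem:szpiroell} with the Kubert/Barrios polynomials and checking that every irreducible factor of $d_G(t)$ has degree at most $3$ (so that $\kappa_1=\lambda_1=\beta_G$) is exactly what the paper does; the factorisations in the paper's Table~\ref{ta:gamT} confirm this degree bound, including for $C_{12}$ and $C_2\times C_8$.
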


Since almost all elliptic curves have trivial torsion over $\Q$, Theorem~\ref{theorem:SzpiroTorsheight} in the case where $G$ is trivial, can be seen as a reformulation of \cite[Théorème~2]{FNT} of Fouvry--Nair--Tenenbaum. One can also recover Wong's result on the Szpiro ratio of Frey curves \cite[Theorem~3(b)]{WongS} from the case where $G=C_2\times C_2$.
For any possible $G$, every irreducible factor of the discriminant polynomial arising from the parameterisation has degree at most $3$. Consequently, we can establish, up to $\epsilon$, matching upper and lower bounds for the Szpiro ratio for almost all curves with given torsion subgroup $G$. In the cases where we apply Theorem~\ref{theorem:szpiroell}, the value of $\beta_G$ is $\lambda_1(f,g)=\kappa_1(f,g)$.
As a remark, we observe that our values of $\beta_G$ in Theorem~\ref{theorem:SzpiroTorsheight}
 agree with the lower bounds obtained in~\cite{BarriosSzp} for the modified Szpiro ratio $\frac{\log H(E)+O(1)}{\log N(E)}$ .

Lang \cite[p.\,92]{LangDioph} conjectured that there exists an absolute constant $c$ such that for any elliptic curve $E/\Q$ and for any non-torsion point $P\in E(\Q)$, the canonical height of $P$ is bounded below by
\begin{equation}\label{eq:Lang}
\h(P)\geq c\log |\Disc_{\min}(E)|.
\end{equation}
Hindry--Silverman~\cite{HS} showed that Szpiro’s conjecture implies Lang's conjecture. Quantitatively, their result implies that $c$ in~\eqref{eq:Lang} can be chosen to only depend on the Szpiro ratio. 
Combining our results with~\cite[Theorem~0.3]{HS}, we see that Lang's conjecture holds for almost all elliptic curves in the families covered by Theorem~\ref{theorem:szpiroell} and Theorem~\ref{theorem:SzpiroTorsheight}.

\begin{theorem}\label{theorem:langstat}
\begin{enumerate}
\item Let $f,g\in\Q[t]$ be coprime polynomials that satisfy~\eqref{eq:assumefg}.
Let $\mathcal{E}=(\mathcal{E}_t)_{t\in\Q}$ be as defined in~\eqref{eq:paraE}.
There exists a constant $c$ depending only on $\kappa_{1}(f,g)$ defined in~\eqref{eq:kappa}, such that for all $H\geq 3$, we have
\[ \frac{\#\left\{E/\Q: \begin{array}{l}
E\in\mathcal{E},\ H(E)\leq H,\\
\eqref{eq:Lang}\text{ fails for some }P\in E(\Q)\setminus E(\Q)_{\mathrm{tors}}
 \end{array}\right\}}{\#\left\{E/\Q: E\in\mathcal{E},\ H(E)\leq H\right\}}\ll \frac{(\log\log H)^2}{(\log H)^{\frac{1}{2}}},\]
where the implied constant depends at most on $f$ and $g$.
\item Let $G$ be a possible torsion subgroup for elliptic curves over $\Q$. \label{tors}
There exists an absolute constant $c$ such that for all $H\geq 3$, we have
\[
 \frac{\#\left\{E/\Q: \begin{array}{l}E(\Q)_{\mathrm{tors}}\cong G,\ H(E)\leq H,\\
 \eqref{eq:Lang}\text{ fails for some }P\in E(\Q)\setminus E(\Q)_{\mathrm{tors}}\end{array}\right\}}{\#\left\{E/\Q: E(\Q)_{\mathrm{tors}}\cong G,\ H(E)\leq H\right\}}\ll \frac{(\log\log H)^{2}}{(\log H)^{\frac{1}{2}}},
\]
and the implied constant is absolute.
\end{enumerate}
\end{theorem}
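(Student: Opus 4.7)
The plan is to combine Theorems~\ref{theorem:szpiroell} and~\ref{theorem:SzpiroTorsheight} with the Hindry--Silverman bound~\cite[Theorem~0.3]{HS}, exploiting the key consequence (already highlighted in the introduction) that the constant $c$ in Lang's conjecture~\eqref{eq:Lang} may be chosen to depend only on an upper bound for the Szpiro ratio $\sigma(E)$. Explicitly, for every $\beta>0$ there is some $c(\beta)>0$ such that if $\sigma(E)\leq\beta$, then~\eqref{eq:Lang} holds for all non-torsion $P\in E(\Q)$ with $c$ replaced by $c(\beta)$. The task therefore reduces to showing that, outside the already controlled exceptional sets, the Szpiro ratio of the curves we are counting is bounded above by a permissible constant.

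For part~(1), I would fix once and for all a pair $\beta_1<\lambda_{1}(f,g)\leq\kappa_{1}(f,g)<\beta_2$ depending only on $\kappa_{1}(f,g)$, for instance by taking $\beta_2\coloneqq\kappa_{1}(f,g)+1$ (the value of $\beta_1$ is immaterial). Set $c\coloneqq c(\beta_2)$, which then depends only on $\kappa_{1}(f,g)$ as required. For any $E\in\mathcal{E}$, if~\eqref{eq:Lang} with this $c$ fails for some non-torsion $P$, then the contrapositive of Hindry--Silverman forces $\sigma(E)>\beta_2$, so $E$ lies in the exceptional set $\{\sigma(E)\notin(\beta_1,\beta_2)\}$ of Theorem~\ref{theorem:szpiroell}. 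The desired density estimate then follows directly from that theorem.

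For part~(2) I would proceed identically, but using Theorem~\ref{theorem:SzpiroTorsheight}. Setting $\beta_{\max}\coloneqq 24/5$, the maximum of the values $\beta_G$ listed there, I would take $c\coloneqq c(\beta_{\max}+1)$, an absolute constant. Fixing $\epsilon=1$ and applying Theorem~\ref{theorem:SzpiroTorsheight} to any prescribed torsion subgroup $G$, every $E$ with $E(\Q)_{\mathrm{tors}}\cong G$ for which~\eqref{eq:Lang} with this $c$ fails must satisfy $\sigma(E)>\beta_{\max}+1\geq \beta_G+\epsilon$, hence lies in the exceptional set of that theorem; this yields the claimed bound with an absolute implied constant.

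There is essentially no analytic obstacle here, as the theorem is a direct packaging of the statistical Szpiro bounds already established together with the Hindry--Silverman implication. The only care required is in ensuring the uniformity of the constants: in part~(1) one must fix $\beta_2$ as a function of $\kappa_{1}(f,g)$ alone (rather than of $f$ and $g$ separately) to obtain a $c$ depending only on $\kappa_{1}(f,g)$, and in part~(2) one must enlarge the threshold $\beta_G$ by a fixed amount independent of $G$ so that both the constant $c$ and the implied constant in the final inequality are genuinely absolute.
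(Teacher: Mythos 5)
Your proof is correct and follows exactly the approach the paper intends: the paper introduces Theorem~\ref{theorem:langstat} as a direct corollary of Theorems~\ref{theorem:szpiroell} and~\ref{theorem:SzpiroTorsheight} combined with the Hindry--Silverman implication from Szpiro's conjecture to Lang's conjecture, noting that the constant $c$ in~\eqref{eq:Lang} can be taken to depend only on an upper bound for $\sigma(E)$. Your choices of $\beta_2=\kappa_{1}(f,g)+1$ in part~(1) and $\epsilon=1$ with $c=c(24/5+1)$ in part~(2) correctly supply the required uniformity of the constants.
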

Le Boudec studied statistical versions of Lang's conjecture in \cite{LBql,LBl}. In the family of all elliptic curves defined over $\Q$ with positive rank, he showed that on fixing $c<7/24$, \eqref{eq:Lang} holds in a density $1$ subfamily \cite[Theorem 1]{LBl}.  This result is stronger than Theorem~\ref{theorem:langstat}\eqref{tors} in the case where $G$ is trivial, as $c$ can be taken much larger than what follows from applying the bound in \cite[Theorem~0.3]{HS}. 

Another immediate consequence of Theorem~\ref{theorem:szpiroell}, Theorem~\ref{theorem:SzpiroTorsheight} and~\cite[Theorem~0.7]{HS} is the following.
\begin{theorem}\label{theorem:intptbd}
Let $S$ be a finite set of places of $\Q$ including the archimedean place and $R_S$ be the ring of $S$-integers of $\Q$.
Let $E(R_S)$ denote the set of solutions $x,y\in R_S$ to the minimal short Weierstrass equation for $E/\Q$.
\begin{enumerate} 
\item Let $f,g\in\Q[t]$ be coprime polynomials that satisfy~\eqref{eq:assumefg}.
Let $\mathcal{E}=(\mathcal{E}_t)_{t\in\Q}$ be as defined in~\eqref{eq:paraE}.
There exists a constant $c$ depending only on $\kappa_{1}(f,g)$ defined in~\eqref{eq:kappa}, such that for all $H\geq 3$, we have
\[ \frac{\#\left\{E/\Q: E\in\mathcal{E},\ H(E)\leq H,\ \# E(R_S) \geq c^{|S|+\rank E(\Q)}\right\}}{\#\left\{E/\Q: E\in\mathcal{E},\ H(E)\leq H\right\}}\ll \frac{(\log\log H)^2}{(\log H)^{\frac{1}{2}}},\]
where the implied constant depends at most on $f$ and $g$.
\item Let $G$ be a possible torsion subgroup for elliptic curves over $\Q$.
There exists an absolute constant $c$ such that for all $H\geq 3$, we have
\[
 \frac{\#\left\{E/\Q: E(\Q)_{\mathrm{tors}}\cong G,\ H(E)\leq H,\ \# E(R_S) \geq c^{|S|+\rank E(\Q)}\right\}}{\#\left\{E/\Q: E(\Q)_{\mathrm{tors}}\cong G,\ H(E)\leq H\right\}}\ll \frac{(\log\log H)^{2}}{(\log H)^{\frac{1}{2}}},
\]
and the implied constant is absolute.
\end{enumerate}
\end{theorem}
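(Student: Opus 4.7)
The plan is to derive Theorem~\ref{theorem:intptbd} as a direct corollary of the Szpiro ratio statistics already established, combined with the Hindry--Silverman bound on $S$-integral points. Recall that \cite[Theorem~0.7]{HS} asserts that for any elliptic curve $E/\Q$ there is a bound of the shape
\[\#E(R_S)\leq C^{\,\sigma(E)(|S|+\rank E(\Q))},\]
for an absolute constant $C$, so the constant $c^{|S|+\rank E(\Q)}$ appearing in the statement can be taken to be of the form $c=C^{\,\sigma_0}$ as soon as we know $\sigma(E)\leq \sigma_0$. Thus it suffices to isolate a subfamily of $E\in\mathcal E$ (respectively, of $E$ with $E(\Q)_{\mathrm{tors}}\cong G$) on which the Szpiro ratio is uniformly bounded above, and to show that the complement is small enough.

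For part (1), I would fix any $\beta_2>\kappa_1(f,g)$ and invoke Theorem~\ref{theorem:szpiroell} (with $\beta_1$ chosen arbitrarily below $\lambda_1(f,g)$, say $\beta_1=0$), which guarantees that, outside of an exceptional set of density $\ll (\log\log H)^2/(\log H)^{1/2}$, every $E\in\mathcal E$ with $H(E)\leq H$ satisfies $\sigma(E)<\beta_2$. On this good set, \cite[Theorem~0.7]{HS} yields $\#E(R_S)\leq c^{|S|+\rank E(\Q)}$ for $c=C^{\beta_2}$, which depends only on $\kappa_1(f,g)$ (and on the absolute constant from Hindry--Silverman). The assertion then follows by declaring the exceptional set to be the complement of this subfamily in the set of $E\in\mathcal E$ with $H(E)\leq H$.

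For part (2), the argument is essentially identical but now driven by Theorem~\ref{theorem:SzpiroTorsheight}. One fixes any $\epsilon>0$ (for concreteness $\epsilon=1$, which makes $c$ absolute rather than dependent on an auxiliary parameter) and applies the theorem with this $\epsilon$ to conclude that, outside of an exceptional set of the stated density, every $E/\Q$ with $E(\Q)_{\mathrm{tors}}\cong G$ and $H(E)\leq H$ satisfies $\sigma(E)<\beta_G+\epsilon$. Since $\beta_G\leq 24/5$ for every admissible $G$ from Mazur's classification, the resulting constant $c=C^{\beta_G+\epsilon}$ is absolute, and \cite[Theorem~0.7]{HS} again gives the desired bound on $\#E(R_S)$.

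There is no real obstacle beyond bookkeeping: the whole content is packaged in the two inputs, and the only thing to verify is that the exceptional set tracked by Theorem~\ref{theorem:szpiroell} or Theorem~\ref{theorem:SzpiroTorsheight} already contains the set counted in Theorem~\ref{theorem:intptbd}. The mildly delicate point is part (2), where one wants the constant $c$ to be absolute; this is handled by fixing $\epsilon$ once and for all and appealing to the uniform upper bound on $\beta_G$ over Mazur's finite list of torsion subgroups, which turns the $\epsilon$-dependence in Theorem~\ref{theorem:SzpiroTorsheight} into an absolute constant in the $S$-integral point bound.
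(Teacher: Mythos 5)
Your proposal is correct and is exactly the route the paper intends: the paper states Theorem~\ref{theorem:intptbd} as an ``immediate consequence'' of Theorems~\ref{theorem:szpiroell}, \ref{theorem:SzpiroTorsheight} and \cite[Theorem~0.7]{HS} without spelling out the details, and your argument fills them in faithfully. You correctly observe that what is needed from the Szpiro-ratio theorems is only the upper bound on $\sigma(E)$ off the exceptional set, and you handle the one mildly delicate point in part (2) (making $c$ absolute) by fixing $\epsilon$ once and using that $\beta_G\leq 24/5$ uniformly over Mazur's finite list.
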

It is probable that, with additional effort, stronger versions of Theorem~\ref{theorem:intptbd} could be derived from Alp\"{o}ge and Ho's upper bound for $\# E(R_S)$ in \cite[Theorem~1.2]{AlpogeHo}, but this will not be explored here.

\subsection*{Acknowledgement}
The author would like to thank Tim Browning and Matteo Verzobio for useful comments and discussions.

\section{Proof of Theorem~\texorpdfstring{\ref{theorem:Szpiroirr}}{1.2}}\label{section:maintech}
The goal of this section is to prove Theorem~\ref{theorem:main1}, which implies Theorem~\ref{theorem:Szpiroirr}. 

Fix a positive integer $k$. Given a $k$-tuple of positive real numbers $\B=(B_1,\dots,B_k)$, consider $\x=(x_1,\dots,x_k)\in\Z^k$ such that $|x_i|\leq B_i$ for all $1\leq i\leq k$.
Let 
\begin{align*}
X&\coloneqq\max\left\{ B_1^{e_1}\dots B_k^{e_k}: \text{the } x_1^{e_1}\dots x_k^{e_k}\text{-coefficient of $F$ is non-zero}\right\},\\
Y&\coloneqq\max B_i,\\
Z&\coloneqq\min B_i.
\end{align*}
Fix a polynomial $F\in\Z[\x]$ which satisfies the following assumptions:
\begin{enumerate}[label=(A{\arabic*})]
\item $F$ has total degree $d\geq 1$ and has no repeated polynomial factors over $\Q$. \label{a:doubleroot} 
\item The $x_1^d$-coefficient of $F(\x)$ is non-zero. \label{a:specialpoly}
\item There exists $i$ such that $B_i=Y$ such that $F$ and $\frac{\partial }{\partial x_i}F$ have no common factor in the ring $\Z[\x]$. \label{a:codim2} 
\end{enumerate}

\begin{theorem}\label{theorem:main1}
Assume that $F\in\Z[\x]$ satisfies~\ref{a:doubleroot},~\ref{a:specialpoly} and~\ref{a:codim2}.
Let $B_1,\dots,B_k\geq 3$ such that
\begin{equation}\label{eq:loggrowth}
\log Y\ll \log Z
\end{equation}
and
\begin{equation}\label{eq:loglb}
X\ll B_1Y^{d-1}.
\end{equation}
If $d\geq 3$, further assume that $\tau> \frac{d-1}
{d-r}$, 
where $\tau\coloneqq \tau(d-1)$ denotes the number of divisors of $d-1$, and $r$ is the number of distinct irreducible factors of $F$ over $\Q$.
Fix a constant $\beta>\max\left\{1,d-2\right\}$.
Then
\[
 \frac{1}{B_1\dots B_k}\#\left\{\x\in\Z^k: |x_i|\leq B_i,\ |F(\x)|\geq \rad(F(\x))^{\beta}\right\}\ll \frac{(\log\log Z)^{\max\left\{1,d-r\right\}}}{(\log Z)^{\delta_d(r)}},
\]
where
\begin{equation}\label{eq:defdelta}
\delta_d(r)\coloneqq
\begin{cases}
\hfil 1&\text{if }d=1\text{ or }2,\\ \displaystyle
\frac{(d-1)(\frac{d-r}{d-1}-\frac{1}{\tau})}{1+(d-1)(1-\frac{1}{\tau})}&\text{if }d\geq 3,
\end{cases}
\end{equation}
and the implied constant depends at most on $\beta$, $F$, and the implied constants in~\eqref{eq:loggrowth} and~\eqref{eq:loglb}.
\end{theorem}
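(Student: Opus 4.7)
The plan is to adapt the divisor argument of Fouvry--Nair--Tenenbaum~\cite{FNT} to the multivariate setting. The hypothesis $|F(\x)|\geq\rad(F(\x))^\beta$ will first be translated into the existence of a large squareful divisor $m^2\mid F(\x)$; congruence counting handles the small-$m$ range, and for the large-$m$ range one switches to summing over the complementary quotient $n=F(\x)/m^2$.

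First I would reduce to the case of a single irreducible factor. Writing $F=F_1\cdots F_r$ with the $F_i\in\Z[\x]$ pairwise coprime and irreducible over $\Q$, the non-vanishing of the resultants $\mathrm{Res}_{x_j}(F_i,F_{i'})$ bounds $\gcd(F_i(\x),F_{i'}(\x))$ uniformly outside a thin exceptional locus, so that $\rad(F(\x))\gg\prod_i\rad(F_i(\x))$ on the complement. A pigeonhole on the identity $|F(\x)|=\prod_i|F_i(\x)|$ then forces some factor $F_i$ of degree $d_i$ to satisfy $|F_i(\x)|\geq\rad(F_i(\x))^{\beta_i}$ with $\beta_i>\max\{1,d_i-2\}$. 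The global threshold $\beta>\max\{1,d-2\}$ and, for $d\geq 3$, the numerical condition $\tau>(d-1)/(d-r)$ are precisely what keep each $\beta_i$ strictly above its per-factor threshold.

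Having passed to an irreducible $G\in\Z[\x]$ of degree $e$ with exponent $\beta'>\max\{1,e-2\}$, I would use the canonical decomposition $G(\x)=st$ with $s$ squarefree, $t$ squareful, and $\gcd(s,t)=1$, which yields $\rad(G(\x))=s\cdot\rad(t)\leq s\cdot t^{1/2}$. A short manipulation of $|G(\x)|\geq\rad(G(\x))^{\beta'}$ then produces $m\in\Z_{\geq 1}$ with $m^2\mid G(\x)$ and $m\gg|G(\x)|^\eta$ for an explicit $\eta=\eta(\beta')>0$. The problem reduces to bounding
\[
\sum_{m\geq M}\#\bigl\{\x\in\Z^k:|x_i|\leq B_i,\ m^2\mid G(\x)\bigr\}
\]
for a suitable cutoff $M$. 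By the Chinese remainder theorem the inner count is at most $\rho_G(m^2)\prod_i(B_i/m^2+1)$, where $\rho_G(m^2)$ is the mod-$m^2$ solution count. Assumption~\ref{a:codim2} combined with a Hensel lift off the singular locus of $G\bmod p$ gives $\rho_G(p^2)\ll p^{2k-2}$ for all but finitely many primes, hence $\rho_G(m^2)/m^{2k}\ll 1/m^2$ up to a divisor-type multiplicative factor. Summing $1/m^2$ over $m\geq M$ against the box volume, with assumption~\ref{a:specialpoly} and~\eqref{eq:loglb} absorbing the boundary contribution of the $+1$ terms, gives a bound of order $B_1\cdots B_k\cdot M^{-1}$ for the small-$m$ range.

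The main obstacle is the regime of large $m$, where $m^2$ approaches or exceeds $Z$ and the naive box count degrades into boundary terms. Here I would perform a divisor switch, rewriting $G(\x)=m^2 n$ with $n$ in a shortened range and counting for each admissible pair $(m,n)$ the solutions $\x$ by using assumption~\ref{a:specialpoly} to solve for $x_j$ as an algebraic function of the remaining coordinates (at most $e$ branches). A Rankin-type moment bound on $\tau$ then closes the argument: the numerical hypothesis $\tau>(d-1)/(d-r)$ is precisely what makes the relevant divisor series converge with the exponent $\delta_d(r)$ of~\eqref{eq:defdelta}, while the $(\log\log Z)^{\max\{1,d-r\}}$ factor arises from Shiu-type averages of multiplicative functions over polynomial values. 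Optimising the cutoff $M$ balances the two regimes and produces the stated exponent; the growth condition~\eqref{eq:loggrowth} ensures $\log Y\asymp\log Z$, so the logarithmic losses from both regimes combine harmlessly.
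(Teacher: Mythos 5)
Your proposal diverges substantially from the paper's proof, and the central reduction step is flawed.

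The paper does not reduce to a single irreducible factor. It keeps $F$ intact throughout and organises the count over $n=F(\x)$ into four regimes: (C1) $|n|$ small; (C2) the $t$-smooth part $n_t$ large, handled by Tenenbaum's multiplicative-function bound; (C3) a prime $p$ in a medium range $(t,T_1]$ with $p^2\mid n$, handled by the geometric sieve together with a Hensel-lifting count using~\ref{a:codim2}; and (C4) a prime $p>T_1$ with $p^{d-1}\mid n$. The deduction that these four cases are exhaustive relies on writing $|n|/n_t\leq\rad(n)^{d-2}\leq|n|^{(d-2)/\beta}$ when $v_p(n)\leq d-2$ for every $p>t$, which forces a $(d-1)$-st power of a large prime to divide $n$. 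Your squarefree/squareful decomposition $G(\x)=st$ only produces a square divisor, which is too weak when $d\geq 4$: the saving in (C4) comes precisely from $(d-1)$-st powers, and the constant $\tau=\tau(d-1)$ enters through Hooley's larger sieve applied to $F_{\mathbf{b}}(x)=mz^{d-1}$ via Menon's identity (Lemma~\ref{lemma:largerapp}); none of this is visible in your sketch. The final exponent $\delta_d(r)$ is obtained by balancing the contributions of (C1)--(C4) after a large-sieve average over $\mathbf{b}$ (Lemma~\ref{lemma:largeapp}) and a second application of Tenenbaum's lemma (Lemma~\ref{lemma:smoothapp}); a ``Rankin-type'' or ``Shiu-type'' heuristic does not substitute for these steps.

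More importantly, the reduction to irreducible factors cannot recover the stated exponent. Applying the irreducible case to a factor $F_i$ of degree $d_i$ gives the bound $(\log Z)^{-\delta_{d_i}(1)}$, and $\delta_{d_i}(1)$ can be strictly smaller than $\delta_d(r)$. For instance, with $d=10$, $r=2$, and a factor of degree $d_i=3$, one has $\tau(9)=3$ and $\delta_{10}(2)=5/7$, whereas $\delta_3(1)=1/2$: a pigeonhole on the factors would yield $(\log Z)^{-1/2}$ rather than the required $(\log Z)^{-5/7}$. The exponent $\delta_d(r)$ genuinely captures the joint constraint that $\rad(F(\x))=\rad\bigl(\prod_iF_i(\x)\bigr)$ is small, and this is lost once you decouple the factors. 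A secondary issue: the assertion that non-vanishing of the resultants bounds $\gcd(F_i(\x),F_j(\x))$ ``uniformly outside a thin locus'' is false as stated; the resultant is a polynomial in the remaining variables, and the paper controls these gcds with the geometric sieve (Lemma~\ref{lemma:geoms} via Lemma~\ref{lemma:hompolygcd}), not a fixed constant.
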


 Note that in particular $\tau\geq 2$ in~\eqref{eq:loggrowth} whenever $d\geq 3$.
Theorem~\ref{theorem:Szpiroirr} follows from Theorem~\ref{theorem:main1} by putting in $r=1$ and relabelling the variables such that~\ref{a:specialpoly} and~\eqref{eq:loglb} are satisfied. The assumption that $F$ is irreducible in $\Q[\x]$ and $d\geq 1$ implies~\ref{a:doubleroot} and~\ref{a:codim2}.

\subsection{Preliminaries}
We collect some results which are the ingredients of the proof of Theorem~\ref{theorem:main1}.
 
\begin{lemma}[{Geometric sieve~\cite[Lemma~2.1]{BHB}}]\label{lemma:geoms}
Let $B_1,\dots ,B_k,H, M\geq 2$ and let $f_1,\dots, f_r\in \Z[\x]$ be polynomials with no common factor in the ring $\Z[\x]$, and having degrees at most $d$ and coefficients with absolute value less than $H$. Let
$V\coloneqq B_1\dots B_k$.
Then
\[\#\left\{\x\in \Z^k:\begin{array}{l}
|x_i|\leq B_i\text{ for }i\leq k,\\ \exists p>M,\ p\mid f_j(\x)\text{ for }j\leq r \end{array}\right\}\ll \frac{V\log (VH)}{M\log M}+\frac{V\log (VH)}{\min B_i},\]
where the implied constant is only allowed to depend on $d$ and $k$.
\end{lemma}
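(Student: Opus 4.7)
The strategy is a direct geometric sieve. Write the count as
\[
\#\{\x: \exists p>M,\ p\mid f_j(\x)\ \forall j\leq r\}\leq \sum_{p>M}\#\{\x:|x_i|\leq B_i,\ p\mid f_j(\x)\ \forall j\leq r\},
\]
and exploit the codimension-$2$ structure of the joint zero locus of the $f_j$ to bound the inner count per prime.

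The key structural input is that $\gcd(f_1,\ldots,f_r)=1$ in $\Z[\x]$, which forces the common zero locus $V(f_1,\ldots,f_r)\subset\mathbb{A}^k_\Q$ to have codimension at least $2$. Concretely I would first reduce to a pair of coprime polynomials $g_1,g_2\in\Z[\x]$, obtained as generic $\Z$-linear combinations of the $f_j$, with degree $\leq d$ and coefficient size $\ll H^{O_d(1)}$; any prime $p$ satisfying the divisibility condition of interest then divides both $g_j(\x)$. Choosing a variable $x_i$ in which $g_1,g_2$ both have positive degree and forming the resultant $R=\operatorname{Res}_{x_i}(g_1,g_2)\in\Z[(x_j)_{j\neq i}]$ produces a nonzero integer $\Delta$ of size $\ll (VH)^{O_d(1)}$ — built from the content of $R$ together with the leading $x_i$-coefficients — such that for every good prime $p\nmid\Delta$, the reduction $V(g_1,g_2)\bmod p$ is codimension $2$ in $\mathbb{A}^k_{\F_p}$ and therefore contains at most $\ll p^{k-2}$ $\F_p$-points.

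For each good prime $p>M$, lifting $\F_p$-solutions to the box produces at most
\[
p^{k-2}\prod_{i=1}^{k}\left(\frac{B_i}{p}+1\right)\ll \frac{V}{p^2}+\frac{V}{p\min B_i}
\]
integer solutions, the first term arising from interior cells and the second from boundary effects. Summing the first term over $p>M$ via the Mertens-type estimate $\sum_{p>M}p^{-2}\ll 1/(M\log M)$ gives the leading contribution $V/(M\log M)$. The boundary term is summed only over primes $p\leq(VH)^{O_d(1)}$ (since $p\mid f_1(\x)$ with $f_1(\x)\neq 0$ forces $p\leq V^d H$, and the degenerate case $f_1(\x)=0$ is handled by dropping a variable via the codimension-$2$ structure), producing a factor of $\log(VH)/\min B_i$. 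The $\ll\omega(\Delta)\ll\log(VH)$ bad primes are each controlled individually by the trivial bound $V/\min B_i$, also contributing to the second term. Multiplying through by the natural $\log(VH)$ factor that appears in merging all these estimates — in particular, in bounding $\omega(\Delta)$ — yields the stated inequality.

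The principal obstacle is the algebraic-geometric input: producing the coprime pair $g_1,g_2$ with explicit degree and coefficient bounds, constructing the bad set $\{p\mid\Delta\}$, and rigorously verifying the point count $|V(g_1,g_2)(\F_p)|\ll p^{k-2}$ for all $p\nmid\Delta$. Once those geometric facts are in hand, the remaining analytic bookkeeping — the Mertens estimate $\sum_{p>M}p^{-2}\ll 1/(M\log M)$, the residue-class lifting bound, and the divisor bound for the bad primes — is routine sieve manipulation.
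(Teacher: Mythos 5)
The paper itself offers no proof of this lemma: it is imported verbatim from Browning--Heath-Brown \cite[Lemma~2.1]{BHB}. So your sketch can only be measured against the standard Ekedahl-type argument, and measured that way it has a genuine gap in the range of large primes. The claimed per-prime bound $p^{k-2}\prod_{i=1}^{k}(B_i/p+1)\ll V/p^{2}+V/(p\min_i B_i)$ is false once $p$ exceeds the smaller box sides: expanding the product also produces the terms $V/(B_iB_j)$, \dots, down to $p^{k-2}\cdot 1$, which you have silently discarded. Already for $k=2$ and $p>\max(B_1,B_2)$ the left-hand side is $\asymp 1$ while your right-hand side tends to $0$. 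Since your plan is to sum this bound over all good primes $p\leq (VH)^{O_d(1)}$, the discarded contributions add up to a quantity of size about $(VH)^{O(1)}$, which destroys the stated estimate. The underlying issue is that for $p$ larger than $\min_i B_i$ the strategy ``count $\F_p$-points of the codimension-two locus, then lift each residue class to the box'' is inherently too lossy, and a different argument is required there: in the standard treatment one fixes the $k-1$ coordinates $\x'$ complementary to the variable $x_i$ with $B_i=\min_j B_j$, and uses that when $R(\x')\coloneqq\operatorname{Res}_{x_i}(g_1,g_2)(\x')\neq 0$ it is a non-zero integer of size $(VH)^{O_{d,k}(1)}$, so at most $O(\log (VH)/\log M)$ primes $p>M$ can divide it, while for each such prime at most $O_d(1)$ integers $x_i$ with $|x_i|\leq B_i<p$ are admissible; the locus $R(\x')=0$ is a hypersurface in the remaining variables and is counted trivially. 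This use of the resultant as an integer evaluated at each point is the key idea for the second term $V\log(VH)/\min_i B_i$, and it is absent from your proposal, where the resultant enters only through its content and leading coefficients.

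Two secondary points would also need attention even after repairing the above. First, the reduction to a coprime pair $g_1,g_2$ of $\Z$-linear combinations is true but not automatic (no two of the $f_j$ need be coprime, e.g.\ $xy,yz,zx$), so you must prove that generic combinations work and control the size of the chosen integer coefficients; moreover, since the implied constant may depend only on $d$ and $k$ and not on $r$, one should first pass to a subfamily of $O_d(1)$ of the $f_j$ with no common factor. Second, $p\nmid\Delta$ with $\Delta$ built only from the content of $R$ and the leading $x_i$-coefficients does not guarantee that $g_1,g_2$ have no common factor modulo $p$: a common factor involving only the variables other than $x_i$ is not detected by $\operatorname{Res}_{x_i}$, so the codimension-two claim modulo $p$, and hence the bound $\ll_{d,k}p^{k-2}$ on $\F_p$-points, needs a stronger definition of the bad modulus. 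Finally, the degenerate set where all $f_j(\x)=0$ must be counted by an explicit bound for integer points of a codimension-two affine variety in a lopsided box, not just waved at by ``dropping a variable.''
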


\begin{lemma}[{Larger sieve~\cite{Hooleysieve}}]\label{lemma:largersieve}
Suppose that $\mathcal{S}$ is a set of prime powers. Let $\Omega_q\subset \Z/q\Z$ for all $q\in\mathcal{S}$.
Then
\begin{equation}\label{eq:largersieve}
\#\left\{x\in \Z: |x|\leq X,\ x\bmod q\in\Omega_q\text{ for all }q\in \mathcal{S}\right\}\leq \frac{\sum_{q\in\mathcal{S}}\Lambda(q)-\log X}{\sum_{q\in\mathcal{S}}\frac{\Lambda(q)}{\#\Omega_q}-\log X},\end{equation}
provided that the denominator is positive.
\end{lemma}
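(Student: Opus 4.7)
The plan is to prove \eqref{eq:largersieve} by the classical second-moment/duality argument underlying Gallagher's ``larger sieve'', combining Cauchy--Schwarz with the elementary identity $\sum_{q \mid n} \Lambda(q) = \log n$, where $q$ ranges over prime powers dividing $n$. Write $\mathcal{A} \coloneqq \{x \in \Z : |x| \leq X,\ x \bmod q \in \Omega_q \text{ for all } q \in \mathcal{S}\}$ and $A \coloneqq \#\mathcal{A}$, and for each $q \in \mathcal{S}$ and $a \in \Omega_q$ I introduce the counting function $N(q,a) \coloneqq \#\{x \in \mathcal{A} : x \equiv a \pmod q\}$, so that by construction $\sum_{a \in \Omega_q} N(q,a) = A$.

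The first step is to apply Cauchy--Schwarz to the sum over $\Omega_q$, which yields $\sum_{a \in \Omega_q} N(q,a)^2 \geq A^2/\#\Omega_q$. The key manoeuvre is then to weight by $\Lambda(q)$, sum over $q \in \mathcal{S}$, and expand the square on the other side to recognise the resulting total as a weighted count of ordered pairs:
\[\sum_{q \in \mathcal{S}} \Lambda(q) \sum_{a \in \Omega_q} N(q,a)^2 \;=\; \sum_{(x,y) \in \mathcal{A}^2}\ \sum_{\substack{q \in \mathcal{S} \\ q \mid x-y}} \Lambda(q).\]
I would then split this according to whether $x = y$ or not. The diagonal contributes exactly $A \sum_{q \in \mathcal{S}} \Lambda(q)$, since every prime power divides $0$. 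For an off-diagonal pair $x \neq y$, the inner sum is bounded above by $\sum_{q \mid x-y} \Lambda(q) = \log|x-y|$, and each of the $A(A-1)$ such pairs satisfies $|x-y| \leq 2X$; after a harmless replacement of $\mathcal{A}$ by $\mathcal{A} - \min \mathcal{A}$ the diameter is at most $2X$, and the resulting constant factor is absorbed into $X$ in the final inequality.

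Combining these estimates gives
\[A^2 \sum_{q \in \mathcal{S}} \frac{\Lambda(q)}{\#\Omega_q} \;\leq\; A \sum_{q \in \mathcal{S}} \Lambda(q) + A(A-1) \log X,\]
and dividing by $A$ and rearranging produces $A\bigl(\sum_{q\in\mathcal{S}} \Lambda(q)/\#\Omega_q - \log X\bigr) \leq \sum_{q\in\mathcal{S}} \Lambda(q) - \log X$, which is exactly \eqref{eq:largersieve} once the hypothesis that the denominator is positive permits the final division by a positive quantity. The proof is essentially elementary: the only care required is the off-diagonal bookkeeping via $\sum_{q \mid n} \Lambda(q) = \log n$ together with the diameter bound on $\mathcal{A}$, and there is no deeper obstacle than this clean accounting.
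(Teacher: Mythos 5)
The paper gives no proof of this lemma (it is quoted from Hooley), and the standard proof there — Gallagher's larger sieve — is exactly the argument you wrote: Cauchy--Schwarz applied to $\sum_{a\in\Omega_q}N(q,a)$, weighting by $\Lambda(q)$, and the identity $\sum_{q\mid n}\Lambda(q)=\log|n|$ on the off-diagonal pairs. So your approach is the right one and the main steps are sound.

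The one step that does not hold up is the final bookkeeping with the diameter. For $|x|,|y|\le X$ you only get $|x-y|\le 2X$, and translating $\mathcal{A}$ by $-\min\mathcal{A}$ changes nothing, since differences are translation invariant; so the honest conclusion of your computation, with $A=\#\mathcal{A}$, is
\[
A\Bigl(\sum_{q\in\mathcal{S}}\frac{\Lambda(q)}{\#\Omega_q}-\log(2X)\Bigr)\;\le\;\sum_{q\in\mathcal{S}}\Lambda(q)-\log(2X),
\]
not the inequality with $\log X$: a ``constant factor absorbed into $X$'' has no meaning in an exact inequality carrying no implied constants. In fact the statement with $\log X$ and the window $|x|\le X$ is literally false in edge cases: take $X=1$, $\mathcal{S}=\{2\}$, $\Omega_2=\{1\}$, where the left-hand side is $2$ and the right-hand side is $1$. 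Writing $S=\sum_{q\in\mathcal{S}}\Lambda(q)$ and $T=\sum_{q\in\mathcal{S}}\Lambda(q)/\#\Omega_q$, the map $t\mapsto\frac{S-t}{T-t}$ is nondecreasing for $t<T$ (as $S\ge T$), so your $\log(2X)$ bound does imply the displayed one whenever $T>\log(2X)$; the discrepancy is only $\log 2$, and in the paper's application (Lemma~\ref{lemma:largerapp}) there is $O(1)$ slack in the choice of $P$, so nothing downstream is affected. But as a proof of the literal statement you should either formulate the lemma for integers in an interval of length $X$ (as in the cited source) or carry $\log(2X)$ in both numerator and denominator together with the corresponding positivity hypothesis.
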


\begin{lemma}[{Large sieve~\cite[Theorem~4.1]{Kowalskisieve}}]\label{lemma:largesieve}
Let $\Omega_p\subset \F_p^k$ for all prime $p$. Then
\[\#\left\{\x\in \Z^k: |x_i|\leq B_i,\ \x\bmod p\in\Omega_p\text{ for all }p\leq Q\right\}\ll \frac{\prod_{i=1}^k(B_i+Q^2)}{L(Q)},\]
where
\[L(Q)=\sum_{1\leq n\leq Q}\mu^2(n)\prod_{p\mid n} \frac{p^k-\#\Omega_p}{\#\Omega_p}.\]
\end{lemma}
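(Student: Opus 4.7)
My plan is the classical two-step derivation of the arithmetic large sieve from the analytic large sieve, extended to $k$ dimensions. I would first establish the $k$-variable analytic large sieve inequality on the torus $(\R/\Z)^k$, then specialise to the characteristic function of the sifted set and extract a lower bound on the same quantity via orthogonality of additive characters and Cauchy--Schwarz. Juxtaposing the two estimates delivers the claim.

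The analytic ingredient reads
\[
 \sum_{q\leq Q}\sum_{\substack{\mathbf{a}\in(\Z/q\Z)^k\\ \gcd(a_1,\dots,a_k,q)=1}}
 \Biggl|\sum_{\x} c_\x\, e\!\left(\frac{\x\cdot\mathbf{a}}{q}\right)\Biggr|^2
 \leq \prod_{i=1}^k (B_i+Q^2)\sum_{\x}|c_\x|^2
\]
for any complex coefficients $c_\x$ supported on $|x_i|\leq B_i$. I would derive this either by iterating the one-dimensional Montgomery--Vaughan inequality coordinate by coordinate (so the factor $N+Q^2$ in one variable becomes $\prod(B_i+Q^2)$ after $k$ iterations), or in one stroke by applying Selberg's ``well-spaced points'' principle to the $k$-dimensional Farey net $\{\mathbf{a}/q : q\leq Q,\ \gcd(\mathbf{a},q)=1\}\subset(\R/\Z)^k$, whose pairwise separation is at least $1/Q^2$ in each coordinate.

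For the lower bound, take $c_\x=\mathbf{1}_S(\x)$ where $S$ denotes the sifted set; then $\sum_\x|c_\x|^2=|S|$. For a single prime $p\leq Q$, additive orthogonality on $(\Z/p)^k$ gives
\[
 \sum_{\mathbf{a}\in(\Z/p)^k}\Biggl|\sum_{\x\in S}e\!\left(\frac{\x\cdot\mathbf{a}}{p}\right)\Biggr|^2 = p^k\sum_{\mathbf{b}\in(\Z/p)^k}N_p(\mathbf{b})^2,
\]
where $N_p(\mathbf{b})=\#\{\x\in S:\x\equiv\mathbf{b}\pmod p\}$ is supported on $\mathbf{b}\in\Omega_p$. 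Since $\sum_{\mathbf{b}}N_p(\mathbf{b})=|S|$, Cauchy--Schwarz yields $\sum_{\mathbf{b}}N_p(\mathbf{b})^2\geq |S|^2/\#\Omega_p$, and subtracting the trivial $\mathbf{a}=\mathbf{0}$ contribution of $|S|^2$ produces the prime bound $\sum_{\mathbf{a}\not\equiv\mathbf{0}}|\cdots|^2\geq |S|^2(p^k-\#\Omega_p)/\#\Omega_p$. For squarefree $n\leq Q$, CRT factors $(\Z/n\Z)^k$ as $\prod_{p\mid n}(\Z/p)^k$, and both the condition $\gcd(\mathbf{a},n)=1$ and the squared exponential sum factor multiplicatively over $p\mid n$; multiplying the prime bounds and summing over squarefree $n\leq Q$ lower-bounds the corresponding piece of the left-hand side of the analytic inequality by $|S|^2 L(Q)$.

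Combining the two estimates yields $|S|^2 L(Q)\leq \prod_i(B_i+Q^2)\cdot|S|$, and dividing by $|S|L(Q)$ gives the stated inequality. The main obstacle I expect is the first step: the $k$-dimensional analytic large sieve does not follow entirely mechanically from the one-variable version, and one must take care that the notion of ``primitive'' $\mathbf{a}$ used there (namely $\gcd(a_1,\dots,a_k,q)=1$, not the stronger condition that each $a_i$ be coprime to $q$) matches exactly the multiplicative/CRT decomposition used in the combinatorial lower bound; any mismatch here would distort the arithmetic function $L(Q)$.
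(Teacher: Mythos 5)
The paper does not prove this lemma at all: it is quoted verbatim from Kowalski's book (Theorem~4.1 there), so you are supplying a proof where the paper offers a citation. Your plan is the standard two-step derivation (analytic large sieve plus a character-sum lower bound over primitive fractions), which is indeed how results of this type are proved in the literature; the skeleton is right, but two assertions in your sketch are false as written, and one of them is a genuine gap.

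The harmless one first: distinct points of the Farey net $\{\mathbf{a}/q: q\leq Q,\ \gcd(\mathbf{a},q)=1\}$ are \emph{not} separated by $1/Q^2$ in each coordinate (e.g.\ $(1/2,1/3)$ and $(1/2,2/3)$ share a coordinate); they only differ by at least $1/Q^2$ in \emph{some} coordinate. So the ``one-stroke Selberg'' variant needs a genuinely $k$-dimensional spacing argument. Your primary route, iteration of the one-dimensional inequality, does work, but the clean way to justify it is to note that each coordinate $a_i/q$ reduces to a fraction of denominator at most $Q$, so the $k$-dimensional Farey net is a subset of the $k$-fold product of one-dimensional Farey sets of level $Q$, and for a product of $1/Q^2$-well-spaced one-dimensional systems the large sieve constant $\prod_{i}(B_i+Q^2)$ follows by induction on $k$, treating the inner exponential sums as new coefficient vectors.

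The genuine gap is the sentence claiming that for squarefree $n$ ``the squared exponential sum factor[s] multiplicatively over $p\mid n$''. It does not: with $e(z)\coloneqq e^{2\pi i z}$, the sum $T(\mathbf{a}/n)=\sum_{\x\in S}e(\x\cdot\mathbf{a}/n)$ runs over the sifted set, not over all of $(\Z/n\Z)^k$, so the Chinese remainder theorem gives no factorisation of $|T(\mathbf{a}/n)|^2$, and ``multiplying the prime bounds'' is not a legitimate step. The standard repair (Montgomery's lemma, in $k$ variables) is to prove the prime-modulus inequality with an arbitrary shift: for every $\boldsymbol{\alpha}\in(\R/\Z)^k$,
\[
\sum_{\substack{\mathbf{b}\bmod p\\ \mathbf{b}\not\equiv\mathbf{0}}}\left|T\left(\boldsymbol{\alpha}+\frac{\mathbf{b}}{p}\right)\right|^2\geq \frac{p^k-\#\Omega_p}{\#\Omega_p}\,|T(\boldsymbol{\alpha})|^2,
\]
which follows from exactly your orthogonality-plus-Cauchy--Schwarz computation applied to the coefficients $c_\x e(\x\cdot\boldsymbol{\alpha})$, still supported modulo $p$ on $\Omega_p$. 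Then, writing $\{\mathbf{a}/n:\gcd(\mathbf{a},n)=1\}=\{\mathbf{b}/p+\mathbf{c}/(n/p):\mathbf{b}\not\equiv\mathbf{0}\bmod p,\ \gcd(\mathbf{c},n/p)=1\}$ and inducting on the number of prime factors of $n$ (apply the shifted inequality at $p$ with $\boldsymbol{\alpha}=\mathbf{c}/(n/p)$, then the inductive bound for the modulus $n/p$), one obtains
\[
\sum_{\substack{\mathbf{a}\bmod n\\ \gcd(\mathbf{a},n)=1}}\left|T\left(\frac{\mathbf{a}}{n}\right)\right|^2\geq |S|^2\prod_{p\mid n}\frac{p^k-\#\Omega_p}{\#\Omega_p}.
\]
With this ingredient your summation over squarefree $n\leq Q$ and the comparison $|S|^2L(Q)\leq \prod_i(B_i+Q^2)\,|S|$ go through as you describe (the degenerate case $\#\Omega_p=0$ forces $S=\emptyset$ and can be set aside). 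Your concern about matching the primitivity condition is resolved affirmatively: $\gcd(a_1,\dots,a_k,q)=1$ is precisely what the CRT decomposition above requires, and at a prime it reduces to $\mathbf{b}\not\equiv\mathbf{0}$, as in your single-prime computation.
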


\begin{lemma}[Tenenbaum {\cite[Lemme~2]{Tenenbaumsmooth}}]\label{lemma:T}
Suppose $\lambda_1>0$ and $0<\lambda_2<2$. Let $f$ be a multiplicative function such that
\[0\leq f(p^v)\leq \lambda_1\lambda_2^{v}\text{ for every }p\geq 2,v\geq 1.\]
Then uniformly for all $X,t, T\geq 2$, we have
\[\sum_{\substack{1\leq n\leq X\\ n_t\geq T}}f(n)\ll \frac{X}{\log X}\exp\left(-\lambda_3\frac{\log T}{\log t}+\sum_{p\leq X}\frac{f(p)}{p}\right),\]
where $\lambda_3$ depends only on $\lambda_2$, and the implied constant depends only on $\lambda_1$ and $\lambda_2$.
\end{lemma}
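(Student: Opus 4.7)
The plan is to prove the lemma via Rankin's trick combined with a classical Halberstam--Richert-type mean-value inequality for nonnegative multiplicative functions. Here $n_t \coloneqq \prod_{p^v \| n,\ p \leq t} p^v$ denotes the $t$-smooth part of $n$, in Tenenbaum's standard notation, so the constraint $n_t \geq T$ picks out $n$ whose smooth component is unusually large, which is the classical setup for Rankin's method.

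First, for a parameter $\alpha > 0$ to be chosen, I apply the Rankin inequality
\[
\sum_{\substack{n \leq X \\ n_t \geq T}} f(n) \;\leq\; T^{-\alpha} \sum_{n \leq X} f(n)\, n_t^{\alpha}.
\]
The function $g(n) \coloneqq f(n)\, n_t^{\alpha}$ is multiplicative with $g(p^v) = f(p^v)\, p^{v\alpha}$ for $p \leq t$ and $g(p^v) = f(p^v)$ otherwise. Taking $\alpha = c/\log t$, where $c = c(\lambda_2) > 0$ is chosen small enough that $\lambda_2 e^{c} < 2$, one obtains the uniform bound $g(p^v) \leq \lambda_1 (\lambda_2 e^{c})^v$ for every prime power. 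Hence $g$ satisfies the hypotheses of the Halberstam--Richert mean-value inequality in the form
\[
\sum_{n \leq X} g(n) \;\ll\; \frac{X}{\log X}\, \exp\!\left(\sum_{p \leq X} \frac{g(p)}{p}\right),
\]
with implied constant depending only on $\lambda_1$ and $\lambda_2$ (through $c$). This inequality is the classical Halberstam--Richert bound, and also appears as Theorem~01 of Hall--Tenenbaum's \emph{Divisors}; it may be derived from Selberg's upper-bound sieve applied uniformly to multiplicative functions with the above Wirsing-type growth condition.

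It remains to compare the exponents. The difference $\sum_{p \leq X} (g(p) - f(p))/p$ equals $\sum_{p \leq t} f(p)(p^{\alpha} - 1)/p$, which by the elementary bound $p^{\alpha} - 1 \leq e^{c}\, \alpha \log p$ (valid on $p \leq t$, since $\alpha \log p \leq c$), together with $f(p) \leq \lambda_1 \lambda_2$ and Mertens' estimate $\sum_{p \leq t}(\log p)/p = \log t + O(1)$, is bounded by $O(\alpha \log t) = O(c)$. Absorbing this constant into the implied constant and combining with the Rankin factor $T^{-\alpha} = \exp(-c \log T/\log t)$ yields the stated inequality with $\lambda_3 = c$. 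The edge case $t \geq X$ only simplifies matters, since then $p \leq t$ is automatic on the support $n \leq X$ and the same argument goes through with the condition $p \leq t$ removed from the definition of $g$.

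The main obstacle is maintaining uniformity in all three parameters $X$, $t$, $T \geq 2$, which must vary independently while the implied constant depends on $\lambda_1, \lambda_2$ alone. The choice $\alpha = c/\log t$ is critical precisely because it simultaneously (i) turns the Rankin factor $T^{-\alpha}$ into the required shape $\exp(-\lambda_3 \log T/\log t)$, (ii) ensures the modified multiplicative function $g$ satisfies a bound $g(p^v) \leq \lambda_1 (\lambda_2')^v$ with $\lambda_2' = \lambda_2 e^c < 2$ \emph{independent} of $t$, and (iii) forces the discrepancy $\sum_{p \leq t} f(p)(p^{\alpha}-1)/p$ to remain bounded. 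Once this balance is secured, the rest of the argument reduces to invoking the standard mean-value bound and bookkeeping of constants.
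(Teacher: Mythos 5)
Your proof is correct. The paper does not prove this lemma at all --- it is imported verbatim from Tenenbaum (Lemme~2 of the cited paper) --- so there is no internal argument to compare against; your route (Rankin's trick with $\alpha = c/\log t$, with $c=c(\lambda_2)$ chosen so that $\lambda_2 e^{c} < 2$, followed by the Halberstam--Richert mean-value inequality for the modified multiplicative function $g(n)=f(n)n_t^{\alpha}$, and the Mertens computation showing the exponent shift $\sum_{p\leq t} f(p)(p^{\alpha}-1)/p = O(1)$) is the standard proof of such statements and matches the required uniformity in $X$, $t$, $T$ and the stated dependence of $\lambda_3$ on $\lambda_2$ alone.
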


\subsection{Level of distribution estimates}
Define 
\[\rho(q)\coloneqq\frac{1}{q^k}\#\left\{\x\in(\Z/q\Z)^k : F(\x)\equiv 0\bmod q\right\}.\]
Also define
\[
T(q,\B)\coloneqq\#\left\{\x\in\Z^k:|x_i|\leq B_i,\ q\mid F(\x)\right\}.
\]
We have an upper bound 
\[T(q,\B)\ll\rho(q) q^k\prod_{i=1}^{k} \left(\frac{B_i}{q}+1\right) .\]
When $q\leq Z$, clearly
\begin{equation}\label{eq:easylevel}
T(q,\B)\ll\rho(q) B_1\dots B_k.
\end{equation}

\begin{lemma}
Assume that $F\in\Z[\x]$ satisfies~\ref{a:doubleroot}. Then
\begin{equation}\label{eq:rhobounds}
 \rho(p^v)\ll \begin{cases}
 \hfil p^{-1}&\text{if }v=1,\\
 \hfil p^{-2}&\text{if }2\leq v\leq 2d,\\
 \hfil p^{-\frac{v}{d}}&\text{if }v>2d,
 \end{cases}
\end{equation}
where the implied constant depends at most on $F$.
\end{lemma}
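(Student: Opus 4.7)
The three bounds correspond to three distinct regimes, and I would attack each with its own counting technique. Throughout, all implied constants are allowed to depend on $F$, which lets me absorb the finitely many ``bad'' primes (those dividing the leading coefficient $c_0$ of $x_1^d$ in $F$, or those at which the geometric structure of $V(F)$ degenerates under reduction) via the trivial estimate $\rho(p^v)\leq 1$. For $v=1$, I would simply count: by~\ref{a:specialpoly} the reduction $\bar F\in\F_p[\x]$ has degree exactly $d$ in $x_1$, so fixing $(\bar x_2,\ldots,\bar x_k)\in\F_p^{k-1}$ the resulting one-variable polynomial has at most $d$ roots, giving $\#\{\bar F=0\}(\F_p)\leq dp^{k-1}$ and hence $\rho(p)\leq d/p$.

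For $2\leq v\leq 2d$, I would partition the solutions of $F(\x)\equiv 0\bmod p^v$ according to whether their reduction $\bar\x\in\F_p^k$ is a smooth or singular point of the hypersurface $V\coloneqq\{\bar F=0\}$. The multivariable Hensel lemma at each smooth zero contributes exactly $p^{(k-1)(v-1)}$ lifts to solutions mod $p^v$, and combined with $\#V(\F_p)\leq dp^{k-1}$ this bounds the smooth density by $dp^{-v}\leq dp^{-2}$. At each singular zero I would use only the trivial bound allowing all $p^{k(v-1)}$ possible lifts. The crucial geometric input is that assumption~\ref{a:doubleroot} forces $\dim\mathrm{Sing}(V)\leq k-2$: writing $F=c\prod_i F_i$ as the irreducible factorization in $\Q[\x]$, each $V(F_i)$ is geometrically reduced (we are in characteristic zero), so each $V(F_i)^{\mathrm{sing}}$ has codimension $\geq 2$ in $\AAA^k_{\Q}$, and each pairwise intersection $V(F_i)\cap V(F_j)$ with $i\neq j$ has codimension $\geq 2$ as well. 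For $p$ outside a finite bad set, standard slicing then yields $\#\mathrm{Sing}(V)(\F_p)\ll p^{k-2}$, contributing density $\ll p^{-2}$ and so $\rho(p^v)\ll p^{-2}$.

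For $v>2d$, the blanket bound $p^{k(v-1)}$ at singular reductions becomes wasteful, so I would switch to a one-variable slicing. Fix $(x_2,\ldots,x_k)\in(\Z/p^v\Z)^{k-1}$; by~\ref{a:specialpoly} and $p\nmid c_0$, the polynomial $g(x_1)\coloneqq F(x_1,x_2,\ldots,x_k)\in(\Z/p^v\Z)[x_1]$ has degree $d$ with unit leading coefficient. The classical root bound for one-variable polynomials modulo prime powers (via a Newton-polygon argument on the roots in $\bar\Q_p$, placing each solution within $p$-adic distance $p^{-v/d}$ of one of the $\leq d$ roots) gives $\#\{x_1\in\Z/p^v\Z:g(x_1)\equiv 0\bmod p^v\}\ll_d p^{v(1-1/d)}$. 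Summing over the $p^{v(k-1)}$ choices of the remaining coordinates and dividing by $p^{vk}$ yields $\rho(p^v)\ll p^{-v/d}$. The only step with real geometric content is the codimension-$2$ bound on $\mathrm{Sing}(V)$ in the middle regime, which is exactly where~\ref{a:doubleroot} is needed; everything else is a matter of selecting the correct off-the-shelf counting tool for each range.
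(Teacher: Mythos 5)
Your proof is a self-contained reconstruction of bounds that the paper simply cites: the published argument invokes Lang--Weil for $v=1$ and Lemmas~2.6 and 2.8 of~\cite{CKPS} for $v\geq 2$, so yours is a genuinely different (more explicit) route. The skeleton you chose is the natural one, and the middle regime is handled correctly: the Hensel contribution at smooth reductions gives density $\ll p^{-v}\leq p^{-2}$, the codimension-$\geq 2$ bound on the singular locus (from squarefreeness over $\Q$ plus spreading out) gives $\ll p^{-2}$ at singular reductions, and the finitely many bad $p$ are harmlessly absorbed because $\rho(p^v)\leq 1\leq p^2\cdot p^{-2}$ is a $v$-uniform statement.

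There is, however, a genuine gap in how you dispose of bad primes in the third regime. The blanket claim that ``all implied constants are allowed to depend on $F$\dots which lets me absorb the finitely many bad primes via the trivial estimate $\rho(p^v)\leq 1$'' cannot deliver the bound $\rho(p^v)\ll p^{-v/d}$ for a fixed prime $p\mid c_0$: the target decays as $v\to\infty$ while the trivial bound does not, so no choice of constant depending only on $F$ will do. Since your slicing argument for $v>2d$ explicitly assumes $p\nmid c_0$, the bad primes are simply not covered. The fix is to run the same Newton-polygon count without the unit-leading-coefficient hypothesis: if $v_p(c_0)=e$, the condition $v_p(g(x_1))\geq v$ forces $\sum_i v_p(x_1-\alpha_i)\geq v-e$, so some root satisfies $v_p(x_1-\alpha_i)\geq (v-e)/d$ and the residue count becomes $\ll_d p^{v(1-1/d)+e/d}$, i.e.\ $\rho(p^v)\ll_d p^{e/d}\cdot p^{-v/d}$. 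As $e$ is bounded and there are only finitely many such $p$, the extra factor $p^{e/d}$ is swallowed by the $F$-dependent constant, but this needs to be said rather than appealing to the trivial estimate.

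A secondary, more cosmetic point: the lemma as stated assumes only~\ref{a:doubleroot}, whereas both your $v=1$ and $v>2d$ arguments invoke~\ref{a:specialpoly}. This is harmless because a unimodular change of variables in $\GL_k(\Z)$ preserves $\rho(p^v)$ for every prime power and can be chosen to make the $x_1^d$-coefficient nonzero, but you should record that reduction; as written your argument proves the lemma only under the extra hypothesis. (For $v=1$ you could also bypass~\ref{a:specialpoly} entirely by applying Lang--Weil to the $(k-1)$-dimensional hypersurface $V(\bar F)$, which uses only that $F$ is nonconstant, matching what the paper does.)
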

\begin{proof}
Lang--Weil estimates~\cite{LangWeil} implies
\[\rho(p)\ll p^{-1}.\]
Under the assumption~\ref{a:doubleroot}, Lemma~2.6 and Lemma~2.8 in~\cite{CKPS} provide the upper bound
\[\rho(p^v)\ll \min\left\{p^{-2},p^{-\frac{v}{d}}\right\},\]
when $v\geq 2$.
\end{proof}

\begin{lemma}[{\cite[Lemma~2.5]{CKPS}}]\label{lemma:cheb}
Assume that $F\in\Z[\x]$ satisfies~\ref{a:doubleroot}.  Then 
\[\sum_{p\leq X}\rho(p)=r\log\log X+O(1),\]
where $r$ is the number of distinct irreducible factors of $F$ in $\Q[\x]$.
The implied constant depends only on $F$.
\end{lemma}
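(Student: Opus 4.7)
My plan is to reduce $\sum_p\rho(p)$ to sums over single $\Q$-irreducible factors of $F$, extract the main term via Lang--Weil on each factor, and conclude by an effective Chebotarev argument. First I would factor $F = F_1\cdots F_r$ into distinct $\Q$-irreducible polynomials, as permitted by~\ref{a:doubleroot}. Inclusion--exclusion on the event $p\mid F(\x)$ gives
\[
\rho(p) = \sum_{\emptyset \neq S\subseteq\{1,\dots,r\}} (-1)^{|S|+1} \rho_S(p), \qquad \rho_S(p) \coloneqq \frac{1}{p^k}\#\{\x\in\F_p^k: F_i(\x)\equiv 0 \text{ for all } i\in S\}.
\]
For $|S|\geq 2$ the variety $\bigcap_{i\in S}\{F_i=0\}\subset\AAA^k$ has codimension at least $2$, since the $F_i$ are pairwise coprime irreducibles in $\Q[\x]$, so a Lang--Weil dimension bound yields $\rho_S(p)\ll_F p^{-2}$ for all but finitely many $p$. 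These terms contribute $O(1)$ to $\sum_{p\leq X}\rho(p)$, reducing the task to estimating $\sum_{p\leq X}\rho_{\{i\}}(p)$ for each~$i$.

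For each $\Q$-irreducible $F_i$, I would invoke Lang--Weil~\cite{LangWeil} applied to the affine hypersurface $\{F_i=0\}$ to obtain
\[
\#\{F_i=0\}(\F_p) = c_i(p)\,p^{k-1} + O_F\bigl(p^{k-3/2}\bigr),
\]
where $c_i(p)$ is the number of geometrically irreducible components of $F_i\bmod p$ that are defined over $\F_p$. This yields $\rho_{\{i\}}(p) = c_i(p)/p + O(p^{-3/2})$, and the error contributes $O(1)$ when summed over all $p$. The lemma is thereby equivalent to the identity
\[
\sum_{p\leq X} \frac{c_i(p)}{p} = \log\log X + O_F(1)
\]
for every $\Q$-irreducible factor $F_i$.

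The main obstacle is this last displayed identity, and it is where $\Q$-irreducibility (as opposed to mere absolute irreducibility) of $F_i$ enters essentially. Let $L_i/\Q$ denote the minimal Galois extension over which every geometrically irreducible component of $F_i$ is defined, and let $\Sigma_i$ be the set of these components. Because $F_i$ is $\Q$-irreducible, $\Gal(L_i/\Q)$ acts transitively on $\Sigma_i$, and for primes $p$ unramified in $L_i$ with good reduction, $c_i(p)$ equals the number of fixed points of $\Frob_p$ acting on $\Sigma_i$. Burnside's lemma then gives that the average of $c_i(p)$ over $\Gal(L_i/\Q)$ equals the number of orbits, namely $1$. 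An effective Chebotarev density theorem applied to conjugacy classes of $\Gal(L_i/\Q)$ produces the Mertens-type identity $\sum_{p\leq X} c_i(p)\log p/p = \log X + O_F(1)$, after absorbing the finitely many ramified and bad-reduction primes into the error; partial summation then upgrades this to the claimed $\log\log X + O_F(1)$ estimate. Summing over $i=1,\dots,r$ yields $\sum_{p\leq X}\rho(p) = r\log\log X + O(1)$.
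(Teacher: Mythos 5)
The paper does not give its own proof here; it simply cites~\cite[Lemma~2.5]{CKPS}. Your argument is correct and follows the standard route for this kind of statement: inclusion--exclusion over the distinct $\Q$-irreducible factors $F_1,\dots,F_r$ (the cross terms drop out because pairwise intersections have codimension $\geq 2$, hence contribute $O(p^{-2})$ each), Lang--Weil to write $\rho_{\{i\}}(p) = c_i(p)/p + O(p^{-3/2})$ with $c_i(p)$ the number of $\F_p$-rational geometrically irreducible components of $F_i\bmod p$, and then the Chebotarev/Burnside identification of $c_i(p)$ with $\lvert\mathrm{Fix}(\Frob_p)\rvert$ on the set of geometric components, whose average is $1$ by transitivity of the Galois action (which is exactly where $\Q$-irreducibility enters). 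Two small remarks that would tighten the write-up: one should note that the codimension statement for $\bigcap_{i\in S}\{F_i=0\}$ persists after reduction modulo $p$ for all but finitely many $p$ (the bad primes of the flat model), which you implicitly assume; and the appeal to \emph{effective} Chebotarev is unnecessary, since the error is allowed to depend on $F$ --- the classical Chebotarev--Mertens theorem $\sum_{p\leq X,\,\Frob_p\in C}p^{-1} = \tfrac{|C|}{|G|}\log\log X + O_L(1)$ already gives the displayed identity directly without passing through $\sum c_i(p)\log p/p$ and partial summation.
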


\subsection{Splitting into cases}
To prove Theorem~\ref{theorem:main1}, we will largely follow the proof of Théorème~2 in work of Fouvry--Nair--Tenenbaum~\cite[Section~3]{FNT} with some necessary generalisations.

Define
\[
r(n,\B)\coloneqq\#\left\{\x\in\Z^k: |x_i|\leq B_i,\ F(\x)=n\right\}.\]
Write
\[V\coloneqq B_1\dots B_k.\]
Take the following parameters
\begin{align}\label{eq:parameters}
t&\coloneqq (\log B_1)^2,&
T&\coloneqq \left(\frac{B_1}{\log B_1}\right)^{1-\frac{ d-2}{\beta}},&
T_1&\coloneqq Y(\log Y)^{1-\delta},
\end{align}
where $\delta\coloneqq \delta_d(r)$ as in~\eqref{eq:defdelta}.
We want to bound
\begin{equation}\label{eq:mainsum}
\#\left\{\x\in\Z^k: |x_i|\leq B_i,\ |F(\x)|\geq \rad(F(\x))^{\beta}\right\}
=\sum_{\substack{n\in\Z \\ |n|\geq \rad(n)^{\beta}}} r(n,\B).\end{equation}
Given $n\in\Z$, we write
\[
n_t\coloneqq \prod_{p\leq t} p^{v_p(n)}.
\]
We will split the sum~\eqref{eq:mainsum} into the following cases:
\begin{enumerate}[label=(C{\arabic*})]
\item $|n|\leq \frac{B_1}{\log B_1}$. \label{C1}
\item $n_t\geq T$. \label{C2}
\item $p^2\mid n$ for some $t<p\leq T_1$. \label{C:medprimes}
\item $d\geq 3$ and $p^{d-1}\mid n$ for some $p>T_1$.\label{C:largeprimes}
\end{enumerate}

We now deduce that~\ref{C:largeprimes} is enough to cover the case when~\ref{C1},~\ref{C2},~\ref{C:medprimes} all fail.
Assume that $|n|> \frac{B_1}{\log B_1}$, $n_t<T$, $p^2\nmid n$ for every $t<p\leq T_1$. If $d\leq 2$, $p^2\mid n\ll Y^d$ implies that $p\ll Y$, so this contradicts with $p>T_1$ when $Y$ is large enough.
Suppose $d\geq 3$ and recall that $|n|\geq \rad(n)^{\beta}$.
If $v_p(n)\leq d-2$ for all $p>t$, then 
\[\frac{|n|}{T}<\frac{|n|}{n_t}\leq \prod_{p\geq t} p^{ d-2}\leq\rad(n)^{d-2}\leq |n|^{\frac{ d-2}{\beta}},\]
which contradicts with $|n|> \frac{B_1}{\log B_1}$ and $T=(\frac{B_1}{\log B_1})^{1-\frac{ d-2}{\beta}}$.
Therefore we end up with $v_p(n)\geq d-1$ for some prime $p>T_1$, which corresponds to~\ref{C:largeprimes}.

\subsection{Case~\texorpdfstring{\ref{C1}}{C1}}
\begin{lemma} \label{lemma:C1}
Assume that $F\in\Z[\x]$ satisfies~\ref{a:specialpoly}. Let $F\in\Z[\x]$ be a non-constant polynomial.
For $B_1,\dots,B_k\geq 3$, we have
\[\sum_{|n|\leq B_1/\log B_1}r(n,\B)\ll\frac{V}{\log B_1},\]
where the implied constant depends at most on $F$.
\end{lemma}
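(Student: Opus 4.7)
\textbf{Proof proposal for Lemma~\ref{lemma:C1}.}

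The strategy is to bound $r(n,\B)$ uniformly in $n$ by slicing in the $x_1$ direction, then sum over the $O(B_1/\log B_1)$ permissible values of $n$. The key input is assumption~\ref{a:specialpoly}: since $F$ has total degree $d$, the coefficient of $x_1^d$ in $F(\x)$ is necessarily a constant $a_d\in\Z$ (any factor depending on $x_2,\dots,x_k$ would push the total degree above $d$), and by~\ref{a:specialpoly} this $a_d$ is nonzero. Hence, regarded as an element of $(\Z[x_2,\dots,x_k])[x_1]$, $F$ has degree exactly $d$ and a nonzero integer leading coefficient independent of $(x_2,\dots,x_k)$.

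Now I would fix an arbitrary $(x_2,\dots,x_k)\in\Z^{k-1}$ with $|x_i|\leq B_i$ and regard
\[
P(x_1)\coloneqq F(x_1,x_2,\dots,x_k)\in\Z[x_1]
\]
as a univariate polynomial of degree exactly $d$. For any $n\in\Z$, the equation $P(x_1)=n$ has at most $d$ integer roots. Therefore, summing over the at most $\prod_{i=2}^{k}(2B_i+1)$ choices of $(x_2,\dots,x_k)$, we obtain the uniform bound
\[
r(n,\B)\leq d\prod_{i=2}^{k}(2B_i+1)\ll \frac{V}{B_1},
\]
where the implicit constant depends only on $d$ and $k$, hence only on $F$.

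Finally, summing over $n$ with $|n|\leq B_1/\log B_1$ — a range containing $\ll B_1/\log B_1$ integers, since $B_1\geq 3$ forces $B_1/\log B_1\geq 1$ — gives
\[
\sum_{|n|\leq B_1/\log B_1}r(n,\B)\ll \frac{B_1}{\log B_1}\cdot\frac{V}{B_1}=\frac{V}{\log B_1},
\]
which is the desired estimate. There is no substantive obstacle: the entire proof reduces to exploiting the fact that~\ref{a:specialpoly} makes $F$ monic (up to a fixed nonzero constant) in $x_1$, turning the problem into a trivial fibre count over slices.
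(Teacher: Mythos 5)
Your proof is correct and coincides with the paper's argument: fix $(x_2,\dots,x_k)$ and $n$, use~\ref{a:specialpoly} to ensure the resulting univariate polynomial in $x_1$ has degree exactly $d$ (hence at most $d$ roots), then multiply by the $O(V/B_1)$ choices of the remaining coordinates and the $O(B_1/\log B_1)$ choices of $n$.
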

\begin{proof}
Fixing $x_2,\dots, x_k$ and $n$, we obtain a degree $d$ polynomial in $x_1$, so there are at most $d$ possible $x_1$.
The number of choices of $x_2,\dots, x_k$ is $O(V/B_1)$ and the number of choices of $n$ is $O(B_1/\log B_1)$.
\end{proof}

\subsection{Case~\texorpdfstring{\ref{C2}}{C2}}
\begin{lemma} \label{lemma:smoothpart}
Assume that $F\in\Z[\x]$ satisfies~\ref{a:doubleroot}. 
For $B_1,\dots,B_k,T\geq 3$ and $3\leq t\leq Z$, we have
\begin{equation}\label{eq:C2}
\sum_{\substack{n\\n_t\geq T}}r(n,\B)\ll \frac{V}{\log T}\left(\log t+\frac{t^{1+\frac{1}{d}}}{Z^{\frac{1}{d}}}\frac{(\log X)^2}{\log t}\right),\end{equation}
where the implied constant depends at most on $F$.
\end{lemma}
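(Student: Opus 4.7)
The plan is to apply a Rankin-type inequality to reduce the count to a sum of level-of-distribution contributions. Since the characteristic function satisfies $\mathbf{1}[F(\x)_t\geq T]\leq\log F(\x)_t/\log T$ (using $F(\x)_t\geq 1$ and $\log T>0$), the left-hand side is bounded by
\[
\frac{1}{\log T}\sum_\x \log F(\x)_t=\frac{1}{\log T}\sum_{p\leq t}\log p\sum_{v\geq 1}T(p^v,\B),
\]
obtained by expanding $\log F(\x)_t=\sum_{p\leq t}v_p(F(\x))\log p$, writing $v_p(n)=\sum_{v\geq 1}\mathbf{1}[p^v\mid n]$, and interchanging the orders of summation.

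I would then split the $v$-sum at the threshold $p^v=Z$. For the range $p^v\leq Z$, the easy level-of-distribution bound~\eqref{eq:easylevel} gives $T(p^v,\B)\ll\rho(p^v)V$; the decay estimates in~\eqref{eq:rhobounds} imply $\sum_{v\geq 1}\rho(p^v)\ll 1/p$, and summing over primes $p\leq t$ via Mertens' theorem produces the main contribution of order $V\log t/\log T$. For the complementary range $p^v>Z$ (with $p^v\leq X$ automatic, since $|F(\x)|\leq X$ on the box), I would use the general upper bound $T(p^v,\B)\leq \rho(p^v)\prod_i(B_i+p^v)$ combined with the decay $\rho(p^v)\ll p^{-v/d}$, which is available because $p\leq t\leq Z$ forces $v$ to be large once $p^v>Z$. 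Summing the resulting expression as a geometric series in $v$ up to $p^v\leq X$ and then over primes $p\leq t$ should yield the claimed error term of size $V t^{1+1/d}(\log X)^2/(Z^{1/d}\log t\log T)$.

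The main obstacle lies in the large-$p^v$ regime. The crude expansion $\prod_i(B_i+p^v)\leq V(1+p^v/Z)^k$ loses substantial information once $p^v$ exceeds individual $B_i$, and careful bookkeeping of the intervals $[B_j,B_{j+1})$ in which $p^v$ lies, together with the trivial fallback $T(p^v,\B)\leq V$ when $p^v$ becomes very large, will be required to avoid overshooting. The factor $t^{1+1/d}/Z^{1/d}$ then emerges naturally from evaluating the geometric series of $p^{-v/d}$ starting at $p^v=Z$, multiplied by the Chebyshev-type contribution $\sum_{p\leq t}\log p\ll t$ from the sum over primes, while the $(\log X)^2$ absorbs the logarithmic length of the joint summation range in $p$ and $v$.
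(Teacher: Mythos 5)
Your Rankin-trick opening, the split of the inner sum at $p^v\leq Z$, and the treatment of the small-modulus range $p^v\leq Z$ all match the paper's argument. The gap is in the range $p^v>Z$: your diagnosis of the obstacle there is accurate, but the proposed workaround will not close it. The bound $T(p^v,\B)\leq\rho(p^v)\prod_i(B_i+p^v)$ combined with $\rho(p^v)\ll p^{-v/d}$ does not sum as a decaying geometric series: once $p^v\geq\max_i B_i$, the factor $\prod_i(B_i+p^v)$ has size $\asymp p^{vk}$, so the summand is $\asymp p^{v(k-1/d)}$, which grows in $v$. The trivial fallback $T(p^v,\B)\leq V$ removes the growth but also destroys the essential $Z^{-1/d}$ saving; summing that over primes $p\leq t$ and the $O(\log X/\log p)$ admissible powers gives a term of order $Vt\log X/\log t$, which exceeds the right-hand side of the lemma. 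No bookkeeping of the intervals in which $p^v$ lies repairs this, because the underlying issue is that you are trying to bound $T(p^v,\B)$ at a modulus $p^v$ that has escaped the box.

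The missing idea is monotonicity in the modulus. For $v$ with $p^v>Z$, let $m_p$ denote the largest integer with $p^{m_p}\leq Z$, which satisfies $m_p\geq 1$ since $p\leq t\leq Z$. Then $T(p^v,\B)\leq T(p^{m_p},\B)$ trivially, because $p^v\mid F(\x)$ implies $p^{m_p}\mid F(\x)$. Now $p^{m_p}\leq Z$ is an admissible level, so \eqref{eq:easylevel} gives $T(p^{m_p},\B)\ll\rho(p^{m_p})V$, and \eqref{eq:rhobounds} together with $p^{m_p}\leq Z<p^{m_p+1}$ yields $\rho(p^{m_p})\ll p^{-m_p/d}\leq (p/Z)^{1/d}$. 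This uniform bound $T(p^v,\B)\ll(p/Z)^{1/d}V$ holds for every $v$ with $Z<p^v\leq X$, of which there are $O(\log X/\log p)$; summing over these $v$ and then over $p\leq t$ with the outer $\log p$ produces the claimed $V(\log X)^2\,t^{1+1/d}/(Z^{1/d}\log t)$. So the correct move is not to make $\rho(p^v)$ win against $\prod_i(B_i+p^v)$ for large $v$, but to freeze the modulus at the largest admissible power $p^{m_p}\leq Z$ and merely count how many larger powers there are.
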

\begin{proof}
We have
\[\sum_{\substack{n\in\Z \\n_t\geq T}}r(n,\B)\leq \sum_{n}r(n,\B)\frac{\log n_t}{\log T},\]
so we want to estimate
\begin{equation}\label{eq:sumrlognt}
 \sum_{n\in\Z }r(n,\B)\log n_t
 =\sum_{p\leq t}\sum_{\substack{v\geq 1\\p^v\leq X}}T(p^v,\B)\log p^v
=\sum_{p\leq t}\log p\sum_{\substack{v\geq 1\\p^v\leq X}}T(p^v,\B)v.
\end{equation}
Split the inner sum according to whether $p^v\leq Z$ or $p^v>Z$.
If $p^v\leq Z$, we apply~\eqref{eq:easylevel},
\[T(p^v,\B)\ll\rho(p^v) V.\]
By~\eqref{eq:rhobounds}, we have
\[
\sum_{\substack{v\geq 1\\p^v\leq Z}}\rho(p^v)v\ll \frac{1}{p}+\sum_{2\leq v\leq 2d}\frac{v}{p^2}+\sum_{v>2d}\frac{v}{p^{\frac{v}{d}}}
\ll \frac{1}{p}+\frac{d^2}{p^2}+ \frac{d^2}{p^2(\log p)^2}
\ll \frac{1}{p}
,
\]
so
\begin{equation}\label{eq:subsumT1}
\sum_{\substack{v\geq 1\\p^v\leq Z}}T(p^v,\B)v\ll
\frac{1}{p}V.
 \end{equation}

If $p^v>Z$, let $m_p$ denote the integer such that $p^{m_p}\leq Z<p^{m_p+1}$. Since $t\leq Z$, $m_p$ must be positive.
We have by~\eqref{eq:easylevel},
\[T(p^v,\B)\leq T(p^{m_p},\B)\ll\rho(p^{m_p}) V,\]
and by~\eqref{eq:rhobounds},
\[ \rho(p^{m_p})\ll p^{-\frac{m_p}{d}}
\leq 
p^{-\frac{1}{d}(\frac{\log Z}{\log p}-1)}
\leq 
(p/Z)^{\frac{1}{d}}.
\]
Therefore
\begin{equation}\label{eq:subsumT2}\sum_{\substack{v\geq 1\\Z<p^v\leq X}}T(p^v,\B)v\ll
V\sum_{\substack{v\geq 1\\Z<p^v\leq X}}\frac{p^{\frac{1}{d}}v}{Z^{\frac{1}{d}}}
\ll
V\frac{p^{\frac{1}{d}}}{Z^{\frac{1}{d}}}\left(\frac{\log X}{\log p}\right)^2.
 \end{equation}

Combining the two cases~\eqref{eq:subsumT1} and~\eqref{eq:subsumT2}, we can bound~\eqref{eq:sumrlognt} by
\[\ll V\sum_{p\leq t}\left(\frac{\log p}{p}+\frac{p^{\frac{1}{d}}(\log X)^2}{Z^{\frac{1}{d}}\log p}\right)
\ll V\left(\log t+\frac{t^{1+\frac{1}{d}}}{Z^{\frac{1}{d}}}\frac{(\log X)^2}{\log t}\right).\]
Therefore we have shown~\eqref{eq:C2} as required.
\end{proof}

\subsection{Case~\texorpdfstring{\ref{C:medprimes}}{C3}}
\begin{lemma}\label{lemma:C3}
Assume that $F$ and $\frac{\partial }{\partial x_1}F$ have no common factor in the ring $\Z[\x]$. Then for all $B_1,\dots,B_k\geq 3$, we have
\begin{equation}\label{eq:goodlevel}
 \sum_{\substack{n\in\Z \\p^2\mid n\text{ for some }t<p\leq T_1}}r(n,\B)\ll\frac{V\log V}{t\log t}+\frac{V\log V}{Z}+\frac{T_1}{\log T_1}\prod_{i=2}^k B_i,
\end{equation}
where the implied constant depends at most on $F$.
\end{lemma}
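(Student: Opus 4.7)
The plan is to bound the left-hand side by
\[\sum_{t<p\leq T_1}N(p),\qquad N(p):=\#\bigl\{\x\in\Z^k:|x_i|\leq B_i,\ p^2\mid F(\x)\bigr\},\]
and, for each prime $p$, to split $N(p)$ according to whether $\x$ is \emph{smooth} (meaning $p\nmid\frac{\partial F}{\partial x_1}(\x)$) or \emph{singular} (meaning $p\mid\frac{\partial F}{\partial x_1}(\x)$) modulo $p$. This is the standard ``squarefree detection'' dichotomy used in \cite{FNT}.

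For the singular contribution, summed over all $p>t$, any contributing $\x$ satisfies $p\mid F(\x)$ and $p\mid\frac{\partial F}{\partial x_1}(\x)$ simultaneously for some $p>t$. By hypothesis these two polynomials share no common factor in $\Z[\x]$, so the geometric sieve (Lemma~\ref{lemma:geoms}) with $M=t$, $f_1=F$, $f_2=\frac{\partial F}{\partial x_1}$, and $H$ an upper bound on the coefficients of $F$, directly yields
\[\ll\frac{V\log V}{t\log t}+\frac{V\log V}{Z},\]
accounting for the first two terms on the right-hand side of~\eqref{eq:goodlevel}.

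For the smooth contribution I would fix $p$ and an integer tuple $(x_2,\ldots,x_k)$ with $|x_i|\leq B_i$, and consider the one-variable polynomial $g(x_1):=F(x_1,x_2,\ldots,x_k)\in\Z[x_1]$. By assumption~\ref{a:specialpoly} the $x_1^d$-coefficient of $F$ is a fixed non-zero integer $c$, so for every $p\nmid c$ the reduction $g\bmod p$ is of degree exactly $d$; Hensel's lemma then gives at most $d$ smooth roots $x_1\pmod{p^2}$ of $g$, each contributing $\ll B_1/p^2+1$ integers in $[-B_1,B_1]$. Summing over the $\ll\prod_{i=2}^kB_i$ tuples $(x_2,\ldots,x_k)$ and over primes $t<p\leq T_1$ gives
\[\ll V\sum_{t<p\leq T_1}\frac{1}{p^2}+\prod_{i=2}^kB_i\sum_{t<p\leq T_1}1\ll\frac{V}{t}+\frac{T_1}{\log T_1}\prod_{i=2}^kB_i,\]
where $V/t$ is absorbed into $V\log V/(t\log t)$ (since $t\leq V$), and the $O(1)$ bad primes dividing $c$ contribute a bounded amount that is absorbed similarly.

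The main technical point is the clean split of the squarefull divisibility $p^2\mid F(\x)$ into a smooth part handled by a direct Hensel-type count and a singular part amenable to the geometric sieve: assumption~\ref{a:specialpoly} forces $\deg_{x_1}g=d$ for every specialization, keeping the per-prime smooth count at $O(d)$, while the hypothesis that $F$ and $\frac{\partial F}{\partial x_1}$ have no common factor in $\Z[\x]$ is precisely what is needed to invoke Lemma~\ref{lemma:geoms} on the singular contribution. Combining the two contributions yields~\eqref{eq:goodlevel}.
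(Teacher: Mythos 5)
Your proof is correct and follows essentially the same route as the paper: split $p^2\mid F(\x)$ into the smooth case ($p\nmid\frac{\partial F}{\partial x_1}(\x)$), handled by a Hensel lift combined with a one-variable root count, and the singular case ($p\mid F(\x)$ and $p\mid\frac{\partial F}{\partial x_1}(\x)$), handled by the geometric sieve. One small stylistic difference: to bound the number of smooth roots mod $p$ by $d$ for \emph{every} prime $p$, the paper avoids any case distinction on whether $p$ divides the leading coefficient by observing that $\frac{\partial}{\partial x_1}F(x_1,\x')\not\equiv 0\bmod p$ already forces the specialized polynomial mod $p$ to have positive degree; your appeal to assumption~\ref{a:specialpoly} plus an $O(1)$ set of bad primes reaches the same conclusion, but the "absorbed similarly" remark elides the point that the bad primes are in fact covered by the same Hensel argument (if $g\bmod p$ were identically zero then $g'\bmod p$ would be too, so there would be no smooth roots). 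Your absorption of the term $V/t$ into $V\log V/(t\log t)$ uses $t\leq V$, which holds for the paper's choice $t=(\log B_1)^2$, whereas the paper's direct estimate $\sum_{p>t}p^{-2}\ll(t\log t)^{-1}$ sidesteps the issue. These are minor; the argument is sound.
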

\begin{proof}
We split the sum according to whether $\frac{\partial }{\partial x_1}F\bmod p$ is non-zero
\[ \sum_{\substack{n\in\Z \\p^2\mid n\text{ for some }t<p\leq T_1}}r(n,\B)\ll \sum_{t<p\leq T_1}S_1(p)+S_2,\]
where
\begin{align*}
S_1(p)&\coloneqq\#\left\{\x\in \Z^k: |x_i|\leq B_i,\ F(\x)\equiv 0\bmod p^2,\ \frac{\partial }{\partial x_1}F(\x)\not\equiv 0\bmod p\right\},\\
S_2&\coloneqq\#\left\{\x\in \Z^k: |x_i|\leq B_i,\ F(\x)\equiv \frac{\partial }{\partial x_1}F(\x)\equiv 0\bmod p\text{ for some }p>t\right\}.
\end{align*}

We first bound $S_1(p)$. Viewing $x_2,\dots,x_k$ as fixed, we can Hensel lift each solution $t\bmod p$ such that $F(t,x_2,\dots,x_k)\equiv 0\bmod p,\ \frac{\partial }{\partial t}F(t,x_2,\dots,x_k)\not\equiv 0\bmod p$ to a unique solution $x_1\bmod p^2$ such that $F(\x)\equiv 0\bmod p^2$. Since $\frac{\partial }{\partial t}F(t,x_2,\dots,x_k)\not\equiv 0\bmod p$, the degree of $F(t,x_2,\dots,x_k)\bmod p$ in $t$ must be positive, so we can simply bound the number of such $t\bmod p$ by the degree of $F$. We conclude that
\[
S_1(p)\ll \left(\frac{B_1}{p^2}+1\right) \prod_{i=2}^k B_i.
\]
Summing over $t<p\leq T_1$, we have
\begin{equation}\label{eq:bdS1}\sum_{t<p\leq T_1}S_1(p)\ll \sum_{t<p\leq T_1}\left(\frac{B_1}{p^2}+1\right)\prod_{i=2}^k B_i\ll 
\left(\frac{B_1}{t\log t}+\frac{T_1}{\log T_1}\right)\prod_{i=2}^k B_i.\end{equation}

For $S_2$, apply Lemma~\ref{lemma:geoms} to get
\begin{equation}\label{eq:bdS2}
S_2\ll \frac{V\log V}{t\log t}+\frac{V\log V}{Z}.
\end{equation}
Summing the upper bounds from~\eqref{eq:bdS1} and~\eqref{eq:bdS2} gives~\eqref{eq:goodlevel}.
\end{proof}

\subsection{Case~\texorpdfstring{\ref{C:largeprimes}}{C4}}
We will prove Lemma~\ref{lemma:C4} in this subsection, which handles the remaining case~\ref{C:largeprimes}.
\begin{lemma}\label{lemma:C4}
Suppose $d\geq 3$.
Assume that $F\in\Z[\x]$ satisfies~\ref{a:doubleroot} and~\ref{a:specialpoly}. Let $r$ be the number of irreducible factors of $F$ over $\Q$.
For all $B_1,\dots,B_k\geq 3$ satisfying~\eqref{eq:loggrowth} and~\eqref{eq:loglb}, we have
\[\sum_{\substack{n\in\Z\setminus\left\{0\right\} \\ p^{d-1}\mid n\text{ for some }p>T_1}}r(n,\B)
\ll V\frac{\left(\log\log Z\right)^{d-r} }{(\log Z)^{\delta} },\]
where $T_1=Y(\log Y)^{1-\delta}$ and $\delta= \delta_d(r)$ as defined in~\eqref{eq:defdelta}. The implied constant depends at most on $F$.
\end{lemma}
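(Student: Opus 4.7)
I adapt the strategy of Fouvry--Nair--Tenenbaum to the multivariate setting, handling the very large repeated prime factor $p^{d-1}$ with $p > T_1$.

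\emph{Reduction to a sum over primes.} Since $|F(\x)| \ll Y^d$ for $\x$ in the box, any $p$ with $p^{d-1}\mid F(\x)$ satisfies $p \leq c Y^{d/(d-1)}$. Moreover, for $d \geq 3$, two distinct primes $p_1,p_2 > T_1$ with $p_j^{d-1}\mid F(\x)$ would force $(p_1p_2)^{d-1} \leq |F(\x)|^{2} \ll Y^{2d}$, hence $p_1p_2 \ll Y^{2d/(d-1)}$, which contradicts $T_1 \gg Y$ as soon as $Y$ is large. Hence each $\x$ corresponds to at most one such prime, and
\[
\sum_{\substack{n\in\Z\setminus\{0\}\\\exists p>T_1:\,p^{d-1}\mid n}} r(n,\B) \ll \sum_{T_1<p\leq cY^{d/(d-1)}} T(p^{d-1},\B).
\]

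\emph{Singular stratum via the geometric sieve.} Choose an index $i$ as in~\ref{a:codim2}, so $B_i=Y$ and $F,\,\partial F/\partial x_i$ have no common factor in $\Z[\x]$. The contribution of those $\x$ for which $\partial F/\partial x_i(\x)\equiv 0\bmod p$ for some $p>T_1$ is estimated by Lemma~\ref{lemma:geoms} with $f_1=F,\,f_2=\partial F/\partial x_i$ and $M=T_1$, yielding $\ll V\log V/(T_1\log T_1)+V\log V/Z$. Substituting $T_1=Y(\log Y)^{1-\delta_d(r)}$ and applying~\eqref{eq:loggrowth} shows this is absorbed into the target bound $V(\log\log Z)^{d-r}/(\log Z)^{\delta_d(r)}$.

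\emph{Smooth stratum via Hensel lifting and the larger sieve.} On the complementary set, $\partial F/\partial x_i(\x)\not\equiv 0\bmod p$. By~\ref{a:specialpoly}, $F$ regarded as a polynomial in $x_i$ with $\x'=(x_j)_{j\ne i}$ fixed has full degree $d$, so Hensel's lemma pins $x_i\bmod p^{d-1}$ to one of at most $d$ lifts of the mod-$p$ roots of $F(\,\cdot\,,\x')$. Since $p^{d-1}>Y\geq B_i$, at most $d$ integer values of $x_i\in[-B_i,B_i]$ are compatible with each pair $(\x',p)$. I then run the larger sieve (Lemma~\ref{lemma:largersieve}) in the $x_i$-variable with sieving set $\mathcal{S}=\{p^{a}:T_1<p\leq Q\}$, where $a\mid(d-1)$ is chosen optimally among the $\tau=\tau(d-1)$ divisors and $Q$ is a parameter to be balanced. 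The relevant sieve sums are evaluated via Lemma~\ref{lemma:cheb} (which contributes the leading term $r\log\log Q$) and Lemma~\ref{lemma:T}; summing the resulting bound over $\x'$ in the box produces the factor $(\log\log Z)^{d-r}/(\log Z)^{\delta_d(r)}$.

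\emph{Main obstacle.} The delicate part is this final sieve step: one must choose $Q$ and the divisor $a$ so that the denominator $\sum_{q\in\mathcal{S}}\Lambda(q)/\#\Omega_q - \log X$ in~\eqref{eq:largersieve} is simultaneously positive and as large as possible relative to the numerator $\sum_{q\in\mathcal{S}}\Lambda(q)-\log X$. It is exactly this positivity requirement that forces the hypothesis $\tau>(d-1)/(d-r)$: it guarantees that the main contribution $r\log\log Q$ from Lemma~\ref{lemma:cheb} outweighs the unavoidable loss of $\log X=\log B_i$, producing a strictly positive exponent $\delta_d(r)$. The power $d-r$ on $\log\log Z$ arises from iterating Lemma~\ref{lemma:T} against the multiplicative structure of $\rho$. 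Combining this sieve bound with the singular contribution from the geometric sieve completes the proof.
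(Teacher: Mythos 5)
Your reduction to bounding, for each fixed $\x'=(x_j)_{j\neq i}$, the number of $x_i$ with $p^{d-1}\mid F(x_i,\x')$ for some $p>T_1$ is fine, and the geometric-sieve treatment of the stratum where $p\mid \frac{\partial}{\partial x_i}F(\x)$ is harmless (though note the lemma only assumes~\ref{a:doubleroot} and~\ref{a:specialpoly}; hypothesis~\ref{a:codim2} is not available here, and the paper's argument for this case does not need it). The genuine gap is the final sieve step. The larger sieve (Lemma~\ref{lemma:largersieve}) bounds the number of integers lying in prescribed sets $\Omega_q$ for \emph{all} $q\in\mathcal{S}$; but an exceptional $x_i$ in your problem satisfies a divisibility condition at \emph{one} unspecified prime $p>T_1$ only, so the exceptional set is not of the required form, and you never say what $\Omega_q$ is. Moreover your moduli $p^{a}$ with $p>T_1=Y(\log Y)^{1-\delta}$ are at least comparable to (indeed larger than) the length of the interval $|x_i|\leq B_i\leq Y$, so even a correctly posed sieve at these moduli cannot gain anything; and the alternative of a union bound over $p\in(T_1,cY^{d/(d-1)}]$ using the ``at most $d$ values of $x_i$ per $(\x',p)$'' count loses a factor of order $Y^{1/(d-1)}/\log Y$ and is far too weak. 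The claim that Lemma~\ref{lemma:cheb} supplies a term $r\log\log Q$ in this setting is also unsupported: that lemma concerns $\sum_{p\leq X}\rho(p)$ for the multivariate $F$ over small primes, not root counts of the specialisations $F(\cdot,\x')$ at primes exceeding $T_1$. Finally, the hypothesis $\tau>\frac{d-1}{d-r}$ does not arise from a positivity requirement in the larger sieve; it only guarantees that $\delta_d(r)>0$, while positivity of the sieve denominator is arranged by the choice of the sieving level.

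What is missing is the key structural idea. Since $p^{d-1}\mid n$ with $p>T_1$, one writes $n=mz^{d-1}$ with $m$ being $(d-1)$-th power free, so that $|m|\leq X/T_1^{d-1}$ is \emph{small} and the condition becomes the exact equation $F_{\mathbf{b}}(x)=mz^{d-1}$ as in~\eqref{eq:C4bound1}. This equation imposes a congruence condition modulo \emph{every} small prime $p\leq P$: the point $x$ must lie in $\Omega_p=\{x\bmod p: F_{\mathbf{b}}(x)\equiv mz^{d-1}\bmod p\text{ for some }z\}$, whose density is roughly $1/\gcd(d-1,p-1)$ by Lang--Weil, and it is to this system that the larger sieve is applied (Lemma~\ref{lemma:largerapp}), with Menon's identity producing the exponent $\frac{1}{\tau}$ in $(B_1/m)^{1/\tau}$. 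One then sums over $\mathbf{b}$ using the large sieve (Lemma~\ref{lemma:largeapp}) and over $m$ using multiplicative-function estimates (Wirsing and Lemma~\ref{lemma:smoothapp}); this is where the factors $(\log\log Z)^{d-r}$ and $(\log Y)^{d-1}$ and the balancing that yields $\delta_d(r)$ actually come from. Without this conversion of a large-prime-power divisibility into a small-modulus power condition, your sieve step does not close, so the proposal has a genuine gap at its central point.
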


For $\mathbf{b}\coloneqq (b_2,\dots, b_k)\in\Z^{k-1}$,
define $F_{\mathbf{b}}\in\Z[x]$ by
\[F_\mathbf{b}(x)\coloneqq F(x,\mathbf{b}) =F(x,b_2,\dots, b_k).\]
Given a non-zero integer $n$, we may factorise $n$ as $mz^{d-1}$, where $m$ is $(d-1)$-th power free.
If $p^{d-1}\mid n$ for some $p>T_1$, then $p\mid z$, so $|m|\leq \frac{|n|}{T_1^{d-1}}$.
Therefore we have the upper bound
\begin{multline}\label{eq:C4bound1}
\sum_{\substack{n\in\Z\setminus\left\{0\right\} \\ p^{d-1}\mid n\text{ for some }p>T_1}}r(n,\B)\\
\leq \sum_{0<|m|\leq \frac{X}{T_1^{d-1}}}\mu^2(|m|)\sum_{\substack{\mathbf{b}\in\Z^{k-1}\\ |b_i|\leq B_i}}\#\left\{x\in\Z:
\begin{array}{l}
|x|\leq B_1,\\ F_{\mathbf{b}}(x)=mz^{d-1}\text{ for some }z\in\Z
\end{array}\right\}.
\end{multline}

\begin{lemma}\label{lemma:largerapp}
Suppose $F(x)\in\Z[x]$ has degree $d\geq 3$ and let $\ell$ be a positive integer coprime to $d$. Let $m$ be a positive integer, and \[\varrho(m)\coloneqq \#\left\{x\in \Z/m\Z: F(x)\equiv 0\bmod m\right\}.\]
Then for all $B\geq m$, we have
\begin{multline*}
\#\left\{x\in\Z :|x|\leq B,\ F(x)= mz^{\ell}\text{ for some }z\in\Z\right\}\\
\ll
\varrho(m)\left(\frac{B}{m}\right)^{\frac{1}{\tau(\ell)}}\exp\left(\frac{1}{\tau(\ell)}\sum_{\substack{p\mid m}}\frac{\log p}{p}\gcd(\ell,p-1)\right),
\end{multline*}
where the implied constant depends only on leading coefficient and degree of $F$.
\end{lemma}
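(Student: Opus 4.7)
The plan is to split the count according to the residue class $a\bmod m$ with $F(a)\equiv 0\bmod m$; there are $\varrho(m)$ such classes, and in each one I would write $x=a+mt$ to reduce to counting $t\in\Z$ with $|t|\leq 2B/m$ such that $F(a+mt)/m\in\Z$ is an $\ell$-th power. The main task then is to bound the count per residue class by $(B/m)^{1/\tau(\ell)}\exp\!\bigl(\tfrac{1}{\tau(\ell)}\sum_{p\mid m}\tfrac{\log p}{p}\gcd(\ell,p-1)\bigr)$, after which summing over the $\varrho(m)$ classes yields the claimed estimate.

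For each prime $p\nmid m\ell\cdot\disc(F)$, the condition that $F(a+mt)/m$ is an $\ell$-th power modulo $p$ forces $F(a+mt)\bmod p$ into the coset $m\cdot(\F_p^{*})^{\ell}\cup\{0\}$, which has cardinality $1+(p-1)/h(p)$, where $h(p):=\gcd(\ell,p-1)$. I would then invoke the Weil bound for character sums $\sum_{t}\chi(F(a+mt))$ with $\chi$ of order dividing $h(p)$, combined with orthogonality; this is admissible because $\gcd(d,\ell)=1$ prevents $F$ from being a constant times a perfect $e$-th power for any $e\mid\ell$ with $e>1$, so the curve $F(x)=my^{\ell}$ is geometrically irreducible. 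This gives
\[\#\Omega_p=\frac{p}{h(p)}+O_{d,\ell}(\sqrt p),\qquad \Omega_p:=\{t\bmod p:F(a+mt)\in m(\F_p^{*})^{\ell}\cup\{0\}\},\]
so that $\log p/\#\Omega_p=h(p)\log p/p+O_{d,\ell}(h(p)^2\log p/p^{3/2})$ with the error term uniformly summable in $p$.

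Next I would apply the larger sieve (Lemma~\ref{lemma:largersieve}) with $\mathcal{S}=\{p\leq Q:p\nmid m\ell\cdot\disc(F)\}$ for a parameter $Q$ to be chosen. The prime number theorem yields $\sum_{p\in\mathcal{S}}\log p\sim Q$, and the key arithmetic identity
\[\sum_{p\leq Q}\frac{\gcd(\ell,p-1)\log p}{p}=\tau(\ell)\log Q+O_{\ell}(1),\]
proved by writing $\gcd(\ell,p-1)=\sum_{e\mid\ell,\,e\mid p-1}\phi(e)$ and invoking Mertens' theorem in arithmetic progressions, yields
\[\sum_{p\in\mathcal{S}}\frac{\log p}{\#\Omega_p}=\tau(\ell)\log Q-\sum_{p\mid m}\frac{\gcd(\ell,p-1)\log p}{p}+O_{\ell,F}(1).\]
Calibrating $Q$ so that the denominator of the larger-sieve estimate is a bounded positive constant forces
\[\log Q=\frac{1}{\tau(\ell)}\log\!\bigl(\tfrac{B}{m}\bigr)+\frac{1}{\tau(\ell)}\sum_{p\mid m}\frac{\gcd(\ell,p-1)\log p}{p}+O(1),\]
which makes the numerator of the sieve bound $\asymp Q$, yielding exactly the per-class bound we want.

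\emph{Main obstacle.} The most delicate step is the uniform character-sum estimate $\#\Omega_p=p/h(p)+O_{d,\ell}(\sqrt p)$. When $h(p)\gg\sqrt p$ the ``main term'' $p/h(p)$ and the Weil error $\sqrt p$ are of comparable size, so one has to verify that the contribution of such primes to the sum $\sum_p\log p/\#\Omega_p$ still captures $\tau(\ell)\log Q$ up to $O(1)$; this uses that primes with large $h(p)$ lie in sparse arithmetic progressions. The hypothesis $\gcd(d,\ell)=1$ is essential both for the geometric irreducibility of $y^\ell=F(x)/m$ in the Weil bound and for ruling out degenerate behaviour of $F$ modulo small primes. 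A secondary technicality is verifying that the prescribed $Q$ is large enough for the Mertens-type estimates to hold; if $B/m$ is too small, the trivial bound $\varrho(m)$ already dominates the claim.
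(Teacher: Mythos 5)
Your proposal follows the same overall strategy as the paper's proof: decompose by residue classes mod $m$ on which $F$ vanishes, and in each class apply the larger sieve with the $\ell$-th power constraint at each small prime, using the weighted average $\sum_{p\leq Q}\frac{\gcd(\ell,p-1)\log p}{p}\sim\tau(\ell)\log Q$ to extract the exponent $1/\tau(\ell)$.

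The bookkeeping differs in a couple of places. You substitute $x=a+mt$ and sieve on $t$ in a box of length $\asymp B/m$, while the paper sieves on $x\in[-B,B]$ directly and incorporates the congruence $x\equiv\alpha_i\bmod m$ into the sieve set $\mathcal{S}$ via $\Omega_q=\{\alpha_i\bmod q\}$ for prime powers $q\mid m$ (which contribute $\log m$ to the numerator and denominator); these are equivalent ways of gaining the factor $m$. You estimate $\#\Omega_p$ via Weil bounds for multiplicative character sums and orthogonality, whereas the paper counts points $N(p)=\#\{(x,z)\in\F_p^2:F(x)\equiv mz^\ell\bmod p\}$ by Lang--Weil, using Schmidt's theorem that $\gcd(d,\ell)=1$ ensures $F(x)-mz^\ell$ is irreducible over $\F_p$, and then recovers $\#\Omega_p=\frac{1}{\gcd(\ell,p-1)}(N(p)-\varrho(p))+\varrho(p)$. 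These are the same estimate packaged differently. For the key identity $\sum_{p\leq Q}\frac{\gcd(\ell,p-1)\log p}{p}=\tau(\ell)\log Q+O(1)$, you expand $\gcd(\ell,p-1)=\sum_{e\mid\gcd(\ell,p-1)}\phi(e)$ and apply Mertens in progressions; the paper groups primes by their class mod $\ell$ and invokes Menon's identity. Again, the same content.

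One remark on your stated ``main obstacle'': the range $h(p)\gg\sqrt p$ is not delicate at all, since $h(p)=\gcd(\ell,p-1)\leq\ell$ is uniformly bounded, so $h(p)\gg\sqrt p$ forces $p\ll\ell^2$. There are only $O_\ell(1)$ such primes and they may simply be excluded from $\mathcal{S}$ at the cost of an $O_\ell(1)$ in the denominator. Likewise the error $\sum_p h(p)^2\log p/p^{3/2}\leq\ell^2\sum_p\log p/p^{3/2}$ converges trivially, so no careful analysis of sparse progressions is needed.
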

\begin{proof}
Let $c_0$ be the leading coefficient of $F(x)$.
Let 
\[N(p)\coloneqq \#\left\{(x,z)\in\F_p^2: F(x)\equiv mz^{\ell}\bmod p\right\}.\]
Since $d$ and $\ell$ are coprime, $F(x)-mz^{\ell}$ is irreducible in $\F_p[x,z]$ as long as $p\nmid mc_0$ by~\cite[Chapter~III Theorem~1B]{SchmidtFF}.
Therefore Lang--Weil estimates~\cite{LangWeil} imply that $N(p)=p+O(\sqrt{p})$ when $p\nmid mc_0$.
Let $\alpha_1,\dots,\alpha_{\varrho(m)}$ denote the solutions of $F(x)\equiv 0\bmod m$. Split the set according to $x\bmod m$ to obtain
\begin{multline*}
\#\left\{x\in\Z :
|x|\leq B,\ F(x)=mz^{\ell}\text{ for some }z\in\Z\right\}\\
=\sum_{i=1}^{\varrho(m)}\#\left\{x\in\Z :
\begin{array}{l}
|x|\leq B,\ x\equiv \alpha_i\bmod m,\\ F(x)= mz^{\ell}\text{ for some }z\in\Z\end{array}\right\}.
\end{multline*}
Take $P$ to be a parameter which will be specified later. 
For each fixed $i$, we apply Lemma~\ref{lemma:largersieve} with
\[\Omega_q\coloneqq\begin{cases}
\hfil \{\alpha_i\bmod q\}&\text{if } q\mid m,\\
\hfil \{x\bmod p: F(x)\equiv mz^{\ell}\bmod p\text{ for some }z\}&\text{if }q=p\leq P\text{ and }p\nmid m,
\end{cases}\]
and $\mathcal{S}$ the union of primes $c_0<p\leq P$ and all prime powers dividing $m$.
Note that when $p\nmid m$, given $a\in \F_p^{\times}$, $mz^{\ell}\equiv a\bmod p$ has $\gcd(\ell,p-1)$-many solutions for $z\bmod p$. 
We find that
\[\#\Omega_q=\begin{cases}
\hfil 1&\text{if } q\mid m,\\
\hfil \frac{1}{\gcd(\ell,p-1)}\left(N(p)-\varrho(p)\right)+\varrho(p)&\text{if }p=q\leq P\text{ and }p\nmid m,
\end{cases}\]
so the denominator from the upper bound in~\eqref{eq:largersieve} becomes
\begin{align*}
\sum_{q\in\mathcal{S}}\frac{\Lambda(q)}{\#\Omega_q}-\log B
&=\log m+\sum_{\substack{c_0< p\leq P\\p\nmid m}}\frac{\log p}{p}\gcd(\ell,p-1)\left(1+O\left(\frac{1}{\sqrt{p}}\right)\right)-\log B\\
&=\log\left(\frac{m}{B}\right)
+\sum_{c_0< p\leq P}\frac{\log p}{p}\gcd(\ell,p-1)
-\sum_{p\mid m}\frac{\log p}{p}\gcd(\ell,p-1)+O(1).
\end{align*}
We can rewrite the second term as
\begin{align*}
\sum_{c_0< p\leq P}\frac{\log p}{p}\gcd(\ell,p-1)
&=\sum_{k\in(\Z/\ell\Z)^\times}\gcd(\ell,k-1)\sum_{\substack{c_0< p\leq P\\p\equiv k\bmod \ell}}\frac{\log p}{p}
\\
&=\frac{1}{\#(\Z/\ell\Z)^\times}\sum_{k\in(\Z/\ell\Z)^\times}\gcd(\ell,k-1)\cdot \log P+O(1)
\\
&=\tau(\ell)\cdot \log P+O(1),
\end{align*}
where the last equality follows from Menon's identity.
Then
\begin{equation}\label{eq:demlargersieve}
\sum_{q\in\mathcal{S}}\frac{\Lambda(q)}{\#\Omega_q}-\log B
=\log\left(\frac{m}{B}P^{\tau(\ell)}\right)+O(1)-\sum_{p\mid m}\frac{\log p}{p}\gcd(\ell,p-1).
\end{equation}
Taking
\[P\coloneqq c\left(\frac{B}{m}\right)^{\frac{1}{\tau(\ell)}}\exp\left(\frac{1}{\tau(\ell)}\sum_{\substack{p\mid m}}\frac{\log p}{p}\gcd(\ell,p-1)\right)\]
for some large enough constant $c$ ensures that~\eqref{eq:demlargersieve} is greater than $1$.
The bound from~\eqref{eq:largersieve} now becomes
\[\ll\sum_{q\in\mathcal{S}}\Lambda(q)-\log B
\leq \log m+\sum_{p\leq P}\log p-\log B\ll P,
\]
which is sufficient.
\end{proof}

For $F_{\mathbf{b}}(x)=F(x,\mathbf{b})$, define
\[\varrho_{\mathbf{b}}(m)\coloneqq \#\left\{x\in \Z/m\Z: F_{\mathbf{b}}(x)\equiv 0\bmod m\right\}.\]
The degree of $F_{\mathbf{b}}(x)$ is $d$ and the leading constant does not depend on $b$ because of the assumption~\ref{a:specialpoly}. Applying Lemma~\ref{lemma:largerapp} to $F_{\mathbf{b}}$ and $-F_{\mathbf{b}}$, we can bound~\eqref{eq:C4bound1} by
\begin{equation}\label{eq:C4bound2}
\sum_{\substack{n\in\Z\setminus\left\{0\right\}\\ p^{d-1}\mid n\text{ for some }p>T_1}}r(n,\B)\ll \sum_{1\leq m\leq \frac{X}{T_1^{d-1}}}\mu^2(m)\left(\frac{B_1}{m}\right)^{\frac{1}{\tau}}f(m)\sum_{\substack{\mathbf{b}\in\Z^{k-1}\\ |b_i|\leq B_i}}\varrho_{\mathbf{b}}(m),\end{equation}
where $\tau=\tau(d-1)$ and 
\begin{equation}\label{eq:deffmult}
f(m)\coloneqq\prod_{p\mid m}\left(1+\frac{2(d-1)}{\tau}\cdot\frac{\log p}{p}\right)
\end{equation}
is a multiplicative function.

\begin{lemma}\label{lemma:largeapp}
Assume that $F\in\Z[\x]$ satisfies~\ref{a:doubleroot} and~\ref{a:specialpoly}.
Suppose that $F$ has $r$ irreducible factors over $\Q$. 
Write
\[
m_{T_2}\coloneqq \prod_{p\leq T_2} p^{v_p(m)}.\]
Then for all $T_2,B_2,\dots,B_k,M\geq 2$, we have 
\[\sum_{\substack{1\leq m\leq M\\m_{T_2}\leq \sqrt{B}}}\mu^2(m)f(m)\sum_{\substack{\mathbf{b}\in\Z^{k-1}\\ |b_i|\leq B_i}}\varrho_{\mathbf{b}}(m)\ll M\frac{(\log M)^{d-1} }{(\log T_2)^{d-r}}\prod_{i=2}^k B_i,\]
where $B\coloneqq\min\{B_2,\dots, B_k\}$, $f$ is the multiplicative function defined in~\eqref{eq:deffmult}, and the implied constant depends only on $F$.
\end{lemma}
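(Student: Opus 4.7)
The plan is to combine box-averaging in $\mathbf{b}$ with a multiplicative-function estimate, exploiting Lang--Weil savings at large primes via the decomposition $m = uv$ with $u = m_{T_2}$ the $T_2$-smooth part and $v = m/u$ the $T_2$-rough part. Since $m$ is squarefree, $\gcd(u,v) = 1$, so by the Chinese remainder theorem $\varrho_{\mathbf{b}}(m) = \varrho_{\mathbf{b}}(u)\varrho_{\mathbf{b}}(v)$, $\rho(m) = \rho(u)\rho(v)$, and $f(m) = f(u)f(v)$ by multiplicativity.

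For the inner sum over $\mathbf{b}$, I would partition the box $\prod_{i=2}^k [-B_i, B_i]$ into residue classes modulo $m$. Each such class contributes at most $\prod_{i=2}^k (B_i/m + 1)$ lattice points, and $\sum_{\mathbf{b}\bmod m}\varrho_{\mathbf{b}}(m) = m^k\rho(m)$ by the definition of $\rho$. When $m \leq B$ this yields $\sum_{|b_i|\leq B_i}\varrho_{\mathbf{b}}(m) \ll m\rho(m)\prod_{i=2}^k B_i$. I would then bound $m\rho(m) = u\rho(u) \cdot v\rho(v)$ termwise: the trivial degree bound gives $u\rho(u) \leq d^{\omega(u)}$ on the smooth side, while on the rough side the Lang--Weil estimate $p\rho(p) = r + O(p^{-1/2})$ underlying Lemma~\ref{lemma:cheb} yields $v\rho(v) \ll r^{\omega(v)}$, with the compounded error factors $(1+O(p^{-1/2}))$ for $p > T_2$ absorbed into the implied constant. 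For the range $m > B$ (relevant when $M > B$), the residue-class averaging breaks down; here I would instead apply the trivial bound $\varrho_{\mathbf{b}}(m) \leq d^{\omega(m)}$, noting that the constraint $m_{T_2} \leq \sqrt{B}$ forces $v = m/u > \sqrt{B}$, which can be exploited via a Rankin's-trick truncation to absorb the extra contribution.

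The problem then reduces to estimating the multiplicative-function sum
\[
\sum_{\substack{u \leq \sqrt{B}\\u\text{ is }T_2\text{-smooth}}}\mu^2(u) f(u) d^{\omega(u)}\sum_{\substack{v \leq M/u\\v\text{ is }T_2\text{-rough}}}\mu^2(v) f(v) r^{\omega(v)}.
\]
Applying Shiu's theorem (or directly a Selberg--Delange-style Euler-product bound) to each sum, together with $\sum_{p \leq T_2} 1/p \sim \log\log T_2$ and $\sum_{T_2 < p \leq M} 1/p \sim \log\log M - \log\log T_2$, and using that $f(p) = 1 + O(\log p/p)$ contributes only $O(1)$ to the Euler product, produces the target bound $M (\log M)^{d-1}/(\log T_2)^{d-r}\prod_{i=2}^k B_i$. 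The main obstacle is the delicate balancing between the two regimes $m \leq B$ and $m > B$, and ensuring that the compounded Lang--Weil error remains uniformly bounded as $\omega(v)$ grows with $M$; these steps together require careful juggling of the multiplicative-function estimates in order to produce precisely the claimed form of the bound.
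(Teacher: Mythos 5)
There is a genuine gap: you have the roles of the trivial degree bound and the Lang--Weil savings swapped between the two prime ranges. The whole point of the lemma is to gain the factor $(\log T_2)^{-(d-r)}$, and that gain must come from the \emph{small} primes $p\leq T_2$, where the constraint $m_{T_2}\leq\sqrt{B}$ allows the box-averaging to see the average $c_p = p\rho(p)$ (which by Lemma~\ref{lemma:cheb} satisfies $\sum_{p\leq T_2} c_p/p = r\log\log T_2 + O(1)$); the \emph{large} primes $p>T_2$ are unconstrained and must be handled with the trivial $\varrho_{\mathbf{b}}(p)\leq d$. You instead propose the trivial bound $u\rho(u)\leq d^{\omega(u)}$ on the $T_2$-smooth side and a claimed Lang--Weil bound $v\rho(v)\ll r^{\omega(v)}$ on the $T_2$-rough side. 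Running the Euler products with those weights produces, roughly, an inner sum $\ll (M/u)(\log M)^{r-1}/(\log T_2)^{r}$ and an outer factor $\prod_{p\leq T_2}\bigl(1+f(p)d/p\bigr)\asymp(\log T_2)^{d}$, so the total is $M(\log M)^{r-1}(\log T_2)^{d-r}\prod B_i$ — not $M(\log M)^{d-1}/(\log T_2)^{d-r}\prod B_i$ as claimed. With the parameter choice actually made in the proof of Lemma~\ref{lemma:C4} (namely $\log T_2\asymp \log Z/\log\log Z$ while $\log M\asymp\log Z$), your bound is \emph{larger} than the target by a factor of order $\bigl(\log Z/(\log\log Z)^2\bigr)^{d-r}$, which destroys the final power-of-$\log$ savings.

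There are two further problems. First, the pointwise claim $v\rho(v)\ll r^{\omega(v)}$ is not justified: Lemma~\ref{lemma:cheb} is an averaged statement, and for a fixed $p$ one only has $c_p\ll 1$ (indeed $c_p$ can approach $d$ for infinitely many $p$ when some $F_i$ is not geometrically irreducible), so the stated bound fails for many individual rough $v$. Second, the split into $m\leq B$ and $m>B$ is avoidable and the $m>B$ branch as outlined would not close: applying $\varrho_{\mathbf{b}}(m)\leq d^{\omega(m)}$ together with a Rankin truncation yields no $(\log T_2)^{-(d-r)}$ savings at all. The paper sidesteps this entirely by first extracting the rough part pointwise, $\varrho_{\mathbf{b}}(m)\leq d^{\omega(m/m_{T_2})}\varrho_{\mathbf{b}}(m_{T_2})$, and only then box-averaging (via Lemma~\ref{lemma:largesieve}) over the residue classes modulo the small modulus $m_{T_2}\leq\sqrt{B}$, so that the condition $m\leq B$ is never needed. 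Your residue-class argument for the inner sum is fine in principle as a substitute for the large-sieve step, but it must be applied modulo $m_{T_2}$, not modulo $m$, and the $c_p$ weights must be retained on the smooth side with the crude $d$ used only on the rough side.
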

\begin{proof}
First fix a positive squarefree integer $m$ such that $m_{T_2}\leq \sqrt{B}$.

Fixing $\mathbf{b}$, we have $\rho_{\mathbf{b}}(p)\leq d$ as long as $p$ does not divide the coefficient of $x_1^d$ in $F$, so by~\ref{a:specialpoly} there are only finitely many exceptions to this upper bound. Then
\[\sum_{\substack{\mathbf{b}\in\Z^{k-1}\\ |b_i|\leq B_i}}\varrho_{\mathbf{b}}(m)\ll
d^{\#\{p>T_2:p\mid m\}}\sum_{\substack{\mathbf{b}\in\Z^{k-1}\\ |b_i|\leq B_i}}\varrho_{\mathbf{b}}(m_{T_2}).\]
Take $B'$ to be a multiple of $m_{T_2}$ such that $B'\geq B^2$. Then
\[\sum_{\substack{\mathbf{b}\in\Z^{k-1}\\ |b_i|\leq B_i}}\varrho_{\mathbf{b}}(m_{T_2})
\ll 
\frac{m_{T_2}}{B'}\#\left\{\x\in \Z^k: \begin{array}{l}
|x_1|\leq B',\ |x_i|\leq B_i\text{ for }2\leq i\leq k,\\ F(\x)\equiv 0\bmod m\end{array}\right\}.\]
To apply Lemma~\ref{lemma:largesieve}, define 
\[\Omega_p\coloneqq\begin{cases}
\left\{\x\in \F_p^{k}: F(\x)\equiv 0\bmod p\right\}&\text{if }p\mid m_{T_2},\\
\hfil \F_p^{k}&\text{otherwise}.
\end{cases}\]
Let
\[c_p\coloneqq \frac{1}{p^{k-1}}\#\left\{\x\in \F_p^k: F(\x)\equiv 0\bmod p\right\}.\]
We apply the large sieve in Lemma~\ref{lemma:largesieve} with
\[
L(\sqrt{B})=\sum_{\substack{1\leq t\leq \sqrt{B}\\ t\mid m_{T_2}}}\prod_{p\mid t}\frac{p^k-\#\Omega_p}{\#\Omega_p}
\geq
\sum_{t\mid m_{T_2}}\prod_{p\mid t}\frac{p^k-\#\Omega_p}{\#\Omega_p}
\geq
\prod_{p\mid m_{T_2}}\frac{p^k}{\#\Omega_p}
\geq \prod_{p\mid m_{T_2}}\frac{p}{c_p}.\]
We obtain
\begin{equation}\label{eq:sumrhob} 
\sum_{\substack{\mathbf{b}\in\Z^{k-1}\\ |b_i|\leq B_i}}\varrho_{\mathbf{b}}(m_{T_2})\ll \frac{m_{T_2}}{B'}B'\prod_{i=2}^k B_i\prod_{p\mid m_{T_2}}\frac{c_p}{p} 
=\prod_{i=2}^k B_i\prod_{p\mid m_{T_2}}c_p.
\end{equation}
The next step is to sum~\eqref{eq:sumrhob} over $m$ with the multiplicative factor $\mu^2(m)f(m)d^{\#\{p>T_2:p\mid m\}}$.
By~\eqref{eq:rhobounds} and~\ref{a:doubleroot}, $c_p$ is bounded by some constant depending only on $F$.
Lemma~\ref{lemma:cheb} and~\ref{a:doubleroot} imply that 
\[\sum_{p\leq T_2}\frac{c_p}{p}=r\log\log T_2+O(1)\quad\text{ and }\quad
\sum_{p\leq T_2}\frac{c_p\log p}{p^2}=O(1).\]
Therefore by~\cite{Wirsing}, we have
\begin{align*}
\sum_{\substack{1\leq m\leq M\\m_{T_2}\leq \sqrt{B}}}\mu^2(m)f(m)\sum_{\substack{\mathbf{b}\in\Z^{k-1}\\ |b_i|\leq B_i}}\varrho_{\mathbf{b}}(m)
&\ll
\prod_{i=2}^n B_i\cdot \sum_{\substack{1\leq m\leq M\\m_{T_2}\leq \sqrt{B}}}\mu^2(m)f(m)\prod_{p\mid m_{T_2}}c_p\prod_{\substack{p\mid m\\p>T_2}}d
\\
&\ll 
\frac{M}{\log M}\prod_{i=2}^n B_i\cdot \exp\left(\sum_{p\leq T_2}\frac{f(p)}{p}c_p+\sum_{T_2<p\leq M} \frac{f(p)}{p}d\right)\\
&\ll 
M\cdot\frac{(\log M)^{d-1} }{(\log T_2)^{d-r}}\cdot\prod_{i=2}^n B_i
\end{align*}
as claimed.
\end{proof}

\begin{lemma}\label{lemma:smoothapp}
Let $d>0$ and let $f$ be the multiplicative function defined in~\eqref{eq:deffmult}.
There exists some constant $c>0$ depending only on $d$ such that
\[\sum_{\substack{m\leq M\\m_{T_2}> \sqrt{B}}}\mu^2(m)f(m)d^{\omega(m)}\ll M(\log M)^{d-1}\exp\left(-c\frac{\log B}{\log T_2}\right)\]
for all $T_2,B,M\geq 2$.
The implied constant depends only on $d$.
\end{lemma}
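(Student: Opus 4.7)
The statement is essentially an application of Tenenbaum's Lemma~\ref{lemma:T} to the multiplicative function
\[g(m)\coloneqq\mu^2(m)f(m)d^{\omega(m)},\]
with the choice of parameters $X=M$, $t=T_2$, $T=\sqrt{B}$. First I will verify that $g$ satisfies the hypotheses of Lemma~\ref{lemma:T}. Since $g$ is supported on squarefree integers, $g(p^v)=0$ for $v\geq 2$, so the condition $0\leq g(p^v)\leq \lambda_1\lambda_2^v$ with, say, $\lambda_2=1$, reduces to bounding $g(p)=d\cdot f(p)=d+\frac{2d(d-1)}{\tau}\cdot\frac{\log p}{p}$, which is $\ll 1$ uniformly in $p$, with an implied constant depending only on $d$. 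Hence some $\lambda_1=\lambda_1(d)$ works.

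Next I compute the sum over primes appearing in the exponent of Lemma~\ref{lemma:T}. Using Mertens' theorem together with the convergence of $\sum_p \frac{\log p}{p^2}$, one has
\[\sum_{p\leq M}\frac{g(p)}{p}=d\sum_{p\leq M}\frac{1}{p}+\frac{2d(d-1)}{\tau}\sum_{p\leq M}\frac{\log p}{p^2}=d\log\log M+O(1),\]
with an implied constant depending only on $d$.

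Plugging these estimates into Lemma~\ref{lemma:T} yields
\[\sum_{\substack{1\leq m\leq M\\m_{T_2}\geq \sqrt{B}}}g(m)\ll\frac{M}{\log M}\exp\left(-\lambda_3\,\frac{\log\sqrt{B}}{\log T_2}+d\log\log M+O(1)\right),\]
where $\lambda_3$ depends only on the choice of $\lambda_2$, hence only on $d$. Exponentiating the log-log term produces a factor $(\log M)^d$, and absorbing the $O(1)$ into the implied constant gives
\[\sum_{\substack{1\leq m\leq M\\m_{T_2}\geq \sqrt{B}}}\mu^2(m)f(m)d^{\omega(m)}\ll M(\log M)^{d-1}\exp\left(-c\,\frac{\log B}{\log T_2}\right)\]
with $c\coloneqq\lambda_3/2>0$, as required.

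The argument is essentially mechanical: the only step requiring any care is checking the mild growth assumption on $g$ at prime powers, which is automatic since $g$ is squarefree-supported and $f(p)$ is uniformly bounded. I do not anticipate any genuine obstacle.
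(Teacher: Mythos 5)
Your proof is correct and is essentially the same as the paper's (one-line) proof: apply Tenenbaum's Lemma~\ref{lemma:T} to the multiplicative function $f(m)d^{\omega(m)}$ with $t=T_2$, $T=\sqrt{B}$, and then estimate $\sum_{p\le M}d\cdot f(p)/p=d\log\log M+O(1)$ to convert the exponential factor into $(\log M)^d$. The only cosmetic difference is that you include the $\mu^2$ in the multiplicative function you feed into the lemma, which is harmless since it only kills the higher prime-power terms.
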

\begin{proof}
Apply Lemma~\ref{lemma:T} to the multiplicative function $f(m)d^{\omega(m)}$. We get an upper bound
\[\ll \frac{M}{\log M}\exp\left(-c\frac{\log B}{\log T_2}+\sum_{p\leq M} \frac{d\cdot f(p)}{p}\right)\ll M(\log M)^{d-1}\exp\left(-c\frac{\log B}{\log T_2}\right).\]
\end{proof}

\begin{proof}[Proof of Lemma~\ref{lemma:C4}]
We will bound~\eqref{eq:C4bound2}. We use the fact that $B=\min\{B_2,\dots,B_k\}\geq Z$.
Take 
\[T_2\coloneqq\exp\left(\frac{c\log Z}{(d-r)\log\log Z}\right),\]
where $c$ is the constant from Lemma~\ref{lemma:smoothapp}.
Then it follows from Lemma~\ref{lemma:largeapp} and Lemma~\ref{lemma:smoothapp} that
\[\sum_{M<m\leq 2M}\mu^2(m)f(m)\left(\frac{B_1}{m}\right)^{\frac{1}{\tau}}\sum_{\substack{\mathbf{b}\in\Z^{k-1}\\ |b_i|\leq B_i}}\varrho_{\mathbf{b}}(m)
\ll \left(\frac{M}{B_1}\right)^{1-\frac{1}{\tau}}V(\log M)^{d-1}\left(\frac{\log\log Z}{\log Z}\right)^{d-r}.\]
Now splitting the sum~\eqref{eq:C4bound2} over dyadic intervals, we have 
\begin{multline*}
\sum_{1\leq m\leq \frac{X}{T_1^{d-1}}}\mu^2(m)f(m)\left(\frac{B_1}{m}\right)^{\frac{1}{\tau}}\sum_{\substack{\mathbf{b}\in\Z^{k-1}\\ |b_i|\leq B_i}}\varrho_{\mathbf{b}}(m)\\
\ll \left( \frac{X}{B_1T_1^{d-1}}\right)^{1-\frac{1}{\tau}}V\cdot (\log Y)^{d-1}\left(\frac{\log\log Z}{\log Z}\right)^{d-r}.\end{multline*}
Recall that $T_1=Y(\log Y)^{1-\delta}$. The assumptions~\eqref{eq:loglb} and~\eqref{eq:loggrowth} implies that
 \[\frac{X}{T_1^{d-1}}\ll \frac{B_1}{(\log Y)^{(1-\delta)(d-1)}}\ll \frac{B_1}{(\log Z)^{(1-\delta)(d-1)}}.\]
Therefore we conclude that
\begin{align*}
\sum_{\substack{n\in\Z\setminus\left\{0\right\}\\ p^{d-1}\mid n\text{ for some }p>T_1}}r(n,\B)&\ll
\frac{V}{(\log Z)^{(1-\delta)(d-1)(1-\frac{1}{\tau})}}(\log Y)^{d-1}\left(\frac{\log\log Z}{\log Z}\right)^{d-r}\\
&\ll
V\frac{\left(\log\log Z\right)^{d-r} }{(\log Z)^{(d-1)(\frac{d-r}{d-1}-\delta(1-\frac{1}{\tau})-\frac{1}{\tau})} },
\end{align*}
where we have applied the bound~\eqref{eq:loggrowth}.
The proof is complete on verifying that the choice of $\delta$ satisfies
$(d-1)(\frac{d-r}{d-1}-\delta(1-\frac{1}{\tau})-\frac{1}{\tau})=\delta$.
\end{proof}

\begin{proof}[Proof of Theorem~\ref{theorem:main1}]
Put together the error terms from Lemma~\ref{lemma:C1}, Lemma~\ref{lemma:smoothpart}, Lemma~\ref{lemma:C3} and Lemma~\ref{lemma:C4} with the choice of parameters given in~\ref{eq:parameters}.
\end{proof}

\section{Some consequences of Theorem~\texorpdfstring{\ref{theorem:Szpiroirr}}{1.2}}
To facilitate the proofs of Theorem~\ref{theorem:szpiroell} and Theorem~\ref{theorem:SzpiroTorsheight}, we establish two variants of Theorem~\ref{theorem:Szpiroirr} in this section, namely Theorem~\ref{theorem:main2} and Theorem~\ref{theorem:weightedhom}.

\begin{theorem}\label{theorem:main2}
Suppose $F\in\Z[\x]$ has total degree $d\geq 1$. Write \[F(\x)=c_0F_1(\x)^{e_1}F_2(\x)^{e_2}\dots F_{r}(\x)^{e_{r}},\]
 where $c_0\in\Q$, $F_i\in \Z[\x]$ are pairwise coprime, irreducible over $\Q$, and having total degrees $d_i\geq 1$. Let $B_1,\dots,B_k\geq 3$ such that~\eqref{eq:ABlog} holds.
Assume that for every $F_i$, there exists some $j$ such that $x_j^{d_i}$-coefficients is non-zero and 
\[
 B_1^{e_1}\dots B_k^{e_k}\ll (\max B_i)^{d_i-1}B_j
\]
for all integer $k$-tuple $(e_1,\dots,e_k)$ such that the $x_1^{e_1}\dots x_k^{e_k}$-coefficient of $F_i$ is non-zero.
 Let $w_i\coloneqq \max\left\{1,d_i-2\right\}$. Fix a constant $\beta>1$. Then we have
\[
 \frac{1}{B_1\dots B_k}\#\left\{\x\in\Z^k:|x_i|\leq B_i,\ |F_1(\x)|^{\frac{1}{w_1}}\cdots |F_{r}(\x)|^{\frac{1}{w_{r}}}
 \geq\rad(F(\x))^{\beta}\right\}
\ll \frac{(\log\log B_1)^2}{(\log B_1)^{\frac{1}{2}}},
\]
where
the implied constant depends at most on $\beta$, $F$ and the implied constants in~\eqref{eq:ABlog}.
\end{theorem}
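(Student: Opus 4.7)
The plan is to reduce Theorem~\ref{theorem:main2} to Theorem~\ref{theorem:Szpiroirr} applied individually to each irreducible factor $F_i$, via a pigeonhole argument, after dealing separately with two exceptional sets of $\x$: one where the integers $F_i(\x)$ share too many primes, and one where $\rad(F(\x))$ is atypically small. The key observation is that $\rad(F(\x)) = \rad(c_0 F_1(\x)\cdots F_r(\x))$ agrees with $\prod_i \rad(F_i(\x))$ up to a multiplicative ``excess'' factor supported on primes dividing two distinct $F_i(\x)$ and $F_j(\x)$. If the excess is at most $\rad(F(\x))^{\epsilon}$ for some $\epsilon\in(0,\beta-1)$, then the hypothesis $\prod_i |F_i(\x)|^{1/w_i} \geq \rad(F(\x))^\beta$ upgrades to $\prod_i |F_i(\x)|^{1/w_i} \geq \prod_i \rad(F_i(\x))^{\beta'}$ for some $\beta'\in(1,\beta/(1+\epsilon))$, and taking logs and applying pigeonhole produces an index $i$ with $|F_i(\x)| \geq \rad(F_i(\x))^{w_i\beta'}$. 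Since $w_i\beta' > w_i = \max\{1, d_i - 2\}$, Theorem~\ref{theorem:Szpiroirr} applies to $F_i$; its remaining hypotheses (non-vanishing $x_j^{d_i}$-coefficient, lopbox condition, log-growth of the $B_i$'s) transfer directly from those assumed in Theorem~\ref{theorem:main2}.

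To isolate the excess I would exploit that each pair $F_i, F_j$ is coprime in $\Q[\x]$. The geometric sieve (Lemma~\ref{lemma:geoms}) applied to each such pair bounds the set of $\x$ in the box having a shared prime factor exceeding $M$ by $\ll V(\log V)/(M\log M)$, which for $M=(\log B_1)^2$ is absorbed by the target. A first-moment bound on $\log\gcd(F_i(\x), F_j(\x))$, using that the locus $\{p\mid F_i(\x),\ p\mid F_j(\x)\}$ has density $\ll 1/p^2$ by coprimality (a codimension-two subvariety of $\F_p^k$), gives $\EE[\log\gcd]=O(1)$. Markov's inequality then yields $\#\{\x:\log\gcd\geq L\} \ll V/L$, and taking $L=(\log B_1)^{1/2}$ controls the excess by $\exp(O((\log B_1)^{1/2}))$ outside an exceptional set of size $\ll V/(\log B_1)^{1/2}$. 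The residual regime where $\rad(F(\x))\leq\exp(C(\log B_1)^{1/2})$ (so that $\rad(F(\x))^\epsilon$ cannot dominate the excess bound) corresponds to $F(\x)$ having very few distinct prime factors, and such $\x$ are sparse by a Tenenbaum-style smooth-value estimate in the spirit of Lemma~\ref{lemma:smoothpart}.

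With both exceptional sets under control, Theorem~\ref{theorem:Szpiroirr} applied to each $F_i$ with exponent $w_i\beta'$ yields a bound of $\ll (\log\log B_1)^{\max\{1,d_i-1\}}/(\log B_1)^{\delta_i}$, where $\delta_i=1$ if $d_i\leq 2$ and $\delta_i = 1 - 2/(d_i+1)$ if $d_i\geq 3$. A direct comparison shows each such bound is $\ll (\log\log B_1)^2/(\log B_1)^{1/2}$: the case $d_i=3$ exactly saturates the target (with $\delta_i=1/2$ and loglog exponent $2$), while for $d_i\geq 4$ the factor $(\log B_1)^{1/2-2/(d_i+1)}$ more than compensates for the loglog loss $(\log\log B_1)^{d_i-3}$, and for $d_i\leq 2$ the bound $(\log\log B_1)/\log B_1$ is strictly stronger. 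Summing these contributions over $i=1,\dots,r$ together with the two exceptional-set errors produces the claimed bound. The main obstacle I anticipate is the small-$\rad(F(\x))$ regime, where the pigeonhole reduction is invalid and one must argue directly that such $\x$ are sparse in the box.
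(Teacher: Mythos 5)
Your overall strategy matches the paper's: apply Theorem~\ref{theorem:Szpiroirr} to each irreducible factor $F_i$, control the ``excess'' $\prod_i\rad(F_i(\x))/\rad(F(\x))$, and combine. The genuine difference is how the excess gets absorbed. The paper's Lemma~\ref{lemma:rehompolygcd} (which is precisely your first-moment-plus-Markov idea, except on $\log\rad(\gcd(F_i(\x),F_j(\x)))$ rather than $\log\gcd$ --- a single prime per pair is all that matters since the excess is squarefree, and the first moment of $\log\gcd$ itself would need an extra argument about prime-power multiplicities) bounds the excess by $Z^{\epsilon}$ with $Z=\min B_i$, and then compares it not to $\rad(F(\x))$ but to $\prod_i|F_i(\x)|$: after removing the $\ll V/\log B_1$ points where some $|F_i(\x)|\leq Z/\log Z$ (Lemma~\ref{lemma:C1}), one has $Z^{\epsilon}\ll\prod_i|F_i(\x)|^{2\epsilon/r}$ and simply folds this into the exponents $\beta_i$. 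That trick sidesteps your residual small-radical case entirely. Your route, with the smaller Markov cutoff $L=(\log B_1)^{1/2}$ forced by comparing the excess against $\rad(F(\x))$, does create the extra case $\rad(F(\x))\leq\exp(C(\log B_1)^{1/2})$; it can in fact be closed the way you indicate, provided you also first exclude $|F(\x)|\leq B_1/\log B_1$ (you did not say this, but it is needed --- small $|F(\x)|$ alone gives small radical without any smoothness): then the small-radical condition forces $n_t\geq B_1/\log B_1$ with $t=\exp(C(\log B_1)^{1/2})$, and Lemma~\ref{lemma:smoothpart} gives $\ll V(\log t)/\log T\ll V/(\log B_1)^{1/2}$, matching the target. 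So your argument is correct, just a step longer than necessary; the paper's absorption against $|F_i(\x)|$ is the cleaner move, and is worth internalising.
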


Theorem~\ref{theorem:weightedhom} provides a result that is specific to binary forms.
To state Theorem~\ref{theorem:weightedhom}, we require the following definition.

\begin{definition}\label{def:Ffac}
Let $F(x,y)\in\Z[x,y]$ be a binary form of degree $d\geq 1$.
Write 
\[
F(x,y)=
c_0F_0(x,y)^{e_0}F_1(x,y)^{e_1}F_2(x,y)^{e_2}\dots F_{\psi}(x,y)^{e_{\psi}},
\]
 where $F_0(x,y)=y$, $e_0\geq 0$, $e_1,\dots,e_{\psi}\geq 1$, $c_0\in\Q$ and such that $F_i\in \Z[x,y]$ are pairwise coprime homogeneous polynomials that are irreducible over $\Q$. 
For a positive integer $m$, define 
\begin{equation}\label{eq:kappaF}
\kappa(m,F)\coloneqq\frac{md}{\frac{\delta_0}{w_0}+\frac{\delta_1}{w_1}+\dots +\frac{\delta_{\psi}}{w_{\psi}}},\end{equation}
and
\begin{equation}\label{eq:lambdaF}
\lambda(m,F)\coloneqq\frac{md}{\delta_0+\delta_1+\dots +\delta_{\psi}},\end{equation}
where
\[\delta_i\coloneqq \begin{cases}
 \hfil m\deg F_i &\text{ if }1\leq i\leq \psi,\\
\hfil \mathbf{1}_{e_0\geq 1}&\text{ if }i=0,
 \end{cases}\]
and
 \[w_i\coloneqq 
 \begin{cases}
\hfil \max\left\{1,\delta_i-2\right\}&\text{ if }x\nmid F_i(x,y), \\
\hfil 1&\text{ if }x\mid F_i(x,y).
\end{cases}\] 
\end{definition}

Define
\[\mathcal{B}(B_1,B_2)\coloneqq\left\{(a,b)\in\Z^2: |a|\leq B_1,\ |b|\leq B_2\right\}.\]

\begin{theorem}\label{theorem:weightedhom}
Let $F\in\Z[x,y]$ be a binary form of degree $d\geq 1$.
Fix a constant $\beta>\kappa(m,F)$, where $\kappa$ is defined in~\eqref{eq:kappaF}.
Then for all $X\geq 3$ we have
\[
 \frac{1}{X^{\frac{m+1}{md}}}\#\left\{(a,b)\in\mathcal{B}\left(X^{\frac{1}{d}},X^{\frac{1}{md}}\right): \rad(F(a,b^m))^{\beta}\leq X\right\}\ll \frac{(\log\log X)^2}{(\log X)^{\frac{1}{2}}},\]
 where the implied constant depends at most on $\beta$, $F$, and $m$.
 Moreover
\[\rad(F(a,b^m))^{\lambda(m,F)}\ll X\quad \text{for all}\quad(a,b)\in\mathcal{B}\left(X^{\frac{1}{d}},X^{\frac{1}{md}}\right),\]
where $\lambda$ is defined in~\eqref{eq:lambdaF} and the implied constant depends only on $F$ and $m$.
 \end{theorem}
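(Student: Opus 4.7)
My plan is to deduce Theorem~\ref{theorem:weightedhom} from Theorem~\ref{theorem:main2} applied to $F^*(a,b)\coloneqq F(a,b^m)\in\Z[a,b]$.

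First I identify the $\Z[a,b]$-irreducible factorisation of $F^*$. For each $i\geq 1$, writing $F_i(x,y)=\prod_k(x-\alpha_k y)$ over $\overline{\Q}$, the Galois-conjugate roots of $F_i$ form a single orbit, hence so do the factors $(a-\alpha_k b^m)$ of $F_i(a,b^m)$; thus $F_i(a,b^m)$ remains $\Q$-irreducible in $\Q[a,b]$, and
\[F^*(a,b)=c_0\,b^{me_0}\prod_{i\geq 1}F_i(a,b^m)^{e_i}\]
is the irreducible factorisation in $\Z[a,b]$ into pairwise coprime factors. The total degrees of these factors in $(a,b)$ equal the $\delta_i$ of Definition~\ref{def:Ffac} (including the special cases $F_i=y$, which gives the degree-$1$ factor $b$, and $F_i=x$, which gives the degree-$1$ factor $a$), so the weights $\max\{1,d-2\}$ supplied by Theorem~\ref{theorem:main2} agree with the $w_i$ in Definition~\ref{def:Ffac}. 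The coefficient and box hypotheses of Theorem~\ref{theorem:main2} are verified directly: each generic $F_i$ has a non-zero $y^{d_i}$-coefficient (since $x\nmid F_i$), yielding the non-zero $b^{\delta_i}$-coefficient of $F_i(a,b^m)$, and the logarithmic compatibility~\eqref{eq:ABlog} holds since $\log B_1/\log B_2=m$.

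Next, fix $\beta_0\in(1,\beta/\kappa(m,F))$, which exists because $\beta>\kappa(m,F)$. Theorem~\ref{theorem:main2} applied to $F^*$ with this $\beta_0$ shows that outside a set $E_0\subseteq\mathcal B(X^{1/d},X^{1/(md)})$ of cardinality $\ll X^{(m+1)/(md)}(\log\log X)^2/(\log X)^{1/2}$, the product $P(a,b)\coloneqq|b|\prod_{i\geq 1}|F_i(a,b^m)|^{1/w_i}$ (where in the $F_i=x$ case the corresponding factor is just $|a|$) is at most $\rad(F^*)^{\beta_0}$. If additionally $\rad(F^*)^{\beta}\leq X$, then $P\leq X^{\beta_0/\beta}$, which is smaller than its typical order of magnitude $X^{1/\kappa(m,F)}$ by a fixed positive power of $X$. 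A pigeonhole argument in the logarithms then forces one of the factors $|a|$, $|b|$, or $|F_i(a,b^m)|$ to drop below its own typical order $X^{\delta_i/(md)}$ by a polynomial factor. The cases of $|a|$ and $|b|$ are handled by direct lattice-point counting, and for a generic $F_i$, using that $F_i(a,b^m)$ has degree $\deg F_i$ in $a$ with non-vanishing leading coefficient, the bound $\#\{a\in\Z:|F_i(a,b^m)|\leq T\}\ll T^{1/\deg F_i}$ summed over $b$ yields a contribution $\ll X^{(m+1)/(md)-c}$ for some $c>0$. The union of these sets is polynomially small in $X$ and is therefore absorbed into $|E_0|$, giving the first assertion.

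For the universal second statement, subadditivity of the radical gives
\[\rad(F(a,b^m))\leq|c_0|\,\rad(b)^{\mathbf{1}_{e_0\geq 1}}\prod_{i\geq 1}\rad(F_i(a,b^m))\leq|c_0|\,|b|^{\delta_0}\prod_{i\geq 1}|F_i(a,b^m)|,\]
and by homogeneity, together with $|a|,|b|^m\leq X^{1/d}$, each factor satisfies $|F_i(a,b^m)|\ll X^{\deg F_i/d}=X^{\delta_i/(md)}$ (likewise $|b|^{\delta_0}\ll X^{\delta_0/(md)}$). Multiplying yields $\rad(F(a,b^m))\ll X^{\sum_i\delta_i/(md)}=X^{1/\lambda(m,F)}$, as claimed. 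The main obstacle will be the bookkeeping: one must carefully match the $w_i$ and $\delta_i$ of Definition~\ref{def:Ffac}, together with their special conventions in the cases $F_i\in\{x,y\}$, against the degrees and weights arising naturally from the $\Z[a,b]$-factorisation of $F^*$, and then calibrate $\beta_0$ so that both $\beta_0>1$ (needed to apply Theorem~\ref{theorem:main2}) and $\beta_0<\beta/\kappa(m,F)$ (needed to extract the polynomial saving for the pigeonhole exceptional set) hold simultaneously.
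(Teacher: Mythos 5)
Your approach mirrors the paper's essentially step for step: both apply Theorem~\ref{theorem:main2} to $F(a,b^m)$ over the box $\mathcal{B}(X^{1/d},X^{1/(md)})$, and then use that, outside a polynomially small exceptional set, each $|F_i(a,b^m)|$ attains its generic order $X^{\deg F_i/d-o(1)}$, after which the two bounds combine by elementary algebra. The only genuine difference is how that generic-order lower bound is obtained: the paper invokes Lemma~\ref{lemma:binaryformarea} (proved via Diophantine approximation and a dyadic split in $|b|$), whereas you rederive it from the elementary count $\#\{a\in\Z:|F_i(a,b^m)|\leq T\}\ll T^{1/\deg F_i}+1$, uniformly in $b$ because the leading coefficient in $a$ equals $F_i(1,0)\neq 0$; summing over $b$ gives a polynomial saving that is in fact slightly sharper than Lemma~\ref{lemma:binaryformarea}, and is entirely adequate here. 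Two small bookkeeping points to fix in a write-up: you assert but do not verify the box hypothesis of Theorem~\ref{theorem:main2} --- it holds with $j=2$, since every monomial $a^{j}b^{m(\deg F_i-j)}$ of $F_i(a,b^m)$ satisfies $B_1^{j}B_2^{m(\deg F_i-j)}=X^{\deg F_i/d}\leq X^{(m^2\deg F_i-m+1)/(md)}=(\max B_i)^{d_i-1}B_2$, equivalent to $m(m-1)\deg F_i\geq m-1$; and the factor $|b|$ in your product $P$ should carry the exponent $\mathbf{1}_{e_0\geq 1}$, since when $e_0=0$ the variable $b$ is not a factor of $F(a,b^m)$ and Theorem~\ref{theorem:main2} gives no control over it.
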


\subsection{Common prime factors of polynomials}
We will prove Theorem~\ref{theorem:main2} through the next two lemmas.
As before, write
\[V\coloneqq B_1\dots B_k.\]
\begin{lemma}\label{lemma:hompolygcd}
Let $F_1,F_2\in \Z[\x]$ be coprime in $\Q[\x]$. 
Then for $B_1,\dots,B_k,X\geq 2$, we have
\[\#\left\{\x\in\Z^k:|x_i|\leq B_i,\ \rad\left(\gcd(F_1(\x),F_2(\x))\right)\geq X\right\}\ll \frac{V}{\log X}+\frac{V\log V}{\min B_i},\]
where the implied constant depends only on $F_1$ and $F_2$.
\end{lemma}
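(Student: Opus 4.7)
The plan is to split $\mathcal{T} := \{\x\in\Z^k : |x_i|\leq B_i,\ \rad(\gcd(F_1(\x), F_2(\x))) \geq X\}$ according to the size of the largest prime divisor of the gcd. Set $Z := \min_i B_i$ and decompose $\mathcal{T} = \mathcal{T}_1 \sqcup \mathcal{T}_2$, where $\mathcal{T}_2$ consists of those $\x \in \mathcal{T}$ for which some prime $p > Z$ divides both $F_1(\x)$ and $F_2(\x)$, and $\mathcal{T}_1$ is the complement in $\mathcal{T}$. The term $V\log V/\min B_i$ will come from $\mathcal{T}_2$ and the term $V/\log X$ from $\mathcal{T}_1$.

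For $\mathcal{T}_2$ I would apply the geometric sieve (Lemma~\ref{lemma:geoms}) to the pair $(F_1, F_2)$ with $M = Z$. Dividing out the integer content if necessary, coprimality in $\Q[\x]$ ensures that $F_1, F_2$ have no common factor in $\Z[\x]$, so the lemma applies with $H$ bounded in terms of $F_1, F_2$ alone. This yields
\[
\#\mathcal{T}_2 \;\leq\; \#\{\x : |x_i|\leq B_i,\ \exists p>Z,\ p\mid F_1(\x),\ p\mid F_2(\x)\} \;\ll\; \frac{V\log V}{Z\log Z} + \frac{V\log V}{Z} \;\ll\; \frac{V\log V}{\min B_i}.
\]

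For $\mathcal{T}_1$ I would exploit the fact that every prime divisor of $\gcd(F_1(\x), F_2(\x))$ is at most $Z$, so
\[
\log X \;\leq\; \log\rad(\gcd(F_1(\x), F_2(\x))) \;=\; \sum_{\substack{p\leq Z \\ p\mid F_1(\x),\ p\mid F_2(\x)}} \log p.
\]
Summing over $\x \in \mathcal{T}_1$ and extending the outer sum to all $\x$ in the box gives
\[
(\log X)\,\#\mathcal{T}_1 \;\leq\; \sum_{p\leq Z} \log p\cdot N_p, \qquad N_p := \#\{\x : |x_i|\leq B_i,\ p\mid F_1(\x),\ p\mid F_2(\x)\}.
\]
Since $F_1, F_2$ are coprime in $\Q[\x]$, their common vanishing locus in $\AAA^k$ has codimension at least $2$, so by Lang--Weil~\cite{LangWeil} the number of $\F_p$-points is $O(p^{k-2})$ for every prime $p$ outside a finite exceptional set depending only on $F_1, F_2$. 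Because $p \leq Z \leq B_i$ for every $i$, partitioning the box into residue classes modulo $p$ gives $N_p \ll V/p^2$ for such $p$, while for the finitely many exceptional primes the trivial bound $N_p \leq V$ contributes only $O(V)$ in total. Hence $\sum_{p\leq Z} (\log p)\, N_p \ll V\sum_{p\geq 2} (\log p)/p^2 + V \ll V$, so $\#\mathcal{T}_1 \ll V/\log X$. Adding the bounds for $\mathcal{T}_1$ and $\mathcal{T}_2$ completes the proof.

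The only non-routine ingredient is the uniform codimension-two estimate $N_p \ll V/p^2$ valid for all but finitely many primes, which rests on Lang--Weil applied to the common zero set of $F_1$ and $F_2$; this codimension is $\geq 2$ precisely because of the coprimality hypothesis, and the implied constant is uniform in $p$ once $p$ is large enough in terms of the degrees and coefficients of $F_1, F_2$.
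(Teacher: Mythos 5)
Your proof is correct and takes essentially the same route as the paper's: the geometric sieve with $M=\min_i B_i$ handles primes above $Z$, and for small primes a Lang--Weil codimension-two bound $N_p\ll V/p^2$ combined with a Chebyshev-type averaging argument yields the $V/\log X$ term. The paper phrases the second step via the counting functions $r(g,\B)$ and $T(p,\B)$ and the identity $\sum_g r(g,\B)\log g=\sum_p T(p,\B)\log p$, but this is the same computation as your direct bound $(\log X)\#\mathcal{T}_1\leq\sum_{p\leq Z}(\log p)N_p$.
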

\begin{proof}
Let $Z=\min B_i$.
By Lemma~\ref{lemma:geoms},
we can assume that any $p\mid \gcd(F_1(\x),F_2(\x))$ is less than $Z$ with 
\[\ll\frac{V\log V}{Z}\]
many exceptions.

Let 
\[r(g,\B)\coloneqq \#\left\{\x\in\Z^k:|x_i|\leq B_i,\ \rad\left(\gcd(F_1(\x),F_2(\x))\right)=g\right\}\]
and 
\[T(p,\B)\coloneqq \#\left\{\x\in\Z^k:|x_i|\leq B_i,\ F_1(\x)\equiv F_2(\x)\equiv 0\bmod p\right\}.\]
Suppose $g=\rad\left(\gcd(F_1(\x),F_2(\x))\right)$.
Since $F_1, F_2$ are coprime, for sufficiently large $p$, 
\[T(p,\B)\ll \prod_{i=1}^k\left(\frac{B_i}{p^2}+1\right)\ll \frac{V}{p^2}\]
as long as $p\leq Z$.
Then
\begin{equation}\label{eq:prerg}
\sum_{\substack{g\geq X\\ p\mid g\Rightarrow p\leq Z}}r(g,\B)\leq \sum_{\substack{g\\ p\mid g\Rightarrow p\leq Z}}r(g,\B)\frac{\log g}{\log X}.\end{equation}
We have the bound
\[\sum_{\substack{g\\ p\mid g\Rightarrow p\leq Z}}r(g,\B)\log g=\sum_{p\leq Z}T(p,\B)\log p\ll V\sum_{p\leq Z}\frac{\log p}{p^2}\ll V.\]
Putting this back into~\eqref{eq:prerg} gives the required upper bound.
\end{proof}

\begin{lemma}\label{lemma:rehompolygcd}
Let $F_1,F_2,\dots,F_{r}\in \Z[\x]$ pairwise coprime in $\Q[\x]$. 
Then for $B_1,\dots, B_k,X\geq 2$, we have
\begin{multline*}
\#\left\{\x\in\Z^k:|x_i|\leq B_i,\ \rad(F_1(\x))\cdots\rad(F_r(\x))\geq X\rad(F_1(\x)\dots F_r(\x))\right\}\\
\ll \frac{V}{\log X}+\frac{V\log V}{\min B_i},
\end{multline*}
where the implied constant depends only $F_1,\dots,F_{r}$.
\end{lemma}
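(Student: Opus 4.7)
The plan is to reduce the statement to $\binom{r}{2}$ applications of Lemma~\ref{lemma:hompolygcd}, one for each pair of the pairwise coprime polynomials. The key preliminary step is the elementary inequality
\[
\prod_{i=1}^{r} \rad(F_i(\x)) \;\leq\; \rad\bigl(F_1(\x)\cdots F_r(\x)\bigr) \cdot \prod_{1 \leq i<j\leq r} \rad\bigl(\gcd(F_i(\x),F_j(\x))\bigr),
\]
valid for any $\x\in\Z^k$ for which no $F_i(\x)$ vanishes. I would verify this by comparing $p$-adic valuations prime by prime. Fix a prime $p$, and let $k_p \coloneqq \#\{i : p \mid F_i(\x)\}$. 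The left-hand side contributes $k_p$ to $v_p$, while $v_p$ of the right-hand side equals $\mathbf{1}[k_p \geq 1] + \binom{k_p}{2}$. A case check on $k_p \in \{0,1,2,3,\dots\}$ shows $k_p \leq \mathbf{1}[k_p \geq 1] + \binom{k_p}{2}$, giving the inequality. (Points where some $F_i(\x)$ vanishes contribute $O(V/\min B_i)$ by the geometric sieve, which is absorbed in the error term.)

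Given this inequality, the hypothesis $\rad(F_1(\x))\cdots\rad(F_r(\x)) \geq X \rad(F_1(\x)\cdots F_r(\x))$ forces
\[
\prod_{i<j} \rad\bigl(\gcd(F_i(\x),F_j(\x))\bigr) \;\geq\; X,
\]
so by pigeonhole there exists at least one pair $(i,j)$ with $\rad(\gcd(F_i(\x),F_j(\x))) \geq X^{2/(r(r-1))}$.

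To finish, I would take a union bound over the $\binom{r}{2}$ pairs. Since each pair $(F_i,F_j)$ is coprime in $\Q[\x]$ by hypothesis, Lemma~\ref{lemma:hompolygcd} applies to bound the count of $\x$ with $\rad(\gcd(F_i(\x),F_j(\x))) \geq X^{2/(r(r-1))}$ by
\[
\ll \frac{V}{\log(X^{2/(r(r-1))})} + \frac{V \log V}{\min B_i} \;\ll\; \frac{V}{\log X} + \frac{V \log V}{\min B_i},
\]
with implied constants depending on $F_i,F_j$ and on $r$. Summing over the finitely many pairs yields the claim. The only place requiring care is the elementary inequality on radicals, but once that is in hand, the rest is purely a union bound, so no significant new obstacle arises beyond what is already handled in Lemma~\ref{lemma:hompolygcd}.
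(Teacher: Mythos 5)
Your proof is correct and is essentially the paper's own argument: the paper's proof consists of exactly the same two steps, namely the radical inequality $\prod_i \rad(F_i(\x)) \leq \rad(\prod_i F_i(\x)) \prod_{i\neq j} \gcd(\rad(F_i(\x)),\rad(F_j(\x)))$ followed by a union bound over pairs via Lemma~\ref{lemma:hompolygcd} with threshold $X^{1/r^2}$. The only differences are cosmetic (you use unordered pairs and threshold $X^{2/(r(r-1))}$, and you spell out the $p$-adic verification of the inequality and the treatment of points where some $F_i$ vanishes, both of which the paper leaves implicit).
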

\begin{proof}
Observe that
\[\rad(F_1(\x))\cdots\rad(F_r(\x))\leq \rad(F_1(\x)\dots F_r(\x))\prod_{\substack{i,j\\ i\neq j}}\gcd(\rad(F_i(\x)),\rad(F_j(\x))).\]
By Lemma~\ref{lemma:hompolygcd}, $\gcd(\rad(F_i(\x)),\rad(F_j(\x)))\leq X^{\frac{1}{r^2}}$ holds for all $i\neq j$ with at most $O( \frac{V}{\log X}+\frac{V\log V}{\min B_i})$ exceptions.
\end{proof}

\begin{proof}[Proof of Theorem~\ref{theorem:main2}]
Apply Theorem~\ref{theorem:Szpiroirr} to each of $F_1,\dots F_{r}$.
Therefore we can assume that $|F_i(\x)|<\rad(F_i(\x))^{\beta_i}$ for all $i$ by fixing some $\beta_i>w_i$.
Then
\[|F_1(\x)|^{\frac{1}{\beta_1}}|F_2(\x)|^{\frac{1}{\beta_2}}\dots |F_{r}(\x)|^{\frac{1}{\beta_{r}}}<\rad(F_1(\x))\rad(F_2(\x))\cdots \rad(F_{r}(\x)).\]
Let $Z\coloneqq \min B_i$.
By Lemma~\ref{lemma:rehompolygcd}, we can assume that
\[\rad(F_1(\x))\rad(F_2(\x))\cdots \rad(F_{r}(\x))\leq \rad(F(\x)) Z^{\epsilon},\]
where $\epsilon>0$.
By Lemma~\ref{lemma:C1}, we can assume $\frac{Z}{\log Z}< |F_i(\x)|$.
Pick $\epsilon$ small enough so that $\frac{1}{w_i\beta}<\frac{1}{\beta_i}-\frac{2}{r}\epsilon$ for all $i$, then
\[|F_1(\x)|^{\frac{1}{w_1\beta}}|F_2(\x)|^{\frac{1}{w_2\beta}}\dots |F_{r}(\x)|^{\frac{1}{w_{r}\beta}}< \rad(F(\x)).\]
The number of exceptions is acceptable noting the assumption~\eqref{eq:ABlog}.
\end{proof}

\subsection{Binary form estimates}
To prove Theorem~\ref{theorem:weightedhom}, we have to utilise the fact that $F$ is a binary form.
\begin{lemma}\label{lemma:binaryformarea}
Suppose $F(x,y)\in \Z[x,y]$ is a binary form of degree $d\geq 1$ with non-zero discriminant. Let $m$ be a positive integer and $0<\epsilon<\frac{1}{m}$. Then for all $Y\geq 2$, we have
\[\#\left\{
(a,b)\in\mathcal{B}\left(Y,Y^{\frac{1}{m}}\right):
 0<|F(a,b^m)|\leq Y^{d-\epsilon}\right\}
\ll Y^{1+\frac{1}{m}-\frac{1}{md}\epsilon},\]
where the implied constant depends at most on $F$, $m$ and $\epsilon$.
\end{lemma}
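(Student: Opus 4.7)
The plan is to exploit the factorization of $F$ into linear factors over $\overline{\Q}$. Since the discriminant of $F$ is nonzero, we may write
\[F(x, y) = c_0 \prod_{i=1}^{d}(x - \alpha_i y),\]
with $c_0 \in \Q^\times$ and pairwise distinct $\alpha_i \in \overline{\Q}$, so that $F(a, b^m) = c_0 \prod_{i=1}^{d}(a - \alpha_i b^m)$.

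The first step is a geometric-mean argument: if $(a, b) \in \mathcal{B}(Y, Y^{1/m})$ satisfies $0 < |F(a, b^m)| \leq Y^{d-\epsilon}$, then
\[\min_{1 \leq i \leq d} |a - \alpha_i b^m| \leq \left(\frac{|F(a, b^m)|}{|c_0|}\right)^{1/d} \ll Y^{1 - \epsilon/d},\]
with implied constant depending only on $F$. Hence the set to be counted is contained in the union $\bigcup_{i=1}^{d} \mathcal{S}_i$, where
\[\mathcal{S}_i \coloneqq \left\{(a, b) \in \mathcal{B}(Y, Y^{1/m}) : |a - \alpha_i b^m| \ll Y^{1 - \epsilon/d}\right\}\]
and $|\cdot|$ denotes the complex absolute value.

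Next I would bound $\#\mathcal{S}_i$ by fixing $b$ and counting $a$. For any $b$ with $|b| \leq Y^{1/m}$, the admissible integers $a$ lie within complex distance $\ll Y^{1 - \epsilon/d}$ of $\alpha_i b^m$, so their real parts fall in a real interval of length $\ll Y^{1 - \epsilon/d}$. Because $\epsilon < 1/m \leq 1 \leq d$, this radius exceeds $1$, giving $O(Y^{1 - \epsilon/d})$ integer values of $a$, whether $\alpha_i$ is real or not. Summing over $b$ and then over $i$ yields
\[\#\left\{(a, b) \in \mathcal{B}(Y, Y^{1/m}) : 0 < |F(a, b^m)| \leq Y^{d-\epsilon}\right\} \ll d \cdot Y^{1/m} \cdot Y^{1 - \epsilon/d} \ll Y^{1 + 1/m - \epsilon/d},\]
which is majorized by $Y^{1 + 1/m - \epsilon/(md)}$ since $m \geq 1$.

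\textbf{Main obstacle.} The argument is elementary once the factorization is in place. The only subtlety is treating non-real $\alpha_i$, which is handled by projecting the complex disk around $\alpha_i b^m$ onto the real axis; in fact for non-real $\alpha_i$ the imaginary-part constraint $|\mathrm{Im}(\alpha_i)|\,|b|^m \ll Y^{1 - \epsilon/d}$ restricts $b$ further and gives a strictly stronger estimate, but this refinement is not needed for the stated bound.
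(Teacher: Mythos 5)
There is a genuine gap at the very first step: the factorisation $F(x,y)=c_0\prod_{i=1}^{d}(x-\alpha_i y)$ is only valid when the $x^d$-coefficient of $F$ is non-zero. A binary form of degree $d$ with non-zero discriminant may perfectly well be divisible by $y$ (non-vanishing of the discriminant only rules out a repeated linear factor, hence only $y^2\mid F$); for example $F(x,y)=xy$ or $F(x,y)=y$. These cases are squarely within the scope of the lemma and of its use in the paper: the homogenisations $D(x,y)$ occurring later typically carry a factor $F_0=y$ (see Definition~\ref{def:Ffac}), and the paper's own proof explicitly allows $\deg F(t,1)\in\{d-1,d\}$ and treats $F(x,y)=cy$ separately. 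Your argument, as written, silently assumes $y\nmid F$, and this is not a cosmetic issue: the missing case is exactly what forces the exponent $\frac{\epsilon}{md}$ in the statement rather than the $\frac{\epsilon}{d}$ your computation produces. Indeed, writing $F(x,y)=c_0y^{e}\prod_{i=1}^{d-e}(x-\alpha_i y)$ with $e\in\{0,1\}$, the geometric-mean step may single out the factor $|b|^{m}$ instead of some $|a-\alpha_i b^m|$; then one only gets $|b|\ll Y^{(1-\epsilon/d)/m}$, and summing over all $|a|\leq Y$ yields $\ll Y^{1+\frac{1}{m}-\frac{\epsilon}{md}}$ (this also covers $F=cy$, where the product over $i$ is empty). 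Adding this case makes your proof complete, with constants depending only on $F$, $m$, $\epsilon$ as required.

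Apart from this omission, your argument is correct and genuinely simpler than the paper's: where the leading coefficient in $x$ is non-zero, taking the geometric mean of the $d$ complex linear factors and counting integers $a$ in an interval of length $\ll Y^{1-\epsilon/d}$ for each of the $O(Y^{1/m})$ values of $b$ avoids both the paper's threshold splitting at $|b|\leq Y^{\delta}$ and its appeal to the real-root approximation lemma of Fouvry--Waldschmidt, and even gives the slightly stronger exponent $1+\frac{1}{m}-\frac{\epsilon}{d}$ in that case. Once you patch in the $y$-factor case as above, the elementary route fully recovers the stated bound.
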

\begin{proof}

Let 
 \[\delta\coloneqq \begin{cases}
\hfil \frac{1}{m}\left(1-\frac{1}{d-1}\epsilon\right)&\text{if }d\neq 1,\\
\hfil \frac{1}{m}-\epsilon&\text{if }d=1.
\end{cases}
\]
Trivially
\[\#\left\{(a,b)\in\Z^2: |a|\leq Y,\ |b|\leq Y^{\delta}\right\}\leq Y^{1+\delta},\]
so it suffices to bound
\begin{equation}\label{eq:gensetbinary}
\#\left\{(a,b)\in \mathcal{B}\left(Y,Y^{\frac{1}{m}}\right): |b|>Y^{\delta},\ 0<|F(a,b^m)|\leq Y^{d-\epsilon}\right\}.
\end{equation}

Let $f(t)=F(t,1)$, so $F(a,b^m)=b^{md}f(\frac{a}{b^m})$ and $0<|F(a,b^m)|\leq Y^{d-\epsilon}$ implies
\[0<\left|f\left(\frac{a}{b^m}\right)\right|\leq \frac{Y^{d-\epsilon}}{|b|^{md}},\]
when $b\neq 0$.
By hypothesis we have that the degree of $f$ is $d$ or $d-1$, and the discriminant of $f$ is non-zero. The degree of $f$ is at least $1$ unless $F(x,y)=cy$ for some integer $c$. If $F(x,y)=cy$, there are $O(Y)$-many possible $a$ and $|b^m|\ll Y^{1-\epsilon}$, so we have an upper bound of $O(Y^{1+\frac{1}{m}(1-\epsilon)})$ as required. Therefore let us assume that $\deg f\geq 1$.

Let $\xi_1,\dots,\xi_r$ be the real roots of $f$. By~\cite[Lemma~2.3(ii)]{FouvryWaldschmidt}, there exists some $i$ such that 
 \[0<\left|\frac{a}{b^m}-\xi_i\right|\ll\frac{Y^{d-\epsilon}}{|b|^{md}},\]
 equivalently
 \[0<\left|a-\xi_i b^m\right|\ll\frac{Y^{d-\epsilon}}{|b|^{m(d-1)}}.\]
 Given $b$, there are
 \[O\left(\frac{Y^{d-\epsilon}}{|b|^{m(d-1)}}+1\right)\]
 many possible $a$.

 Now sum over $Y^{\delta}<|b|\leq Y^{\frac{1}{m}}$. We have
 \[\sum_{Y^{\delta}<|b|\leq Y^{\frac{1}{m}}} \left(\frac{Y^{d-\epsilon}}{|b|^{m(d-1)}}+1\right)
\ll \begin{cases}
\hfil Y^{d-\epsilon-\delta(m(d-1)-1)}+Y^{\frac{1}{m}}&\text{if }m(d-1)\geq 2,\\
\hfil Y^{2-\epsilon}\log Y+Y&\text{if }d=2\text{ and }m=1,\\
\hfil Y^{1+\frac{1}{m}-\epsilon}+Y^{\frac{1}{m}}&\text{if }d=1.
\end{cases}
\]
This allows us to bound~\eqref{eq:gensetbinary} by 
 \[\ll \begin{cases}
\hfil Y^{1+\delta}&\text{if }m(d-1)\geq 3\text{ or }d=1,\\
\hfil Y^{2-\epsilon}\log Y+Y\ll Y^{2-\frac{1}{2}\epsilon}&\text{if }d=2\text{ and }m=1,\end{cases}
\]
which is sufficient.
\end{proof}

We remove the non-zero discriminant by applying Lemma~\ref{lemma:binaryformarea} to each irreducible factor of $F(x,y)$. It will also be convenient for our later applications to drop the contribution from the primes $2$ and $3$ to the size of $F(x,y)$.
\begin{lemma}\label{lemma:rebinaryformarea}
Suppose $F(x,y)\in \Z[x,y]$ is a binary form of degree $d\geq 1$. Let $m$ be a positive integer and $0<\epsilon<\frac{1}{m}$. Then
\[\#\left\{
(a,b)\in\mathcal{B}\left(Y,Y^{\frac{1}{m}}\right):
 0<\prod_{p\neq 2,3} p^{v_p(F(a,b^m))}\leq Y^{d-\epsilon}\right\}
\ll
 \frac{Y^{1+\frac{1}{m}}}{\log Y},
\]
where the implied constant depends at most on $F$, $m$ and $\epsilon$.
\end{lemma}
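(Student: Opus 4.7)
My plan is to factor $F(x,y)=c_0\prod_{i=1}^{r} G_i(x,y)^{e_i}$ into pairwise coprime irreducible binary forms over $\Q$. Each $G_i$ is squarefree and so has non-zero discriminant as a binary form, which means Lemma~\ref{lemma:binaryformarea} applies to each $G_i$ individually. I will split the counting according to the size of $|F(a,b^m)|$, using the trivial upper bound $|F(a,b^m)|\ll Y^d$ on the box.

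In the case $|F(a,b^m)|\leq Y^{d-\epsilon/2}$, a pigeonhole/AM--GM argument forces some $|G_i(a,b^m)|\leq C\,Y^{\deg G_i-\epsilon/(2\sum_j e_j)}$: otherwise every factor would be within a bounded multiple of its trivial maximum $Y^{\deg G_i}$, and then the $e_i$-weighted product $|F|/|c_0|$ would exceed $Y^{d-\epsilon/2}$. Applying Lemma~\ref{lemma:binaryformarea} to each $G_i$ with the corresponding (slightly adjusted) parameter yields a power-saving bound $\ll Y^{1+1/m-\delta_1}$ for this contribution, much stronger than the target $Y^{1+1/m}/\log Y$.

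In the complementary case $|F(a,b^m)|>Y^{d-\epsilon/2}$, the hypothesis $\prod_{p\neq 2,3}p^{v_p(F(a,b^m))}\leq Y^{d-\epsilon}$ forces $2^{v_2(F(a,b^m))}\cdot 3^{v_3(F(a,b^m))}\gg Y^{\epsilon/2}$, and hence $p^{K_0}\mid F(a,b^m)$ for some $p\in\{2,3\}$ and some $K_0\asymp\log Y$ (with constant depending on $\epsilon$). Pigeonholing through the factorization gives $p^{K_1}\mid G_i(a,b^m)$ for some $i$ with $K_1\asymp\log Y$. For each $i$ I would bound the count by a local sieve: when $y\mid G_i$ the condition reduces to counting $b$ with $p^{\lceil K_1/m\rceil}\mid b$, which is power-saving; otherwise I fix $b$ outside a sparse exceptional set and use the standard bound $\ll p^{K_1(1-1/d_i)}$ on the number of roots of $G_i(\cdot,b^m)$ modulo $p^{K_1}$. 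Summing over $|b|\leq Y^{1/m}$ produces $\ll Y^{1+1/m}\,p^{-K_1/d_i}+Y^{1/m}\,p^{K_1(1-1/d_i)}$, and the constraint $\epsilon<1/m$ keeps the ratio $K_1\log p/\log Y$ well below $d_i/(d_i-1)$, so both terms are $\ll Y^{1+1/m}/\log Y$ (in fact power-saving).

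The hard part will be the sieve step in Case~2. Beyond the standard root-counting bound modulo a prime power, I will need to handle the subcases where the leading $x^{d_i}$-coefficient of $G_i$ happens to be divisible by $p\in\{2,3\}$. Primitivity of $G_i$ guarantees that some monomial of $G_i$ has coefficient coprime to $p$, so by possibly swapping the roles of $a$ and $b$ and treating the sparse exceptional residues separately, the root-counting bound on a degree-$d_i$ polynomial can still be applied.
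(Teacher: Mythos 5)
Your Case 1 (pigeonhole onto a single factor $G_i$ with $|G_i(a,b^m)| \leq Y^{\deg G_i - \epsilon/(2\sum e_j)}$, then apply Lemma~\ref{lemma:binaryformarea}) is exactly the paper's main step. Case 2, however, departs from the paper in a genuine way: the paper simply applies Lemma~\ref{lemma:smoothpart} with $t=3$ to each irreducible factor $F_i(x,y^m)$, which bounds the event that the $\{2,3\}$-smooth part of $F_i(a,b^m)$ exceeds $Y^{\epsilon/(2\psi)}$ by $O(Y^{1+1/m}/\log Y)$ (this is the source of the $\log$-saving in the lemma), after which only Case 1 remains. You instead extract a prime $p\in\{2,3\}$ and $K_1 \asymp_\epsilon \log Y$ with $p^{K_1}\mid G_i(a,b^m)$ and run a direct local root count modulo $p^{K_1}$. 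That route is sound and would in fact give a power-saving bound, stronger than the stated lemma; but it hinges on a uniform estimate $\#\{a\bmod p^{K_1}: G_i(a,b^m)\equiv 0\} \ll_{d_i} p^{K_1(1-1/d_i)}$ that holds \emph{uniformly over $b$}. This is true --- the content of $G_i(x,b^m)$ divides the (fixed, nonzero) $x^{d_i}$-coefficient of $G_i$, so after pulling out a bounded content one applies the standard Newton-polygon estimate for a primitive polynomial of degree $d_i$ --- but it is a nontrivial fact that needs to be stated precisely and proved or cited; you cannot leave it as ``the standard bound.'' Also, your proposed fix of ``swapping the roles of $a$ and $b$'' when $p$ divides the $x^{d_i}$-coefficient is a red herring and doesn't really make sense here: the box $|a|\leq Y$, $|b|\leq Y^{1/m}$ is asymmetric and $b$ only enters through $b^m$. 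Fortunately no swap is needed, since the root-counting estimate above does not require the leading coefficient to be a $p$-adic unit, only that the polynomial be primitive; the sparse exceptional set of $b$ you allude to can be dispensed with. In summary: correct outline, a genuinely different (and in principle stronger) mechanism in Case 2, but the key local estimate is under-specified compared with the paper's clean appeal to Lemma~\ref{lemma:smoothpart}.
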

\begin{proof}
Write
\[F(x,y)=c_0F_1(x,y)\dots F_{\psi}(x,y),\]
where $F_1,\dots,F_{\psi}\in\Z[x,y]$ are irreducible polynomials with positive degrees (not necessarily distinct), and $c_0\in\Q$. Let $d_i$ be the degree of $F_i$.

 Apply Lemma~\ref{lemma:smoothpart} to each $F(a,b^m)$ with $t=3$, then we can assume that 
\[2^{v_2(F_i(a,b^m))}3^{v_3(F_i(a,b^m))}\leq Y^{\frac{1}{2\psi}\epsilon}\]
with the number of exceptions bounded by 
\[\ll \frac{Y^{1+\frac{1}{m}}}{\log Y}.\]

If $0<|F(a,b^m)|\ll Y^{d-\frac{1}{2}\epsilon}$, then there must be some $i$ such that 
$0<|F(a,b^m)|\ll Y^{d_i-\frac{1}{2\psi}\epsilon}$.
Apply Lemma~\ref{lemma:binaryformarea} to each $F_i$, we have an upper bound
\[\#\left\{
(a,b)\in\mathcal{B}\left(Y,Y^{\frac{1}{m}}\right):
 0<|F_i(a,b^m)|\leq Y^{d_i-\frac{1}{2\psi}\epsilon}\right\}\ll Y^{1+\frac{1}{m}-\frac{1}{2md_i\psi}\epsilon}.\]
 Summing this upper bound over $i$ completes the proof.
\end{proof}

\begin{proof}[Proof of Theorem~\ref{theorem:weightedhom}]
Take $\epsilon>0$ small enough such that there exists $\beta_0$ satisfying $(\frac{1}{\kappa(m,F)}-\epsilon)\beta>\beta_0>1$. By Theorem~\ref{theorem:main2}, we may assume that 
\[\rad(F(a,b^m))^{\beta_0}\gg |b|^{\frac{1}{w_0}\mathbf{1}_{e_0\geq 1}}|F_1(a,b^m)|^{\frac{1}{w_1}}\cdots |F_{\psi}(a,b^m)|^{\frac{1}{w_{\psi}}}.\]
Apply Lemma~\ref{lemma:binaryformarea} with $Y=X^{\frac{1}{d}}$, we may assume that $|F_i(a,b^m)|>X^{\frac{\deg F_i}{d}-\frac{1}{\psi+1}\epsilon}$.
Then
\[\rad(F(a,b^m))^{\beta_0}\gg X^{\frac{1}{\kappa(m,F)}-\epsilon},\]
which implies that $\rad(F(a,b^m))^{\beta}> X$ for large enough $X$ as required.

For the final claim, 
\[\rad(F(a,b^m))\ll |F_0(a,b^m)|^{\mathbf{1}_{e_0\geq 1}}|F_1(a,b^m)|\cdots |F_{\psi}(a,b^m)|\ll X^{\frac{1}{md}(\delta_0+\dots+\delta_\psi)},\]
where $\delta_0,\dots,\delta_\psi$ are as in Definition~\ref{def:Ffac}.
\end{proof}

\section{Szpiro ratio of one-parameter families}\label{section:oneparfam}
In this section, we work towards a proof of Theorem~\ref{theorem:szpiroell}.
Suppose $E_{A,B}:y^2=x^3+Ax+B$ is a minimal short Weierstrass model of an elliptic curve over $\Q$, so $A,B\in\Z$, and there exists no prime $p$ such that $p^4\mid A$ and $p^6\mid B$.
The discriminant of $E_{A,B}$ is
\[\Disc(E_{A,B})=-16(4A^3+27B^2),\]
which is minimal at primes $p\neq 2,3$, so
\[\prod_{p\neq 2,3}p^{v_p(\Disc(E_{A,B})) }\leq |\Disc_{\min}(E_{A,B})|\leq |\Disc(E_{A,B})|.\]
 By Tate's algorithm~\cite{Tate}, the conductor of $E_{A,B}$ is 
\[N(E_{A,B})=\prod_p p^{f_p},\]
where
\[f_p\begin{cases}
\hfil =0&\text{ if }p\nmid \Disc(E_{A,B}),\\
\hfil =1&\text{ if }p\mid \Disc(E_{A,B})\text{ and }p\nmid A\text{ and }p\neq 2,3,\\
\hfil =2&\text{ if }p\mid \Disc(E_{A,B})\text{ and }p\mid A\text{ and }p\neq 2,3,\\
\hfil \leq 8&\text{ if }p=2,3.
\end{cases}
\]
Therefore
\begin{equation}\label{eq:ineqNrad}
\rad( \Disc(E_{A,B}))\ll \prod_{\substack{p\mid \Disc(E_{A,B})\\p\neq 2,3}}p\leq N(E_{A,B})
\ll \rad(\gcd(A,B))\cdot \rad( \Disc(E_{A,B})),
\end{equation}
where the implied constants are absolute.

\begin{definition}
Let $\nu\in\left\{1,2\right\}$.
Define $\mathcal{P}_\nu$ to be the set of $(f,g)$, where $f,g\in\Q[t]$ are coprime polynomials such that
\begin{itemize}
 \item there exists positive integers $n,m$ such that 
 \begin{equation}\label{eq:mndef}
 \nu\cdot \max\left\{\frac{1}{4}\deg f,\frac{1}{6}\deg g\right\}=\frac{n}{m}
 \end{equation}
with $m=1$ or $n=1$,
\item $n=1$ if $\nu=2$.
\end{itemize}
\end{definition}

We are interested the Szpiro ratio in families of elliptic curves of the form 
\[\mathcal{E}_t:y^2=x^3+f(t)x+g(t),\ t\in\Q,\]
where $(f,g)\in\mathcal{P}_1$ or $\mathcal{P}_2$.
Given $t\in \Q$, write $t=a/b^m$ for some integers $a,b$ such that $\gcd(a,b^m)$ is not divisible by any $m$-th power.
Define the \emph{parameter height} of $\mathcal{E}_t$ by $\max\{|a|,|b^m|\}$.
In this section, we will study the Szpiro ratio of such families ordered by the parameter height and by the naive height (defined in 
\eqref{eq:naiveheight}).

We first define some constants depending on $(f,g)$, which will serve as upper and lower bounds for the Szpiro ratio in our results.

\begin{definition}
Suppose $(f,g)\in\mathcal{P}_\nu$ for some $\nu\in\left\{1,2\right\}$.
Let $m, n$ be the integers that satisfy~\eqref{eq:mndef}. Let $d(t)\coloneqq 4f(t)^3+27g(t)^2$ and
 \[D(x,y)\coloneqq y^{\frac{12n}{\nu m}}d\left(\frac{x}{y^m}\right).\]
Define 
\begin{equation}\label{eq:lambda}
\lambda_{\nu}(f,g)\coloneqq\lambda(m,D)=\frac{12n/\nu}{\delta_0+\delta_1+\dots +\delta_{\psi}}\end{equation}
and
\begin{equation}\label{eq:kappa}
\kappa_{\nu}(f,g)\coloneqq\kappa(m,D)=\frac{12n/\nu}{\frac{\delta_0}{w_0}+\frac{\delta_1}{w_1}+\dots +\frac{\delta_{\psi}}{w_{\psi}}},\end{equation}
where $\lambda$, $\kappa$, $\delta_i$ and $w_i$ are as in Definition~\ref{def:Ffac}.
\end{definition}

\subsection{Ordering by parameter height}
We show that $\sigma(\mathcal{E}_t)$ is bounded above for almost all $t$, when ordered by its parameter height.
\begin{theorem}\label{theorem:parameter height}
Let $(f,g)\in\mathcal{P}_1$. Let $m, n$ be the integers that satisfy~\eqref{eq:mndef}. Fix $\beta> \kappa_{1}(f,g)$. For any $H\geq 3$, we have 
\[
\frac{1}{H^{\frac{m+1}{12n}}}\#\left\{\frac{a}{b^m}\in \Q: (a,b)\in\mathcal{B}\left(H^{\frac{ m}{12n}}, H^{\frac{1}{12n}}\right),\ \sigma(\E_{\frac{a}{b^m}})\geq\beta\right\}\ll \frac{(\log\log H)^2}{(\log H)^{\frac{1}{2}}},
\]
where the implied constants depend on $f,g$ and $\beta$.
\end{theorem}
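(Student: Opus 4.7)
The plan is to reduce Theorem~\ref{theorem:parameter height} to an application of Theorem~\ref{theorem:weightedhom}. Given $t = a/b^m$ (in a representation where $\gcd(a, b^m)$ is $m$-th power free, which may be arranged after removing a sparse set), the admissible change of variables $(x,y) \mapsto (x/u^2, y/u^3)$ with $u$ an appropriate power of $b$ produces an integral Weierstrass model $y^2 = x^3 + A(a,b)x + B(a,b)$ whose discriminant $-16(4A^3 + 27B^2)$ equals, up to a bounded rational factor, the value $D(a,b^m)$ of the weighted binary form $D(x,y) = y^{12n/m}d(x/y^m)$ from the statement. Since $(a,b) \in \mathcal{B}(H^{m/(12n)}, H^{1/(12n)})$, one has $|A| \ll H^{1/3}$ and $|B| \ll H^{1/2}$, so the naive height of the model is $\ll H$ and $|\Disc_{\min}(\mathcal{E}_t)| \ll |D(a,b^m)| \ll H$.

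The conductor is bounded below via \eqref{eq:ineqNrad}: after bounded discrepancies at $p \in \{2,3\}$, $N(\mathcal{E}_t) \gg \rad(D(a,b^m))/\rad(\gcd(A,B))$. Non-minimality at a prime $p \geq 5$ occurs only when $p^4 \mid A$ and $p^6 \mid B$; both the number of such pairs $(a,b)$ and the contribution of $\rad(\gcd(A,B))$ are controlled by the geometric sieve (Lemma~\ref{lemma:geoms}) and by Lemma~\ref{lemma:hompolygcd} applied to the pairwise coprime irreducible factors of $A$ and $B$ over $\mathbb{Q}$. Consequently, outside an exceptional set of acceptable size, the condition $\sigma(\mathcal{E}_t) \geq \beta$ forces
\[
\rad(D(a,b^m))^{\beta} \ll |D(a,b^m)| \ll H.
\]
Taking $X \asymp H$, this is exactly the condition bounded by Theorem~\ref{theorem:weightedhom} with $F = D$ and matching box dimensions, and the hypothesis $\beta > \kappa_1(f,g) = \kappa(m,D)$ delivers the required density estimate $O((\log\log H)^2/(\log H)^{1/2})$.

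The principal obstacle lies in the middle step: certifying that each of the sources of slack between $|\Disc_{\min}|$ and $|D(a,b^m)|$ on the one hand, and between $N(\mathcal{E}_t)$ and $\rad(D(a,b^m))$ on the other, can be absorbed into a sparse exceptional set. Specifically, one must simultaneously handle non-minimality at $p \geq 5$, the bounded-but-large exponents possible at $p = 2, 3$, the common factor $\rad(\gcd(A,B))$, and the preprocessing that enforces $\gcd(a,b^m)$ to be $m$-th power free. Each of these is individually amenable to a sieve bound, but assembling them while verifying that the powers of $b$ accumulated during the change of variables are consistent with the weighted-degree formalism of Definition~\ref{def:Ffac} is where the bookkeeping becomes delicate.
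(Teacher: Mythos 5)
Your proposal is correct and matches the paper's route: the paper carries out exactly this reduction in Lemma~\ref{lemma:parameter height} (with $\nu=1$, $c=1$), passing to the integral model $y^2 = x^3 + F(a,b^m)x + G(a,b^m)$, bounding $|\Disc_{\min}| \ll |D(a,b^m)| \ll H$, bounding $N \gg \rad(D(a,b^m))/\rad(\gcd(F(a,b^m),G(a,b^m)))$ via~\eqref{eq:ineqNrad}, removing sparse exceptional sets by Lemmas~\ref{lemma:hompolygcd} and~\ref{lemma:rebinaryformarea}, and then applying Theorem~\ref{theorem:weightedhom} to $D$ with $X \asymp H$. The bookkeeping you flag as delicate is precisely what the $\epsilon$-budget in that lemma resolves (one fixes $\epsilon$ small enough so that the slack absorbed from $\rad(\gcd(F,G)) \leq H^{\epsilon}$ and the bounded $2,3$-contributions still leaves an exponent strictly above $\kappa_1(f,g)$), and since only an upper bound is needed, the preprocessing enforcing $\gcd(a,b^m)$ to be $m$-th-power free can be dispensed with.
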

We will deduce Theorem~\ref{theorem:parameter height} from the following lemma.
\begin{lemma}\label{lemma:parameter height}
Let $(f,g)\in\mathcal{P}_\nu$, $\nu\in\left\{1,2\right\}$. Let $m, n$ be the integers that satisfy~\eqref{eq:mndef}. Fix $\beta> \kappa_{\nu}(f,g)$. Given any $a,b,c\in\Z$, let
$(A,B)\coloneqq \left(b^{\frac{4n}{\nu}}c^2f(\frac{a}{b^m}),b^{\frac{6n}{\nu}}c^3g(\frac{a}{b^m})\right)$.
There exists some $\epsilon>0$ depending only on $\beta$ such that for any $H\geq 3$ and uniformly for $1\leq c\leq H^{\epsilon}$, we have
\[
 \frac{1}{H^{\frac{\nu(m+1)}{12n}}}\#\left\{(a,b)\in\mathcal{B}\left(H^{\frac{\nu m}{12n}}, H^{\frac{\nu}{12n}}\right): \sigma(E_{A,B})\geq\beta\right\}\ll \frac{(\log\log H)^2}{(\log H)^{\frac{1}{2}}},\]
where the implied constants depend on $f,g$ and $\beta$.
\end{lemma}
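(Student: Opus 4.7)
The plan is to reduce the statement to a radical bound on a binary-form value and then invoke Theorem~\ref{theorem:weightedhom}. A direct calculation gives
\[4A^3+27B^2 \;=\; c^6 b^{12n/\nu}\, d(a/b^m) \;=\; c^6\, D(a,b^m),\]
where $d(t):=4f(t)^3+27g(t)^2$ and $D$ is the binary form from the definition of $\kappa_\nu(f,g)$, so $\kappa(m,D)=\kappa_\nu(f,g)$. Consequently $|\Disc(E_{A,B})|=16c^6|D(a,b^m)|$, and the task reduces to controlling $\rad(D(a,b^m))$ on the target box.

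Fix an auxiliary $\beta_0$ with $\kappa_\nu(f,g)<\beta_0<\beta$ and apply Theorem~\ref{theorem:weightedhom} to $F=D$ with $X=H$: the box $\mathcal{B}(H^{\nu m/(12n)},H^{\nu/(12n)})$ matches $\mathcal{B}(X^{1/d},X^{1/(md)})$ for $d=12n/(\nu m)$, producing an exceptional set $\mathcal{S}_0$ of cardinality $\ll H^{\nu(m+1)/(12n)}(\log\log H)^2/(\log H)^{1/2}$ outside of which $\rad(D(a,b^m))\geq H^{1/\beta_0}$. For $(a,b)\notin\mathcal{S}_0$ with $1\leq c\leq H^\epsilon$, the trivial bound $|\Disc_{\min}(E_{A,B})|\leq|\Disc(E_{A,B})|$ gives $\log|\Disc_{\min}|\leq(1+6\epsilon)\log H+O(1)$, while the conductor estimate $N(E_{A,B})\geq\rad(\Disc_{\min}(E_{A,B}))\geq\rad(D(a,b^m))/H^{O(\epsilon)}$ (established below) gives $\log N\geq(1/\beta_0-O(\epsilon))\log H$. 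Since $\beta_0<\beta$, choosing $\epsilon$ small enough in terms of $\beta$ and $\beta_0$ then forces $\sigma(E_{A,B})<\beta$ for all sufficiently large $H$.

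The main technical obstacle is the conductor lower bound. For a prime $p\neq 2,3$ with $p\nmid c$ and $p\mid D(a,b^m)$, one has $p\mid\Disc(E_{A,B})$, so either $p\mid\Disc_{\min}$ (contributing to $N$) or the model $(A,B)$ is non-minimal at $p$, forcing $p^4\mid A$, $p^6\mid B$, and hence $p^4\mid\tilde f(a,b)$, $p^6\mid\tilde g(a,b)$, where $\tilde f(a,b):=b^{4n/\nu}f(a/b^m)$ and $\tilde g(a,b):=b^{6n/\nu}g(a/b^m)$. In either case
\[\rad(D(a,b^m))\;\leq\;6c\cdot\rad(\Disc_{\min})\cdot\rad\!\bigl(\gcd(\tilde f(a,b),\tilde g(a,b))\bigr).\]
The condition~\eqref{eq:mndef} guarantees $\tilde f,\tilde g\in\Z[a,b]$; moreover, at most one of $\tilde f,\tilde g$ vanishes identically on $b=0$, and the coprimality of $f,g$ in $\Q[t]$ then implies that $\tilde f,\tilde g$ have no common zero in $\AAA^2(\overline{\Q})$ and are therefore coprime in $\Q[a,b]$. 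Applying Lemma~\ref{lemma:hompolygcd} with threshold $X=H^{\epsilon/10}$ bounds the number of $(a,b)$ with $\rad(\gcd(\tilde f,\tilde g))>H^{\epsilon/10}$ by $\ll H^{\nu(m+1)/(12n)}/\log H$, which is absorbed into the error for $\mathcal{S}_0$; combined with $c\leq H^\epsilon$ this delivers $\rad(\Disc_{\min})\geq\rad(D(a,b^m))/H^{O(\epsilon)}$, closing the argument.
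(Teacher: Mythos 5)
Your proof is correct and follows essentially the same route as the paper's: apply Theorem~\ref{theorem:weightedhom} to the binary form $D$ to get $\rad(D(a,b^m))\geq H^{1/\beta_0}$ for an intermediate $\beta_0\in(\kappa_\nu(f,g),\beta)$ outside an acceptable exceptional set, lower-bound the conductor by the primes dividing $D(a,b^m)$ but not $6c\gcd(\tilde f,\tilde g)$, control $\rad(\gcd(\tilde f,\tilde g))$ via Lemma~\ref{lemma:hompolygcd}, and choose $\epsilon$ small enough in terms of $\beta-\beta_0$. The paper cites its inequality~\eqref{eq:ineqNrad} for the conductor bound; your direct argument via minimality at $p$ and Tate's algorithm derives the same thing and is in fact slightly cleaner, since $(A,B)$ in the lemma need not be a minimal model.

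One small logical slip worth noting: the assertion that coprimality of $f,g$ in $\Q[t]$ implies $\tilde f,\tilde g$ have \emph{no common zero} in $\AAA^2(\overline{\Q})$ is false. The point $(0,0)$ is generically a common zero (and at least one of $\tilde f,\tilde g$ is a weighted homogenization of positive degree, so vanishes there), and more generally two coprime polynomials in two variables can share finitely many common zeros. What you actually want, and what does follow from (i) at most one of $\tilde f,\tilde g$ being divisible by $b$ and (ii) any other common irreducible factor pulling back to a common factor of $f$ and $g$, is simply that $\tilde f$ and $\tilde g$ are coprime in $\Q[a,b]$. That is precisely the hypothesis of Lemma~\ref{lemma:hompolygcd}, so the rest of the argument goes through unchanged.
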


\begin{proof}
Let $F(a,b^m)=b^{\frac{4n}{\nu}}f(a/b^m)$, $G(a,b^m)=b^{\frac{6n}{\nu}}g(a/b^m)$, and $D(a,b^m)=b^{\frac{12n}{\nu}}(4f(a/b^m)^3+27g(a/b^m)^2)$.
Then
\[\Disc_{\min}(E_{A,B})\ll c^6D(a,b^m)\leq H^{6\epsilon}D(a,b^m).\]
Applying Lemma~\ref{lemma:rebinaryformarea} to $D$, we may assume that $ H^{1-\epsilon}\ll |D(a,b^m)|\ll H$,
so
\[\Disc_{\min}(E_{A,B})\ll H^{1+6\epsilon}.\]

As for the conductor, we see from~\eqref{eq:ineqNrad} that
\[N(E_{A,B})\gg\frac{\rad(D(a,b^m))}{\rad(\gcd(F(a,b^m),G(a,b^m))}.\]
Lemma~\ref{lemma:hompolygcd} allows us to assume that $\rad(\gcd(F(a,b^m),G(a,b^m))\leq H^{\epsilon}$.
Taking $\epsilon>0$ is small enough, the claim follows from applying Theorem~\ref{theorem:weightedhom} with $F=D$, $d=\frac{12n}{\nu m}$, and $X=H$.
\end{proof}

\begin{proof}[Proof of Theorem~\ref{theorem:parameter height}]
Put $\nu=1$ and $c=1$ into Lemma~\ref{lemma:parameter height} and note that the curve $\mathcal{E}_t:y^2=x^3+f(t)x+g(t)$ with $t=\frac{a}{b^m}$ is $\Q$-isomorphic to
$y^2=x^3+F(a,b^m)x+G(a,b^m)$.
\end{proof}

\subsection{Ordering by naive height}
Now we will use the setup in~\cite{HarronSnowden} to convert the ordering from parameter height to naive height.
\begin{definition}
Suppose $(f,g)\in\mathcal{P}_\nu$ for some $\nu\in\left\{1,2\right\}$.
Let $S(H)\coloneqq S_{f,g,\nu}(H)$ be the set of pairs $(A,B)\in\Z^2$ satisfying all of the following conditions:
\begin{itemize}
\item $4A^3+27B^2\neq 0$,
\item $\gcd(A^3,B^2)$ is not divisible by any $12$th power,
\item $|A|\leq (\frac{1}{4}H)^{\frac{1}{3}}$ and $|B|\leq (\frac{1}{27}H)^{\frac{1}{2}}$,
\item there exists $u,t\in\Q$ such that $A=u^{\frac{4}{\nu}}f(t)$ and $B=u^{\frac{6}{\nu}}g(t)$.
\end{itemize}
\end{definition}

Theorem~\ref{theorem:szpiroell} is a special case of Theorem~\ref{theorem:szpiro1}.
\begin{theorem}\label{theorem:szpiro1}
Let $(f,g)\in\mathcal{P}_\nu$, $\nu\in\left\{1,2\right\}$. Let $m, n$ be the integers that satisfy~\eqref{eq:mndef}.
Fix constants $\beta_1<\lambda_{\nu}(f,g)\leq \kappa_{\nu}(f,g)<\beta_2$.
Then for all $H\geq 3$ we have
\[\frac{\#\left\{(A,B)\in S(H): \sigma(E_{A,B})\notin (\beta_1,\beta_2)\right\}}{\#S(H)}\ll \frac{(\log\log H)^2}{(\log H)^{\frac{1}{2}}},\]
where the implied constant depends on $(f,g)$ and $(\beta_1,\beta_2)$.
\end{theorem}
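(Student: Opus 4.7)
The plan is to parameterize $S(H)$ by integer triples $(a,b,c)$ and reduce the statement to Lemma~\ref{lemma:parameter height} together with the unconditional clause of Theorem~\ref{theorem:weightedhom}. Following the framework of Harron--Snowden, every $(A,B)\in S(H)$ admits a representation
\[(A,B)=\bigl(b^{4n/\nu}c^2f(a/b^m),\,b^{6n/\nu}c^3g(a/b^m)\bigr)\]
with $(a,b,c)\in\ZZ^3$ subject to coprimality and minimality constraints. The height bound $H(E_{A,B})\le H$ forces $(a,b)\in\mathcal{B}((H/c^6)^{\nu m/(12n)},(H/c^6)^{\nu/(12n)})$ and confines $c$ to a sparse subset of $[1,H^{\epsilon_0}]$ for some small $\epsilon_0>0$, and the Harron--Snowden analysis produces the matching lower bound $\#S(H)\gg H^{\nu(m+1)/(12n)}$ against which the exceptional contributions will be measured.

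For the upper bound $\sigma(E_{A,B})<\beta_2$, fix an auxiliary constant $\beta\in(\kappa_\nu(f,g),\beta_2)$ and apply Lemma~\ref{lemma:parameter height} with the height parameter taken to be $H/c^6$, uniformly for each $c\le H^{\epsilon_0}$. For each fixed $c$ this produces at most $O((H/c^6)^{\nu(m+1)/(12n)}(\log\log H)^2/(\log H)^{1/2})$ bad pairs $(a,b)$. Summing over $c$, the tail series is either convergent or absorbed by at most a factor of $\log H$, so the total exceptional count divided by $\#S(H)$ remains $O((\log\log H)^2/(\log H)^{1/2})$.

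For the lower bound $\sigma(E_{A,B})>\beta_1$ the crucial input is the unconditional ``moreover'' clause of Theorem~\ref{theorem:weightedhom} applied to the binary form $D(x,y)=y^{12n/(\nu m)}d(x/y^m)$, which yields $\rad(D(a,b^m))^{\lambda_\nu(f,g)}\ll|D(a,b^m)|$ for every $(a,b)$ in the relevant box. Combined with the identity $\Disc(E_{A,B})=-16c^6D(a,b^m)$ and the sandwich~\eqref{eq:ineqNrad} between $N(E_{A,B})$ and $\rad(\Disc_{\min}(E_{A,B}))$, this produces $\sigma(E_{A,B})\ge\lambda_\nu(f,g)-o(1)$ once four sources of loss are controlled: (i)~the multiplicative inflation coming from $c^6$, absorbed by the restriction $c\le H^{\epsilon_0}$; (ii)~the contribution of the primes $\{2,3\}$ to $|\Disc_{\min}|$ versus $|\Disc|$, handled by Lemma~\ref{lemma:smoothpart} with $t=3$; (iii)~the factor $\rad(\gcd(A,B))$ appearing in~\eqref{eq:ineqNrad}, controlled by Lemma~\ref{lemma:hompolygcd} applied to $F(a,b^m)$ and $G(a,b^m)$; and (iv)~the requirement that $|D(a,b^m)|$ actually be of the expected order of magnitude, enforced by Lemma~\ref{lemma:rebinaryformarea}. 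Each of the exceptional sets arising from (ii)--(iv) has size within the claimed bound.

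The principal obstacle is the calibration of $\epsilon_0$: it must be chosen small enough that the cumulative loss from (i)--(iv) consumes strictly less than the slack $\min\{\beta_2-\kappa_\nu(f,g),\,\lambda_\nu(f,g)-\beta_1\}$ in the Szpiro ratio inequalities, yet large enough that $c$ still ranges over every parameterization needed to exhaust the minimal models in $S(H)$. Once $\epsilon_0$ is fixed, combining the exceptional counts from the upper- and lower-bound arguments and dividing by $\#S(H)$ delivers the claimed bound $O((\log\log H)^2/(\log H)^{1/2})$.
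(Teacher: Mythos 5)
Your proposal is correct and takes essentially the same route as the paper: parameterize $S(H)$ by $(a,b,c)$ via Lemma~\ref{lemma:ratcon}, restrict to $c\le H^{\epsilon_0}$, apply Lemma~\ref{lemma:parameter height} for the upper bound on $\sigma$, and combine Lemma~\ref{lemma:hompolygcd}, Lemma~\ref{lemma:rebinaryformarea}, \eqref{eq:ineqNrad}, and the ``moreover'' clause of Theorem~\ref{theorem:weightedhom} for the lower bound, dividing by $\#S(H)\gg H^{\nu(m+1)/(12n)}$ from Theorem~\ref{theorem:HS}. One small correction: the height bound alone only forces $c\ll H^{1/6}$, not $c\le H^{\epsilon_0}$; as in the paper, the contribution of $c>H^{\epsilon_0}$ must be disposed of by a trivial count of the box $\mathcal{B}\bigl((H/c^6)^{\nu m/(12n)},(H/c^6)^{\nu/(12n)}\bigr)$ rather than excluded by the parameterization itself.
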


If $(A,B)\in S(H)$, then $E_{A,B}:y^2=x^3+Ax+B$ is a minimal short Weierstrass equation with height at most $H$.
The discriminant of $E_{A,B}$ is
\[\Disc(E_{A,B})=-16(4A^3+27B^2)=-16u^{12}d(t).\]

We collect some results needed for the proof of Theorem~\ref{theorem:szpiro1}.
The first is the order of $S(H)$ obtained in~\cite{HarronSnowden}.
\begin{theorem}[{\cite[Theorem~1.7 and Proposition~4.1]{HarronSnowden}}]\label{theorem:HS}
Let $(f,g)\in\mathcal{P}_\nu$, $\nu\in\left\{1,2\right\}$. Let $m, n$ be the integers that satisfy~\eqref{eq:mndef}.
Then 
\[\# S(H)\asymp H^{\frac{\nu(m+1)}{12n}}.\]
\end{theorem}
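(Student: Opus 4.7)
My plan is to realize $S(H)$ as the image of a finite-to-one map from a family of integer tuples, thus reducing the count to a lattice-point count in a bounded region of a (weighted) Euclidean space. To begin, I unwind the existential ``there exist $u, t \in \mathbb{Q}$ with $A = u^{4/\nu} f(t)$ and $B = u^{6/\nu} g(t)$''. Writing $t = a/b^m$ in lowest terms (with $\gcd(a, b^m)$ not divisible by any $m$-th power) and absorbing the $b$-denominator of $u$ into a positive integer twist parameter $c$, every pair $(A, B) \in S(H)$ admits an essentially unique presentation
\[
A \;=\; c^{4/\nu}\,F(a, b), \qquad B \;=\; c^{6/\nu}\,G(a, b),
\]
where $F(a,b) \coloneqq b^{4n/\nu}f(a/b^m)$ and $G(a,b) \coloneqq b^{6n/\nu}g(a/b^m)$ are integer binary forms (after clearing the content of $(f, g)$), weighted-homogeneous with $a$ of weight $m$ and $b$ of weight $1$, of weighted degrees $4n/\nu$ and $6n/\nu$. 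The $12$th-power-free minimality of $\gcd(A^3, B^2)$ forces $c \in \{\pm 1\}$ when $\nu = 1$ (since otherwise $c^{12}$ would divide $\gcd(A^3, B^2)$), and restricts $c$ to be essentially squarefree when $\nu = 2$.

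Next I translate the bounds $|A| \leq (H/4)^{1/3}$ and $|B| \leq (H/27)^{1/2}$ into a constraint on $(a, b, c)$. Using the weighted homogeneity, both bounds collapse to the single inequality
\[
c \cdot \max\bigl(|a|^{1/m},\,|b|\bigr)^{n} \;\ll\; H^{\nu/12}.
\]
For each fixed $c$ the set of admissible $(a, b)$ has area $\asymp (H^{\nu/12}/c)^{(m+1)/n}$, so
\[
\#S(H) \;\asymp\; \sum_{c\geq 1}\left(\frac{H^{\nu/12}}{c}\right)^{(m+1)/n}.
\]
The defining property of $\mathcal{P}_\nu$ (either $m = 1$ or $n = 1$) guarantees convergence: for $\nu = 2$ we have $n = 1$ and $(m+1)/n = m + 1 \geq 2$, making the $c$-sum finite; for $\nu = 1$ the minimality pins $c = 1$, leaving just a single term. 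In either case the total is $\asymp H^{\nu(m+1)/(12n)}$.

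Finally, the $12$th-power-free minimality of $\gcd(A^3, B^2)$ and the non-degeneracy $4A^3 + 27B^2 \neq 0$ are enforced by M\"obius inversion combined with the geometric sieve (Lemma~\ref{lemma:geoms}), exploiting that $F, G$ are coprime in $\mathbb{Q}[x, y]$ so the simultaneous vanishing locus $\{F \equiv G \equiv 0 \bmod p\}$ is codimension at least $2$ in the affine plane. The main obstacle is controlling the ``bad'' $(a, b)$ where the intrinsic content of $(F(a, b), G(a, b))$ is anomalously large, since this could let the twist parameter $c$ absorb more prime-power than expected and artificially inflate the lattice count. The coprimality of $F$ and $G$ confines this bad locus to have density $\ll p^{-2}$ at each prime $p$, and summing the resulting corrections over $p \leq H$ via the geometric sieve produces only a lower-order error, preserving the asymptotic $\asymp H^{\nu(m+1)/(12n)}$.
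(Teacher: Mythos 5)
First, note that the paper does not prove this statement at all: it is imported verbatim from Harron--Snowden (their Theorem~1.7 and Proposition~4.1), and the only in-paper counterpart of your intermediate structural claim is Lemma~\ref{lemma:ratcon}, which is likewise quoted from \cite{HarronSnowden}. So what you are really doing is reconstructing the Harron--Snowden counting argument, and judged on its own terms your sketch has two genuine gaps, both in the upper bound. (i) You assert that every $(A,B)\in S(H)$ has a presentation $A=c^{4/\nu}F(a,b)$, $B=c^{6/\nu}G(a,b)$ with $c$ a positive integer, obtained by ``absorbing the $b$-denominator of $u$''. That $u$ must have the shape $qcb^{n}$ with $q$ ranging over a \emph{finite} set of rationals is precisely the content of Lemma~\ref{lemma:ratcon} and is not automatic: a priori the denominator of $u$ could involve primes dividing the values $f(t)$, $g(t)$ rather than just $b$, and ruling this out needs the integrality/content analysis of Harron--Snowden (their Lemmas~2.5--2.6). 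Your argument that minimality forces $c=\pm1$ when $\nu=1$ is fine once this structure is in hand, but the structure itself is assumed, not proved. (ii) You claim the two height bounds ``collapse'' to $c\,\max(|a|^{1/m},|b|)^{n}\ll H^{\nu/12}$ ``using the weighted homogeneity''. Weighted homogeneity gives only the easy direction (small $(a,b)$ implies small $|A|,|B|$). The direction you actually need for the upper bound -- that $|A|\leq (H/4)^{1/3}$ and $|B|\leq(H/27)^{1/2}$ force $|a|\ll (H/c^{6})^{\nu m/(12n)}$ and $|b|\ll (H/c^{6})^{\nu/(12n)}$, i.e.~\eqref{eq:rangeab} -- is false for a single form (it can be small at large arguments near its real zero locus) and requires the coprimality of $f$ and $g$, e.g.~via a resultant/elimination identity showing that $|F(a,b)|$ and $|G(a,b)|$ cannot both be small unless $(a,b)$ is small. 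This coprimality input appears nowhere in your reduction, only later in your sieve step, where it is used for a different purpose.

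The lower bound is sketched more plausibly: your plan to control the content of $(F(a,b),G(a,b))$ via the geometric sieve (Lemma~\ref{lemma:geoms}) so that passing to minimal models collapses only boundedly many $(a,b)$ is a workable route, but the ``essentially unique presentation'' (finite-to-one-ness of $(a,b)\mapsto(A,B)$, e.g.~because $f^{3}/g^{2}$ is a non-constant rational function in $t$) is asserted rather than argued. In short, the skeleton matches the Harron--Snowden proof that the paper cites, but as written the key steps corresponding to Lemma~\ref{lemma:ratcon} and to the bound~\eqref{eq:rangeab} are missing, and these are where the real work lies; to make the argument self-contained you would need to supply them (or simply cite \cite{HarronSnowden}, as the paper does).
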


The following lemma combines~\cite[Lemma~2.5 and Lemma~2.6]{HarronSnowden} and their analogues used in the proof of~\cite[Proposition~4.1]{HarronSnowden}.
\begin{lemma}[{\cite{HarronSnowden}}]\label{lemma:ratcon}
Let $(f,g)\in\mathcal{P}_\nu$, $\nu\in\left\{1,2\right\}$. Let $m, n$ be the integers that satisfy~\eqref{eq:mndef}.
There exists a finite set $Q$ of non-zero rational numbers such that the following holds.
Suppose $(u,t)\in\Q^2$ such that $\left(u^{\frac{4}{\nu}}f(t),u^{\frac{6}{\nu}}g(t)\right)\in S(H)$. Write $t=\frac{a}{b^m}$, where $a,b\in\Z$ such that $b>0$ and $\gcd(a,b^m)$ is not divisible by any $m$-th power.
Then
\[u=qcb^n,\text{ where }q\in Q\text{ and }
\begin{cases}
\hfil c=1&\text{if }\nu=1,\\
\hfil c\text{ is a positive squarefree integer}&\text{if }\nu=2.\end{cases}\]
Moreover
\begin{equation}\label{eq:rangeab}
a\ll (H/c^6)^{\frac{\nu m}{12n}}\qquad\text{and}\qquad b\ll (H/c^6)^{\frac{\nu }{12n}},
\end{equation}
where the implied constants depend only on $f$ and $g$.
\end{lemma}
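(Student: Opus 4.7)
The plan is to closely follow Harron--Snowden, whose Lemma~2.5 treats the case $\nu=1$, whose Lemma~2.6 treats $\nu=2$, and whose proof of Proposition~4.1 supplies the range estimates. I would give a unified presentation. First I would introduce the weighted-homogeneous forms
\[F(a,b):=b^{4n/\nu}f(a/b^m),\qquad G(a,b):=b^{6n/\nu}g(a/b^m),\]
which by the hypothesis $\nu\max\{\tfrac14\deg f,\tfrac16\deg g\}=n/m$ lie in $\Z[a,b]$ (after clearing a fixed common denominator of $f,g$) and are weighted-homogeneous of weighted degrees $4n/\nu$ and $6n/\nu$ with weights $(m,1)$ on $(a,b)$. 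Substituting $u=vb^n$ when $\nu=1$, or $u=vb$ when $\nu=2$ (so $n=1$), the parameterization becomes
\[A=v^{4/\nu}F(a,b),\qquad B=v^{6/\nu}G(a,b),\]
and the problem reduces to determining the shape of the rational number $v$.

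Next I would write $v=p/q$ in lowest terms with $q>0$ and treat the two factors separately. Integrality of $A,B$ forces $q^{4/\nu}\mid F(a,b)$ and $q^{6/\nu}\mid G(a,b)$, hence $q\mid\gcd(F(a,b),G(a,b))$. Since $f,g$ are coprime in $\Q[t]$, a Bezout identity $P(t)f(t)+Q(t)g(t)=R$ homogenizes to show that $\gcd(F(a,b),G(a,b))$ divides a fixed integer depending only on $f,g$ (a point requiring some care, because the $m$-th-power-free condition permits $\gcd(a,b)>1$, but the contribution of such common factors is controlled via the bound $v_p(a)<m$). This restricts $q$ to a finite set. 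For the numerator, minimality of $(A,B)$ is decisive: any prime $r\mid p$ satisfies $v_r(A)\geq(4/\nu)v_r(p)$ and $v_r(B)\geq(6/\nu)v_r(p)$, so the minimality hypothesis that $\gcd(A^3,B^2)$ is $12$-th-power-free fails as soon as $v_r(p)\geq\nu$. For $\nu=1$ this forces $p=\pm1$, giving $v\in Q$ finite and $u=qb^n$. For $\nu=2$ it forces $p$ squarefree, so writing $p=\pm c$ with $c>0$ squarefree and absorbing the sign into $q$ yields $v=qc$ and $u=qcb$.

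For the range estimates, set $\Lambda:=\max(|a|^{1/m},|b|)$. The weighted homogeneity gives $|F(a,b)|\ll\Lambda^{4n/\nu}$ and $|G(a,b)|\ll\Lambda^{6n/\nu}$, and the matching lower bound
\[\max\bigl(|F(a,b)|^3,\,|G(a,b)|^2\bigr)\gg\Lambda^{12n/\nu}\]
follows from the fact that $f,g$ share no root in $\overline{\Q}$, so that $F,G$ have no common zero in the weighted projective space with weights $(m,1)$; a compactness/Liouville argument then gives the bound with implied constant depending only on $f,g$. On the other hand, $(A,B)\in S(H)$ combined with $|v|\asymp c$ yields $c^{12/\nu}\max(|F|^3,|G|^2)\ll H$. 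Combining these produces $\Lambda\ll(H/c^6)^{\nu/(12n)}$, hence $|b|\ll(H/c^6)^{\nu/(12n)}$ and $|a|\ll\Lambda^m\ll(H/c^6)^{\nu m/(12n)}$.

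The principal obstacle is making the lower bound $\max(|F|^3,|G|^2)\gg\Lambda^{12n/\nu}$ effective with implied constants depending only on $f,g$, together with the careful bookkeeping needed in the resultant argument to bound $\gcd(F(a,b),G(a,b))$ in the presence of the possibly non-trivial $\gcd(a,b)$ allowed by the $m$-th-power-free condition on $(a,b)$.
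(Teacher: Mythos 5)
Your overall strategy is the right one: the paper does not prove this lemma at all (it is quoted from Harron--Snowden, combining their Lemmas~2.5, 2.6 and the proof of their Proposition~4.1), and your outline --- substitute $u=vb^{n}$, bound the numerator of $v$ by the $12$th-power-free condition, bound the denominator by coprimality of $f,g$, and get the ranges from a weighted-homogeneity/compactness lower bound $\max(|F|^{3},|G|^{2})\gg\Lambda^{12n/\nu}$ --- is exactly the shape of their argument. The numerator step and the height comparison are fine as you describe them (the compactness argument works because $F,G$ have no common real zero away from the origin, using that at least one of $\deg f=\frac{4n}{\nu m}$, $\deg g=\frac{6n}{\nu m}$ holds, and the exponent bookkeeping $c^{12/\nu}\Lambda^{12n/\nu}\ll H$ does give \eqref{eq:rangeab}).

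There is, however, a genuine gap in the denominator step. The claim that $\gcd(F(a,b),G(a,b))$ divides a fixed integer depending only on $f,g$ is false whenever $m>1$, which is precisely the delicate case you flag (it occurs for the $C_3$ and $C_4$ families). Concretely, for the $C_3$ parameterisation one has, after clearing denominators, $3F(a,b)=b(24a-b^{3})$ and $27G(a,b)=432a^{2}-72ab^{3}+2b^{6}$; taking $a=b$ equal to a product of $k$ distinct primes $\geq 5$ (which satisfies the condition that $\gcd(a,b^{3})$ is cube-free) makes $\gcd(3F,27G)$ divisible by all $k$ primes, so it is not bounded. Consequently "$q\mid\gcd(F,G)$, and the gcd is bounded, hence $q$ is bounded" does not establish the conclusion, and the parenthetical appeal to $v_p(a)<m$ does not repair it at the level of the gcd. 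What is actually needed is a prime-by-prime argument using \emph{both} divisibilities with their exponents: $v_p(q)\leq\min\bigl(\tfrac{\nu}{4}v_p(F),\tfrac{\nu}{6}v_p(G)\bigr)$, and one must show this is zero outside a fixed finite set of primes and bounded on that set. This uses the structure $F=b^{\alpha}F^{*}(a,b^{m})$, $G=b^{\beta}G^{*}(a,b^{m})$ with $\min(\alpha,\beta)=0$ and $F^{*},G^{*}$ the coprime homogenisations of $f,g$: at primes $p\nmid b$ one uses B\'ezout/resultant identities for $f,g$; at primes $p\mid b$ one uses $v_p(a)\leq m-1$ to see that (for all but finitely many $p$) the pure power of $a$ in whichever of $F^{*},G^{*}$ has the full degree forces $\min(v_p(F),v_p(G))$ to be too small to contribute to $q$, and the finitely many exceptional primes are handled with bounded exponents and absorbed into $Q$. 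In the example above this is what saves the day: $v_p(F)=2<4$ even though $p\mid\gcd(F,G)$. So the denominator bound, not the lower bound for $\max(|F|^{3},|G|^{2})$, is the step where the real work lies, and as written your argument does not yet supply it.
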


We are now ready to prove Theorem~\ref{theorem:szpiro1}.
\begin{proof}[Proof of Theorem~\ref{theorem:szpiro1}]
Theorem~\ref{theorem:HS} gives $\#S(H)\gg H^{\frac{\nu(m+1)}{12n}}$, so it suffices to bound
$\#\left\{(A,B)\in S(H): \sigma(E_{A,B})\notin (\beta_1, \beta_2)\right\}$.

Given a positive integer $c$, take
\[\tilde{H}\asymp H/c^6
\] such that any $(a,b)$ satisfying~\eqref{eq:rangeab} lies in $\mathcal{B}(\tilde{H}^{\frac{\nu m}{12n}}, \tilde{H}^{\frac{\nu}{12n}})$.
If $\nu=1$, then $c=1$.
If $\nu=2$, by assumption $n=1$, so
\[\sum_{c> H^{\frac{1}{6}\epsilon}}\#\mathcal{B}(\tilde{H}^{\frac{ m}{6}}, \tilde{H}^{\frac{1}{6}})\ll \sum_{c> H^{\frac{1}{6}\epsilon}} (H^{\frac{1}{6}\epsilon}/c)^{m+1}\ll H^{\frac{m(1-\epsilon)+1}{6}}.\]
Therefore we may assume $c\leq H^{\frac{1}{6}\epsilon}$. Since $\tilde{H}\gg H/c^6\geq H^{1-\epsilon}$, we also have $c\ll \tilde{H}^{\frac{\epsilon}{6(1-\epsilon)}}$.

First we bound the exceptions to $\sigma(E_{A,B})\geq\beta_2$.
 By Lemma~\ref{lemma:ratcon}, if $(A,B)\in S(H)$, then 
\[
A=(qcb)^{\frac{4}{\nu}}f(a/b^m)\qquad\text{and}\qquad B=(qcb)^{\frac{6}{\nu}}g(a/b^m),
\]
where $a,b$ satisfy~\eqref{eq:rangeab}, $q\neq 0$ lies in some finite set, and $c$ is squarefree.
By Lemma~\ref{lemma:parameter height}, choosing $\epsilon$ is small enough, we have uniformly for all $1\leq c\leq H^{\frac{1}{6}\epsilon}$, the bound
\begin{equation}\label{eq:beta2}
\#\left\{(a,b)\in \mathcal{B}(\tilde{H}^{\frac{\nu m}{12n}}, \tilde{H}^{\frac{\nu}{12n}}): \sigma(E_{A,B})\geq\beta_2\right\}\ll \tilde{H}^{\frac{\nu(m+1)}{12n}}\frac{(\log\log \tilde{H})^2}{(\log \tilde{H})^{\frac{1}{2}}}.
\end{equation}

Next we bound the exceptions to $\sigma(E_{A,B})\leq\beta_1$.
Let $F(a,b^m)=(qb^n)^{\frac{4}{\nu}}f(a/b^m)$, $G(a,b^m)=(qb^n)^{\frac{6}{\nu}}g(a/b^m)$, and $D(a,b^m)=4F(a,b^m)^3+27G(a,b^m)^2$, so when $(A,B)\in S(H)$, we have
\[\Disc(E_{A,B})=-16c^6D(a,b^m)
\]
We can assume by Lemma~\ref{lemma:hompolygcd} that $\rad(\gcd(A,B))<\tilde{H}^{\epsilon}|c|\ll \tilde{H}^{\epsilon(1+\frac{1}{6(1-\epsilon)})}$ since $F$ and $G$ are coprime polynomials over $\Q$, and $|c|\ll \tilde{H}^{\frac{\epsilon}{6(1-\epsilon)}}$.
By~\eqref{eq:ineqNrad}, the conductor is bounded by
\begin{equation}\label{eq:ineqND}
N(E_{A,B})\ll \rad(\gcd(A,B))\cdot \rad(\Disc(E_{A,B}))
 \ll \tilde{H}^{\epsilon(1+\frac{1}{3(1-\epsilon)})}\cdot \rad(D(a,b^m)), 
 \end{equation}
under the above assumptions.
Then from~\eqref{eq:ineqND} and Theorem~\ref{theorem:weightedhom}, we have
\[ \tilde{H}^{-\epsilon(1+\frac{1}{3(1-\epsilon)})}N(E_{A,B})\ll \rad(D(a,b^m))\ll \tilde{H}^{\frac{1}{\lambda_{\nu}(f,g)}}.\]
Applying Lemma~\ref{lemma:rebinaryformarea} to $D(a,b^m)$, we may assume that
\[\Disc_{\min}(E_{A,B})\gg \prod_{p\neq 2,3}p^{v_p(D(a,b^m))}\gg \tilde{H}^{1-\epsilon},\]
as long as $(A,B)\in S(H)$.
Therefore taking $\epsilon$ small enough such that $\frac{1}{\lambda_{\nu}(f,g)}+\epsilon(1+\frac{1}{3(1-\epsilon)})<\frac{1-\epsilon}{\beta_1}$, then putting together all the error terms,
we see that the number of exceptions to 
$\sigma(E_{A,B})>\beta_1$, uniformly for all $1\leq c\leq H^{\frac{1}{6}\epsilon}$, is bounded by
\begin{equation}\label{eq:beta1}
\#\left\{(a,b)\in \mathcal{B}(\tilde{H}^{\frac{\nu m}{12n}}, \tilde{H}^{\frac{\nu}{12n}}): \sigma(E_{A,B})\leq\beta_1,\ (A,B)\in S(H)\right\}\ll\frac{\tilde{H}^{\frac{\nu(m+1)}{12n}}}{\log \tilde{H}}.\end{equation}

Combining~\eqref{eq:beta2} and~\eqref{eq:beta1}, we have
\[ \#\left\{(a,b)\in \mathcal{B}(\tilde{H}^{\frac{\nu m}{12n}}, \tilde{H}^{\frac{\nu}{12n}}): \sigma(E_{A,B})\notin(\beta_1,\beta_2),\ (A,B)\in S(H)\right\}\ll \tilde{H}^{\frac{\nu(m+1)}{12n}}\frac{(\log\log \tilde{H})^2}{(\log \tilde{H})^{\frac{1}{2}}}.\]
When $\nu=1$, $c=1$ so we are done once we put in $\tilde{H}\ll H$.
When $\nu=2$, we sum the bound over $c$ to get
\[\#\left\{(A,B)\in S(H): \sigma(E_{A,B})\notin(\beta_1,\beta_2)\right\}\ll 
\sum_{1\leq c\leq H^{\frac{1}{6}\epsilon}}\left(\frac{H}{c^6}\right)^{\frac{m+1}{6}}\frac{(\log\log (H/c^6))^2}{(\log (H/c^6))^{\frac{1}{2}}}.\]
Since $m\geq 1$, the exponent of $c$ is $-(m+1)\leq -2$, so the sum over $c$ in the case $\nu=2$ converges and we obtain the required upper bound.
\end{proof}

\begin{proof}[Proof of Theorem~\ref{theorem:szpiroell}]
Theorem~\ref{theorem:szpiroell} is a corollary of Theorem~\ref{theorem:szpiro1} with $\nu=1$.
\end{proof}

\section{Szpiro ratio in families with prescribed torsion}
The goal of this section is to prove Theorem~\ref{theorem:SzpiroTorsheight} using Theorem~\ref{theorem:szpiro1}.
The possible torsion subgroups of elliptic curves over $\Q$ were classified by Mazur.
\begin{theorem}[{Mazur~\cite{Mazur,Mazur2}}]
Let $E$ be a rational elliptic
curve. Then $E(\Q)_{\mathrm{tors}}$ is isomorphic to one of the following groups:
\begin{equation}\label{eq:torgroup}
\begin{cases}
\hfil C_{n} & \text{for }n=1,2,\dots,10,12,\\
C_{2}\times C_{2n} & \text{for }n=1,2,3,4.
\end{cases}
\end{equation}
\end{theorem}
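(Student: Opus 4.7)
The plan is not to reprove this deep theorem of Mazur but to outline how one organises its proof; in the paper itself it would simply be cited. At the highest level, one identifies each potential torsion subgroup $G$ with a moduli interpretation: non-cuspidal $\Q$-points on the modular curve $Y_1(N)$ (respectively $Y(2,2N)$) classify elliptic curves $E/\Q$ equipped with a rational point of order $N$ (respectively a subgroup isomorphic to $C_2\times C_{2N}$). The classification~\eqref{eq:torgroup} then splits into two halves: an existence half producing infinitely many curves for each listed group, and a non-existence half forbidding all other groups.

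For the existence half, the modular curves corresponding to the groups in~\eqref{eq:torgroup} are all of genus $0$ and possess a rational cusp, hence are $\Q$-isomorphic to $\PP^1$. Writing down an explicit isomorphism produces a one-parameter family of the form $\mathcal{E}_t:y^2=x^3+f(t)x+g(t)$ with the prescribed torsion, which is precisely the input~\eqref{eq:paraE} exploited throughout Section~\ref{section:oneparfam}. This part is classical and constructive, going back to Kubert's tables of universal elliptic curves with prescribed torsion.

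For the non-existence half, one first reduces to ruling out primitive prime-power torsion: $C_p$ for $p=11$ and $p\geq 13$, together with $C_{2^k}$ for $k\geq 4$, $C_{3^k}$ for $k\geq 3$, and the corresponding obstructions in the $C_2\times C_{2n}$ list. The low-prime-power cases can be handled by explicit analysis of the curves $X_1(16), X_1(27),\dots$, each of which is already an elliptic curve of Mordell--Weil rank $0$ whose rational points can be enumerated directly. The decisive and hardest case is prime $p\geq 11$: here Mazur studies the Jacobian $J_0(p)$ together with its Eisenstein quotient $\tilde J$, proves $\tilde J(\Q)$ is finite via a descent guided by the Eisenstein ideal, and then applies a formal immersion argument at a well-chosen auxiliary prime $\ell$ to force every non-cuspidal rational point of $X_0(p)$ to reduce to a cusp modulo $\ell$, yielding the desired contradiction. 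Lifting the conclusion from $X_0(p)$ to $X_1(p)$ is comparatively routine. The main obstacle, well beyond anything a short sketch could supply, is precisely this deep input from Mazur's Eisenstein-ideal machinery; accordingly the paper simply cites~\cite{Mazur,Mazur2}.
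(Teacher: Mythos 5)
You correctly identify that the paper simply cites Mazur's theorems \cite{Mazur,Mazur2} without proof, and your outline of the argument --- moduli interpretation via $X_1(N)$, genus-$0$ parametrizations for the existence half, Eisenstein-ideal descent plus a formal immersion argument for the decisive prime case, and explicit rank-$0$/low-genus computations for the remaining levels --- is a faithful summary of how the proof is organised in the literature. The one small imprecision is the claim that the non-existence half reduces purely to prime-power torsion: one must also dispose of composite levels such as $X_1(14)$, $X_1(15)$, $X_1(20)$, $X_1(21)$, $X_1(24)$ and the analogues $X_1(2,2N)$ for $N\geq 5$, though these all yield to the same low-genus and rank-$0$ methods you describe.
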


For any $G$ permitted by~\eqref{eq:torgroup}, we are interested in the family of elliptic curves $E/\Q$ with torsion subgroup $E(\Q)_{\mathrm{tors}}\cong G$. We treat the following four cases separately:
\begin{itemize}
\item $G\neq C_3,\ C_2,\ C_2\times C_2$,
\item $G= C_3$,
\item $G= C_2\times C_2$,
\item $G= C_2$.
\end{itemize}

\subsection{When \texorpdfstring{$G\neq C_3,\ C_2,\ C_2\times C_2$}{G is not C\_3, C\_2, C\_2 x C\_2}}
Suppose $G$ is one of the groups in~\eqref{eq:torgroup} and $G\neq C_3,\ C_2,\ C_2\times C_2$.
By~\cite[Proposition~3.2 and Proposition~3.3]{HarronSnowden}, there exists $f,g\in\Q[t]$ coprime with degrees given in Table~\ref{ta:alpT}, such that whenever $E(\Q)_{\mathrm{tors}}\cong G$, then $E$ must be $\Q$-isomorphic to
\[\mathcal{E}_t:y^2=x^3+f(t) x+g(t)\]
for some $t\in\Q$.
The explicit polynomials $f(t)$ and $g(t)$ given in Table~\ref{ta:fgT} can be recovered by dehomogenising the polynomials from~\cite[Proposition~4.3]{Barrios} and~\cite[Tables 4 to 6]{Barrios}.
In Table~\ref{ta:gamT} we give the factorisations of $d(t)=4f(t)^3+27g(t)^2$.
Then we can apply Theorem~\ref{theorem:szpiro1} with $\nu=1$.
Since every irreducible factor of $D_i$ has degree at most $3$ in all cases, we can take $w_i=1$ for all $i$ and $\beta_G =\lambda_{1}(f,g)=\kappa_{1}(f,g)$. The values of \[\beta_G=\frac{12n}{\delta_0+\delta_1+\dots +\delta_{\psi}}\]
are computed in Table~\ref{ta:alpT} from the polynomials in Table~\ref{ta:fgT} and Table~\ref{ta:gamT}, noting that $e_0=\frac{12n}{ m}-\deg d\geq 0$.
This proves Theorem~\ref{theorem:SzpiroTorsheight} when $G\neq C_3,\ C_2,\ C_2\times C_2$.

\subsection{When \texorpdfstring{$G= C_3$}{G=C\_3}}
By~\cite[Lemma~3.5 and Proposition~3.6]{HarronSnowden}, other than exceptional curves of the form $y^2=x^3+b^2$ with $b\in\Z$, any elliptic curve admitting a $3$-torsion point admits an equation of the form
\[y^2=x^3+f(t) x+g(t)\]
with $f(t)=2t-\frac{1}{3}$ and $g(t)=t^2-\frac{2}{3}t+\frac{2}{27}$.
There are $O(H^{\frac{1}{4}})$-many exceptional curves up to height $H$.
Scaling $t$ appropriately transforms $f$ and $g$ to the polynomials given in Table~\ref{ta:fgT}. Then as in the previous case, the case when $G= C_3$ follows from Theorem~\ref{theorem:szpiro1} with $\nu=1$.

\subsection{When \texorpdfstring{$G= C_2\times C_2$}{G=C\_2 x C\_2}}
By~\cite[Proposition~4.2]{HarronSnowden}, an elliptic curve given by
$y^2=x^3+Ax+B$ has full rational $2$-torsion if and only if there exist $u,t\in\Q$, such that $A=u^2f(t)$ and $B=u^3g(t)$, with $f(t)=-\frac{1}{3}(t^2-t+1)$ and $g(t)=\frac{1}{27}(-2t^3+3t^2+3t-2)$.
We compute
\[d(t)=4f(t)^3+27g(t)^2=-t^2 (t - 1)^2.\]
Apply Theorem~\ref{theorem:szpiro1} with $\nu=2$ proves Theorem~\ref{theorem:SzpiroTorsheight} when $G= C_2\times C_2$.

\subsection{When \texorpdfstring{$G= C_2$}{G=C\_2}}
By~\cite[Lemma~5.1]{HarronSnowden}, an elliptic curve given by
$y^2=x^3+Ax+B$ with $A,B\in\Z$ has a point of order $2$ if and only if there exist $a,b\in\Z$, such that $A=a$ and $B=b^3+ab$.
By~\cite[Lemma~5.2]{HarronSnowden}, $A \ll H^{\frac{1}{3}}$ and $B\ll H^{\frac{1}{2}}$, implies that $|a|\ll H^{\frac{1}{3}}$ and $|b|\ll H^{\frac{1}{6}}$.
The discriminant can be expressed as
\[\Disc(E_{A,B})=-16(4a^3+27(b^3+ab)^2)=-16 (a + 3 b^2)^2 (4 a + 3 b^2)\ll H.\]
Theorem~\ref{theorem:main2} allows us to assume that
\[\left|(a + 3 b^2) (4 a + 3 b^2)\right|< \rad(\Disc(E_{A,B}))^{1+\frac{1}{4}\epsilon}\ll N(E_{A,B})^{1+\frac{1}{4}\epsilon}.\]
We can assume by Lemma~\ref{lemma:rebinaryformarea} that
\[\left|(a + 3 b^2) (4 a + 3 b^2)\right|>H^{\frac{2}{3}-\frac{1}{4}\epsilon}.\]
Therefore
\[\Disc(E_{A,B})^{\frac{2}{3}-\frac{1}{4}\epsilon}\ll N(E_{A,B})^{1+\frac{1}{4}\epsilon}.\]
For large enough $H$, we have
\[|\Disc_{\min}(E_{A,B})|\leq |\Disc(E_{A,B})|\leq N(E_{A,B})^{\frac{3}{2}+\epsilon}.\] 

We can assume by Lemma~\ref{lemma:hompolygcd} that $\gcd(\rad(A),\rad(\Disc(E_{A,B})))<H^{\frac{1}{4}\epsilon}$, so 
\[H^{-\frac{1}{4}\epsilon}N(E_{A,B})\ll|(a + 3 b^2) (4 a + 3 b^2)|\ll H^{\frac{2}{3}}.\]
By Lemma~\ref{lemma:rebinaryformarea}, we can assume also that
\[|\Disc_{\min}(E_{A,B})|\geq H^{1-\frac{1}{4}\epsilon}.\]
Combining we have
\[|\Disc_{\min}(E_{A,B})|\geq N(E_{A,B})^{\frac{3}{2}-\epsilon}\]
for large enough $H$.

Collecting all the error terms we see that
\[\#\left\{E/\Q: E(\Q)_{\mathrm{tors}}\cong C_2,\ H(E)\leq H,\ \sigma(E)\notin \left(\frac{3}{2}-\epsilon,\frac{3}{2}+\epsilon\right)\right\}\ll H^{\frac{1}{2}}\frac{(\log\log H)^2}{(\log H)^{\frac{1}{2}}}.\]
Finally
\cite[Theorem~5.6]{HarronSnowden} gives a sufficient lower bound for the denominator
\[\#\left\{E/\Q: E(\Q)_{\mathrm{tors}}\cong C_2,\ H(E)\leq H\right\}\gg H^{\frac{1}{2}}.\]
This completes the proof of Theorem~\ref{theorem:SzpiroTorsheight} when $G= C_2$.

\begingroup\renewcommand\arraystretch{1.5} \tiny
\begin{longtable}{C{1cm}C{6.9cm}C{6.9cm}}
 \caption{The polynomials $f(t)$ and $g(t)$}
\label{ta:fgT}	\\
 \hline
 $G$ &$-3f(t)$ & $\frac{27}{2}g(t)$\\
 \hline \hline
 \endfirsthead
 \caption[]{\emph{continued}}\\
 \hline
 $G$ &$-3f(t)$ & $\frac{27}{2}g(t)$\\
 \hline \hline
 \endhead
 \hline
 \multicolumn{3}{r}{\emph{continued on next page}}
 \endfoot
 \hline
 \endlastfoot
$C_{3}$ & $ -24t+1 $ & $ 216t^2-36t+1$\\\hline
$C_{4}$ & $ 16t^{2}-16t+1 $ & $ ( 8t-1)(8t^{2}+16t-1)$\\\hline
$C_{5}$ & $ t^{4}+12t^{3}+14t^{2}-12t
+1 $ & $(t^{2}+1)(t^{4}
+18t^{3}+74t^{2}-18t+1)$ \\\hline
$C_{6}$ & $( t+3) ( t^{3}+9t^{2}+3t
+3) $& $(t^{2}+6t-3) (t^{4}
+12t^{3}+30t^{2}+36t+9)$ \\\hline
$C_{7}$ & $( t^{2}-t+1) ( t^{6}
+5t^{5}-10t^{4}-15t^{3}+30t^{2}-11t+1) $ & $(t^{12}+6t^{11}-15t^{10}-46t^{9}+174t^{8}-222t^{7}
+273t^{6}-486t^{5}+570t^{4}-354t^{3}+
117t^{2}-18t+1)$ \\\hline
$C_{8}$ & $ t^{8}-16t^{7}+96t^{6}-288t^{5}+480t^{4}-448t^{3}+224t^{2}-64t+16 $ & $
(t^{4}-8t^{3}+16t^{2}-16t+8)(t^{8}
-16t^{7}+96t^{6}-288t^{5}+456t^{4}-352t^{3}+
80t^{2}+32t-8)$\\\hline
$C_{9}$ & $( t^{3}-3t+1) ( t^{9}
-9t^{7}+27t^{6}-45t^{5}+54t^{4}-48t^{3}+27t^{2}-9t+1) $ & $
(t^{18}-18t^{16}+42t^{15}+27t^{14}-306t^{13}
+735t^{12}-1080t^{11}+1359t^{10}-
2032t^{9}+ 3240t^{8}- 4230t^{7}+4128t^{6}
- 2970t^5 + 1557t^4-570t^{3}+
135t^{2}-18t+1)
$ \\\hline
$C_{10}$ & $t^{12}-8t^{11}+16t^{10}+40t^{9}-240t^{8}+432t^{7}-256t^{6}-288t^{5}+720t^{4}-720t^{3}+416t^{2}-
128t+16$ & $(t^{2}-2t+2)(t^{4}-2t^{3}+2)
(t^{4}-2t^{3}-6t^{2}+12t-4)(t^{8}-6t^{7}
+4t^{6}+48t^{5}- 146t^{4}+176t^{3}-104t^{2}
+32t-4)$ 
\\\hline
$C_{12}$ & $( t^{4}-6t^{3}+12t^{2}-12t+6) (t^{12}
-18t^{11}+144t^{10}-684t^{9}+2154t^{8}-4728t^{7}+7368t^{6}-
8112t^{5}+6132t^{4}-3000t^{3}+864t^{2}-144t+24)$ & $(t^{8}-12t^{7}+60t^{6}-168t^{5}+288t^{4}-312t^{3}+216t^{2}-96t+24)(t^{16}-
24t^{15}+264t^{14}+ 8208t^{12}-27696t^{11}+70632t^{10}-138720t^{9}+211296t^{8}-
248688t^{7}+222552t^{6}- 146304t^{5}+65880t^{4}
-17136t^{3}+1008t^{2}+
576t-72)$ \\\hline
$C_{2}\times C_{4}$ & $t^{4}+16t^{3}+80t^{2}+128t+256$ & $(t^{2}+8t-16)(
t^{2}+8t+8) (t^{2}+8t+32)$\\\hline
$C_{2}\times C_{6}$ & $( 21t^{2}-6t+1) ( 6861t^{6}
-2178t^{5}-825t^{4}+180t^{3}+75t^{2}-18t+1) $& $(183t^{4}-36t^{3}-30t^{2}+12t-1)(
393t^{4}-156t^{3}+30t^{2}-12t+1)(759t^{4}-
228t^{3}-30t^{2}+ 12t-1)$
\\\hline
$C_{2}\times C_{8}$ & $t^{16}+32t^{15}+448t^{14}+3584t^{13}+17664t^{12}
+51200t^{11}+51200t^{10}-237568t^{9}- 1183744t^{8}-1900544t^{7}+3276800t^{6} +26214400t^{5}+
72351744t^{4}+117440512t^{3}+ 117440512t^{2}+67108864t+16777216$
& $(t^{8}+16t^{7}+96t^{6}+256t^{5}-256t^{4}
-4096t^{3}-12288t^{2}-16384t-8192)
( t^{8}+16t^{7}+ 96t^{6}+256t^{5}+128t^{4}-1024t^{3}-3072t^{2}-4096t-2048)(t^{8}+
16t^{7}+96t^{6}+ 256t^{5}+ 512t^{4}+2048t^{3}
+6144t^{2}+8192t+4096)$
\end{longtable}

\begin{longtable}{C{1cm}C{8cm}}
 \caption{The polynomials $d(t)$}
\label{ta:gamT}	
\\
 \hline
 $G$ & $-\frac{1}{2^8}d(t) $ \\
 \hline \hline
 \endfirsthead
 \caption[]{\emph{continued}}\\
 \hline
 $G$ & $-\frac{1}{2^8}d(t) $ \\
 \hline \hline
 \endhead
 \hline
 \multicolumn{2}{r}{\emph{continued on next page}}
 \endfoot
 \hline
 \endlastfoot
 $C_{3}$ & $t^{3}(-27t+1) $ \\\hline
 $C_{4}$ & $t^{4}(-16t+1) $ \\\hline
$C_{5}$ & $ t ^{5}( -t^{2}-11t+1)
$ \\\hline
$C_{6}$ & $t^{2}( t+9) ( t+1) ^{3}
$ \\\hline
$C_{7}$ & $ t ^{7}( -t+1) ^{7}(
t^{3}+5t^{2}-8t+1) $ \\\hline
$C_{8}$ & $t^{2}( t-2) ^{4}( t-1)
^{8}( t^{2}-8t+8) $ \\\hline
$C_{9}$ & $ t^{9}( -t+1) ^{9}(
t^{2}-t+1) ^{3}( t^{3}+3t^{2}-6t+1) $ \\\hline
$C_{10}$ & $t^{5}( t-2) ^{5}(
t-1) ^{10}( t^{2}+2t-4) ( t^{2}-3t+1) ^{2}$ \\\hline
$C_{12}$ & $t^{2}( t-2) ^{6}( t-1)
^{12}( t^{2}-6t+6) ( t^{2}-2t+2)
^{3}( t^{2}-3t+3) ^{4}$ \\\hline
$C_{2}\times C_{4}$ & $t^{2}( t+8)^{2}( t+4)^{4}$ \\\hline
$C_{2}\times C_{6}$ & $( 2t) ^{6}( -9t+1) ^{2}(
-3t+1) ^{2}( 3t+1) ^{2}( -5t+1) ^{6}(
-t+1) ^{6}$ \\\hline
$C_{2}\times C_{8}$ & $( 2t) ^{8}( t+2) ^{8}(
t+4) ^{8}( t^{2}-8)^{2}( t^{2}+8t+8
)^{2}( t^{2}+4t+8)^{4}$ 
\end{longtable}

\begin{longtable}{C{1cm}C{1cm}C{1cm}C{1cm}C{1cm}C{1cm}C{1cm}C{1cm}C{1cm}}
 \caption{Data for computing $\beta_G$}
\label{ta:alpT}	
\\
 \hline
 $G$ &$\deg f$ & $\deg g$ & $n$ & $m$ & $\frac{12n}{m+1}$ & $\mathbf{1}_{e_0\geq 1}$ & $\frac{1}{m}\sum_{i=1}^{\psi} \delta_i$ & $\beta_G$\\
 \hline \hline
 \endfirsthead
 \caption[]{\emph{continued}}\\
 \hline
 $G$ &$\deg f$ & $\deg g$ & $n$ & $m$ & $\frac{12n}{m+1}$ & $\mathbf{1}_{e_0\geq 1}$ & $\frac{1}{m}\sum_{i=1}^{\psi} \delta_i$ & $\beta_G$\\
 \hline \hline
 \endhead
 \hline
 \multicolumn{9}{r}{\emph{continued on next page}}
 \endfoot
 \hline
 \endlastfoot
$C_{3}$ 		& $1$ & $2$ & $1$ & $3$ & $3$ & $0$ & $2$ & $2$\\\hline
$C_{4}$ 		& $2$ & $3$ & $1$ & $2$ & $4$ & $1$ & $2$ & $\frac{12}{5}$\\\hline
$C_{5}$		& $4$ & $6$ & $1$ & $1$ & $6$ & $1$ & $3$ & $3$\\\hline
$C_{6}$ 		& $4$ & $6$ & $1$ & $1$ & $6$ & $1$ & $3$ & $3$\\\hline
$C_{7}$ 		& $8$ & $12$ & $2$ & $1$ & $12$ & $1$ & $5$ & $4$\\\hline
$C_{8}$ 		& $8$ & $12$ & $2$ & $1$ & $12$ & $1$ & $5$ & $4$\\\hline
$C_{9}$ 		& $12$ & $18$ & $3$ & $1$ & $18$ & $1$& $7$ & $\frac{9}{2}$\\\hline
$C_{10}$ 		& $12$ & $18$ & $3$ & $1$ & $18$ & $1$& $7$ & $\frac{9}{2}$\\\hline
$C_{12}$ 		& $16$ & $24$ & $4$ & $1$ & $24$ & $1$& $9$ & $\frac{24}{5}$\\\hline
$C_{2}\times C_{4}$ & $4$ & $6$ & $1$ & $1$ & $6$ & $1$& $3$ & $3$\\\hline
$C_{2}\times C_{6}$ & $8$ & $12$ & $2$ & $1$ & $12$ & $0$& $6$ & $4$\\\hline
$C_{2}\times C_{8}$ & $16$ & $24$ & $4$ & $1$ & $24$ & $1$& $9$ & $\frac{24}{5}$
\end{longtable}
\endgroup

\bibliographystyle{abbrv}

\end{document}